\pdfoutput=1
\documentclass[11pt,letterpaper,onecolumn]{article}
\usepackage{arxiv-setting}
\usepackage[letterpaper,margin=1in]{geometry}
\renewcommand{\epsilon}{\varepsilon}

 \makeatletter
 \define@key{todonotes}{ID}[]{     \setkeys{todonotes}{author=ID, inline, color=blue}}%
 \makeatother
 \makeatletter
 \define@key{todonotes}{MJ}[]{%
     \setkeys{todonotes}{author=MJ, inline, color=orange}}%
 \makeatother
 \makeatletter
 \define@key{todonotes}{HW}[]{%
     \setkeys{todonotes}{author=HW, inline, color=red!20!white}}%
 \makeatother

\newcommand{\toromanlower}[1]{\ifcase#1 \or i\or ii\or iii\or iv\or v\or vi\or vii\or viii\or ix\or x\fi}

\title{Majority Dynamics on Resampled Sparse Erd\H{o}s--R\'enyi Graphs:\\[0.5ex]
Gaussian Winner Selection and Pace to Unanimity.\blfootnote{Author names are listed in alphabetical order.}}
\author{
Ioana Dumitriu%
\thanks{\mbox{\scriptsize Department of Mathematics, University of California San Diego, La Jolla, CA 92093, USA; \texttt{idumitriu@ucsd.edu}.}}
\and
Muchen Ju\orcidlink{0009-0004-6131-6911}%
\thanks{\mbox{\scriptsize Department of Mathematics, University of Pennsylvania, Philadelphia, PA 19104, USA; \texttt{muchenju@sas.upenn.edu}.}}
\and
Hai-Xiao Wang\orcidlink{0000-0003-2730-1439}%
\thanks{\mbox{\scriptsize Department of Applied Mathematics, University of Washington, Seattle, WA 98195, USA; \texttt{haixwang@uw.edu}.}}
}

\date{This version: August 2026}

\begin{document}

\maketitle

\begin{abstract}
    We study the two-opinion majority dynamics process: at each time step, every vertex adopts the majority opinion among its neighbors, retaining its current opinion if there is a tie. Independently at each step, the interaction graph is resampled from the sparse Erd\H{o}s--R\'enyi model $\mathbb G(N,p)$ with $p=b\log N/N$ and fixed $b>1$.

    Our results identify three regimes governed by the initial advantage $\Delta_0=|\mathcal B_0|-|\mathcal R_0|$, where $|\mathcal B_0|$ and $|\mathcal R_0|$ denote the initial blue and red camps, respectively. First, an initial blue advantage above an explicit constant multiple of $N/\sqrt{\log N}$ leads to blue unanimity within two updates with high probability. Second, throughout the intermediate regime $\sqrt{N/\log N}\ll\Delta_0\lesssim N/\sqrt{\log N}$, we obtain explicit high-probability upper and lower bounds on the blue-unanimity time. Finally, uniformly in the critical window $\Delta_0\sqrt p=O(1)$, the blue- and red-unanimity probabilities equal $\Phi(\sqrt{2/\pi}\,\Delta_0\sqrt p)+o(1)$ and $\Phi(-\sqrt{2/\pi}\,\Delta_0\sqrt p)+o(1)$, respectively, and unanimity is reached within $(1+o(1))\log N/\log\log N$ many updates with high probability. This resolves the resampled version of the \emph{optimal power-of-few} conjecture raised by Tran and Vu~\cite{tran2025power}.
\end{abstract}

\noindent \textbf{Keywords:} Majority Dynamics; Erd\H{o}s--R\'enyi Graphs; Gaussian Approximation.

\newpage
\setcounter{tocdepth}{1}
\tableofcontents
\newpage


\section{Introduction}

\emph{Majority dynamics} is a two-opinion process used to model the evolution of opinions on networks \cite{mossel2017opinion}. Mathematically, let \(\gV=[N]\) denote the individuals in the network, and let \(\rvy_t\in\{\pm1\}^{\gV}\) record the opinions at day \(t\), where \(+1\) denotes blue and \(-1\) denotes red. The two opinion classes are
\begin{align}
\gB_t
&\coloneqq \{v\in\gV:\rvy_t(v)=+1\},
\qquad
\gR_t
\coloneqq \{v\in\gV:\rvy_t(v)=-1\}.
\label{eqn:blue-red-camps}
\end{align}
At day \(t+1\), the opinion of each vertex is determined by its neighbors
in an update graph
\[
\gG_{t+1}=(\gV,\gE_{t+1}).
\]
For each \(v\in\gV\), let \(N_t^{\gB}(v)\) and \(N_t^{\gR}(v)\) denote the number of blue and red neighbors of \(v\) in \(\gG_{t+1}\), respectively defined by
\begin{align}
N_t^{\gB}(v)
&\coloneqq
\#\{w\in\gB_t:\{v,w\}\in\gE_{t+1}\},
\qquad
N_t^{\gR}(v)
\coloneqq
\#\{w\in\gR_t:\{v,w\}\in\gE_{t+1}\}.
\label{eqn:neighbor-count}
\end{align}
All vertices are updated synchronously according to the following rule:
\begin{align}
\ervy_{t+1}(v)
=
\begin{cases}
+1, & N_t^{\gB}(v)>N_t^{\gR}(v),\\
-1, & N_t^{\gB}(v)<N_t^{\gR}(v),\\
\ervy_t(v), & N_t^{\gB}(v)=N_t^{\gR}(v).
\end{cases}
\label{eqn:opinion-update}
\end{align}
We say that blue (resp. red) \emph{unanimity} occurs at time \(T\) if \(\gR_T=\emptyset\) (resp. \(\gB_T=\emptyset\)).

A central problem is to understand how the initial opinion imbalance, $\Delta_0 = |\gB_0|-|\gR_0|$, together with the sequence of interaction graphs $\gG_t$, influences both the eventual winner and the time required to reach unanimity. Most of the existing literature focuses on the scenario where the update graph is sampled once at the beginning of the process and then held fixed for all updates, that is, $\gG_t = \gG_1$ for all $t \geq 1$. There, a common choice is to sample $\gG_0$ from the Erd\H{o}s--R\'enyi law $\mathbb{G}(N,p)$, under which each edge is independently present with probability $p$.

The recent breakthrough by Sah and Sawhney \cite{sah2024majority} proved the \emph{power-of-one} phenomenon, i.e., any nonzero initial advantage gives the initially larger camp a nontrivial advantage in eventual winning probability, uniformly in the size of the network. In the dense regime where $(\log N)^{-1/16}\leq p \leq 1 - (\log N)^{-1/16}$, they also calculated the asymptotically correct winning probability as a function of the initial advantage and $p$. In the sparse regime where $pN \geq (1 + \epsilon)\log N$ for some constant $\epsilon>0$, Tran and Vu \cite{tran2025power} proved, in their half-gap convention, that $p\Delta\geq10$ suffices for unanimity within \(O(\log_{Np}(N))\) updates; equivalently, in our full-gap convention, their condition is $p|\Delta_0|\geq20$. Readers may refer to Section~\ref{sec:related-literature} for a more thorough literature review in this direction.

Furthermore, Tran and Vu \cite{tran2023reaching, tran2025power} raised the question of determining the ``optimal power of few'': for a given density $p$, what is the minimal initial advantage required for blue unanimity to be achieved with probability arbitrarily close to $1$? They formulated the following conjecture.

\begin{conjecture}[Optimal power of few, {\cite[Conjecture 1.11]{tran2025power}}]
\label{conj:optimal-power-of-few}
There is a function $f$ satisfying $f(x)>1/2$ for $x>0$ and
$\lim_{x\to\infty}f(x)=1$ with the following property: majority dynamics on
$\mathbb{G}(N,p)$ with initial advantage $\Delta_0$ for blue and
$p\geq(1+c)\log(N)/N$ for some constant $c>0$ ends unanimously blue with
probability $f(\Delta_0\sqrt p)-o(1)$.
\end{conjecture}

When $p \geq (\log N)^{-1/16}$, a corollary of \cite{sah2024majority} verifies Conjecture~\ref{conj:optimal-power-of-few}. However, the conjecture for smaller values of $p$ remains unresolved.

In this paper, we focus on the case where the update graphs
$\{\gG_t\}_{t\geq1}$ are independent and identically distributed, with
$\gG_t\sim\mathbb{G}(N,p)$, and are independent of the fixed initial
configuration $\rvy_0$.
This removes the dependence between the current opinion configuration and the
graph used in subsequent updates, while retaining the fluctuations arising from
sparse random neighborhoods. Let
\begin{align}
\mathcal F_0&\coloneqq\sigma(\rvy_0),
&
\mathcal F_t&\coloneqq\sigma(\rvy_0,\gG_1,\ldots,\gG_t),
\qquad t\geq1,
\label{eqn:filtration-def}
\end{align}
denote the natural filtration generated by the initial configuration and the
resampled graphs through time $t$, where $\sigma(\cdot)$ denotes the generated
sigma-algebra. For a fixed initial configuration, the opinion process is a
time-homogeneous Markov chain. The law of the next camp sizes, or equivalently
the transition kernel up to relabeling of the vertices, depends on the current
configuration only through its camp sizes.

We work under the following critical sparse regime.
\begin{assumption}[Critical sparsity]
\label{ass:sparse-er}
For a fixed constant \(b>0\), set
\begin{align}
p\coloneqq\frac{d}{N},
\qquad
d=b\log N.
\label{eqn:er-edge-probability}
\end{align}
\end{assumption}

Note that isolated vertices constitute an immediate obstruction to unanimity when the update graph is static throughout the process. To avoid the discussion of isolated vertices, we impose the additional assumption to ensure that each resampled graph is connected with high probability.
\begin{assumption}[Connectivity]
\label{ass:connectivity}
In addition to Assumption~\ref{ass:sparse-er}, assume that \(b>1\).
\end{assumption}

Our main contributions for the majority dynamics on resampled Erd\H{o}s--R\'enyi graphs can be summarized as follows.
\begin{enumerate}
\item We establish an explicit threshold for rapid unanimity: if
\(\Delta_0\geq K N/\sqrt{\log N}\) for a fixed
\(K>K_2^{\mathrm{ER}}(b)\), then blue unanimity occurs within two updates
with high probability; see Theorem~\ref{thm:er-two-day-unanimity}.

\item In the intermediate initial advantage scale \(\sqrt{N/\log N}\ll \Delta_0 \lesssim N/\sqrt{\log N}\), where each successful update amplifies the advantage by a factor of order \(\sqrt{\log N}\), we determine the blue-unanimity time by giving lower and upper bounds together with an explicit failure probability; see Theorem~\ref{thm:er-polylogarithmic-days-to-unanimity} and Corollary~\ref{cor:er-polylogarithmic-days-to-unanimity-asymptotic}.

\item We resolve the resampled version of
Conjecture~\ref{conj:optimal-power-of-few} and establish
Theorem~\ref{thm:er-critical-window-winner-selection}: in the critical
window \(\Delta_0\sqrt p=O(1)\), the asymptotic blue-winning probability is
\(\Phi\bigl(\sqrt{2/\pi}\,\Delta_0\sqrt p\bigr)\), and the selected opinion
reaches unanimity within
\((1+o(1))\log N/\log\log N\) updates.
\end{enumerate}

\subsection{Main results}
We first define the majority advantage at time \(t\geq 0\) as
\begin{align}
\Delta_t\coloneqq |\gB_t|-|\gR_t|.
\label{eqn:delta-def}
\end{align}
We then have the following relationship between the camp sizes and the advantage:
\begin{align}
|\gB_t|=\frac{N+\Delta_t}{2},
\qquad
|\gR_t|=\frac{N-\Delta_t}{2}.
\label{eqn:camp-sizes-from-advantage-intro}
\end{align}

Our first result concerns the threshold scale \(N/\sqrt{\log N}\). An initial
blue advantage above an explicit constant multiple of this scale yields blue
unanimity within two updates. To state the threshold, we introduce the following
constants under Assumption~\ref{ass:connectivity}:
\begin{align}
r_*&\coloneqq\frac12\left(1-\frac{\sqrt{2b-1}}{b}\right),
\label{eqn:r-star}\\
K_2^{\mathrm{ER}}(b)
&\coloneqq
\frac{1}{\sqrt b}\Phi^{-1}(1-r_*),
\label{eqn:K2-def}
\end{align}
where \(\Phi\) is the cumulative distribution function of the standard normal distribution, and $\Phi^{-1}$ is the inverse cumulative distribution function. Since \(b>1\), the constant \(r_*\) is the unique solution in \((0,1/2)\) to the equation below:
\[
b\left(\sqrt{1-r}-\sqrt{r}\right)^2=1.
\]
\begin{theorem}[Two- and one-day unanimity]
\label{thm:er-two-day-unanimity}
Under Assumption~\ref{ass:connectivity}, the following statements hold for all
sufficiently large \(N\).
\begin{enumerate}[label=\textup{(\roman*)}]
\item \textup{(Two days.)} Fix \(K>K_2^{\mathrm{ER}}(b)\). If
\begin{align}
\Delta_0\ge K\frac{N}{\sqrt{\log N}},
\label{eqn:er-two-day-unanimity-condition}
\end{align}
then there exists \(\xi=\xi(b,K)>0\) such that
\[
\P(\gR_2=\emptyset\mid\rvy_0)
\ge
1-2\exp\left(-\left(\log N\right)^2\right)-2N^{-\xi}.
\]
\item \textup{(One day.)} Fix \(0<r<r_*\). If we further assume that
\begin{align}
\Delta_0\ge (1-2r)N,
\label{eqn:er-one-day-unanimity-condition}
\end{align}
then there exists \(\xi=\xi(b,r)>0\) such that
\[
\P(\gR_1=\emptyset\mid\rvy_0)
\ge
1-2N^{-\xi}.
\]
\end{enumerate}
\end{theorem}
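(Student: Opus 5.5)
Part (ii) comes first, since part (i) reduces to it. Condition on \(\rvy_0\) with \(|\gR_0|=rN\) for some \(r\le r_0<r_*\). In \(\gG_1\), each vertex \(v\) has \(N_0^{\gB}(v)\sim\mathrm{Bin}(|\gB_0\setminus\{v\}|,p)\) and \(N_0^{\gR}(v)\sim\mathrm{Bin}(|\gR_0\setminus\{v\}|,p)\), and these two counts are independent. Vertex \(v\) ends red only if \(N_0^{\gR}(v)\ge N_0^{\gB}(v)\); a tie keeps red vertices red, so we use the non-strict inequality. We bound this probability by the Chernoff/Bhattacharyya estimate: for \(\lambda>0\),
\[
\P\bigl(N_0^{\gR}(v)-N_0^{\gB}(v)\ge 0\bigr)\le \bigl(1-p+pe^{\lambda}\bigr)^{rN}\bigl(1-p+pe^{-\lambda}\bigr)^{(1-r)N}\le \exp\bigl(d\,[r(e^\lambda-1)+(1-r)(e^{-\lambda}-1)]\bigr).
\]
Taking \(e^{\lambda}=\sqrt{(1-r)/r}\) gives the bound \(\exp(-d(\sqrt{1-r}-\sqrt r)^2)\cdot e^{O(1)}\). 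The \(O(1)\) term accounts for the \(\pm1\) self-exclusion and the \(\log(1-p)\) corrections. Since \(d=b\log N\), this bound equals \(N^{-b(\sqrt{1-r}-\sqrt r)^2}\). The map \(r\mapsto b(\sqrt{1-r}-\sqrt r)^2\) is strictly decreasing on \((0,1/2)\) and equals \(1\) at \(r_*\). Hence for \(r\le r_0<r_*\) the exponent is \(1+2\xi\) for some \(\xi>0\). A union bound over the \(N\) vertices then gives \(\P(\gR_1\neq\emptyset\mid\rvy_0)\le N^{-2\xi}\cdot O(1)\le 2N^{-\xi}\). The same estimate holds for every \(r\le r_0\), so no monotonicity argument in \(\Delta_0\) is needed.

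For part (i), analyse the first day as a Gaussian step. Write \(\Delta_0=KN/\sqrt{\log N}\), assuming WLOG that equality holds, or else applying the argument uniformly in \(\Delta_0\). For a fixed vertex, \(N_0^{\gB}-N_0^{\gR}\) has mean \(p\Delta_0=Kb\sqrt{\log N}\) and variance \(\approx pN=b\log N\). A local CLT or Berry--Esseen bound for the difference of binomials, whose error is \(O(1/\sqrt{\log N})\), then gives
\[
\P(\ervy_1(v)=-1)=\P\bigl(\mathcal N(0,1)< -K\sqrt b\bigr)+o(1)=1-\Phi(K\sqrt b)+o(1).
\]
The tie probability is \(O(1/\sqrt{\log N})\) and can be absorbed. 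Because \(K>K_2^{\mathrm{ER}}(b)\), we have \(1-\Phi(K\sqrt b)<r_*\). Fix \(r_0\) strictly between these two values. Then \(\E|\gR_1|\le (1-\Phi(K\sqrt b)+o(1))N<r_0N\) for large \(N\).

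Next we need concentration of \(|\gR_1|\). The indicators \(\mathbf 1\{\ervy_1(v)=-1\}\) are not independent, because they share edges, but they are functions of independent edges. The most direct tool is McDiarmid's inequality over the vertex-exposure martingale, or equivalently an edge-by-edge argument. First intersect with the event that every degree in \(\gG_1\) is at most \(C\log N\); this event fails with probability \(\le e^{-(\log N)^2}\) if \(C\) is large, or use a typical bounded-differences version. Changing the edges at one vertex then alters at most \(O(\log N)\) indicators, so \(\P(|\gR_1|-\E|\gR_1|>\epsilon N)\le \exp(-c\epsilon^2N/\log^2N)\le e^{-(\log N)^2}\). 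Alternatively, write \(|\gR_1|\) as a sum and bound the variance, since pairs of vertices are nearly independent; but the bounded-differences route gives the stated \(\exp(-(\log N)^2)\) type error directly.

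On the good event, \(|\gR_1|\le r_0N\), i.e. \(\Delta_1\ge(1-2r_0)N\). The graph \(\gG_2\) is independent of \(\mathcal F_1\), so applying part (ii) conditionally on \(\rvy_1\) gives \(\P(\gR_2\ne\emptyset)\le 2e^{-(\log N)^2}+2N^{-\xi}\). I expect the main obstacle to be the concentration step, together with making the Gaussian approximation uniform enough that \(o(1)\) does not spoil the strict gap \(r_0<r_*\). The gap is a fixed constant, so a crude Berry--Esseen bound suffices.
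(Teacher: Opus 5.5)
Your architecture is the paper's. A Gaussian first step gives $\E[|\gR_1|\mid\rvy_0]\le(q_K+o(1))N$ with $q_K=\Phi(-\sqrt b K)<r_*$. Concentration then pushes $|\gR_1|$ below $r_0N$, and the one-day extinction bound is applied to the fresh graph $\gG_2$ conditionally on $\mathcal F_1$.

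Two of your tools differ from the paper's, and both are simpler.
\begin{itemize}
\item For (ii), the paper invokes a cited large-deviation corollary ($p_t^{\gB},q_t^{\gR}=N^{-I+o(1)}$, which requires linear camps). It therefore first reduces, by monotonicity, to the worst case $|\gR_t|=\lfloor rN\rfloor$. Your exponential-moment bound with $e^{\lambda}=\sqrt{(1-r)/r}$ gives the non-asymptotic estimate $e^{O(p)}N^{-b(\sqrt{1-r}-\sqrt r)^2}$ directly for every $1\le|\gR_0|\le rN$. There is no $o(1)$ in the exponent and no external citation. It is only an upper bound, but that is all the union bound uses.
\item For the Gaussian step, you apply Berry--Esseen to the degree difference viewed as a sum of $N-1$ independent centered Bernoulli terms. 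This gives an $O((\log N)^{-1/2})$ error uniformly in the camp sizes. It removes the paper's split into a small-red-camp case (handled there by the extinction lemma) and a linear-camp case.
\end{itemize}

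The concentration step, however, is wrong as written, in three ways.
\begin{itemize}
\item For fixed $C>b$, $\P(\deg(v)\ge C\log N)=N^{-b\,h(C/b)+o(1)}$ with $h(x)=x\log x-x+1$. So the bounded-degree event fails with polynomially small probability, not probability at most $e^{-(\log N)^2}$.
\item Plain McDiarmid needs worst-case bounded differences. Under vertex exposure, changing one vertex's edge set can change $\Theta(N)$ indicators, so the Lipschitz constant is not $O(\log N)$.
\item The edge-by-edge version is not equivalent. With $N(N-1)/2$ coordinates, each with constant $2$, it only gives $\exp(-\Theta(\epsilon^2))$ at deviation $\epsilon N$.
\end{itemize}

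The step is easy to repair in any of three ways.
\begin{itemize}
\item Raise the degree cap to $(\log N)^2$; the failure probability is then at most $N(eb/\log N)^{(\log N)^2}\le e^{-(\log N)^2}$. Combine this with a version of McDiarmid that only needs the Lipschitz property between bounded-degree graphs.
\item Accept a polynomial failure probability and absorb it into $2N^{-\xi}$ by shrinking $\xi$; this is allowed since $\xi$ is only asserted to exist. Even Chebyshev works: $|\mathrm{Cov}(\mathbf 1\{u\in\gR_1\},\mathbf 1\{v\in\gR_1\})|\le p$, so $\Var|\gR_1|=O(N\log N)$.
\item Follow the paper, which is cleanest. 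Each next-day color is a function of the edges at that vertex, so the indicators form a read-$2$ family. The Gavinsky--Lovett--Saks--Srinivasan bound then gives $2\exp(-s^2/N)$ with no truncation. Taking $s=\sqrt N\log N$ produces exactly the $2\exp(-(\log N)^2)$ term in the statement.
\end{itemize}
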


Our second result, Theorem~\ref{thm:er-polylogarithmic-days-to-unanimity}, concerns the positive intermediate initial advantage scale \(\sqrt{N/\log N}\ll\Delta_0 \lesssim N/\sqrt{\log N}\), where each successful update amplifies the
advantage by a factor of order \(\sqrt{\log N}\), as depicted by Lemma~\ref{lem:stepwise-advantage-amplification}. Once \(\Delta_t \geq KN/\sqrt{\log N}\) for a sufficiently large $K$, only two further updates are needed for blue unanimity, as stated in Theorem~\ref{thm:er-two-day-unanimity}. The corresponding conclusion for a negative advantage follows by exchanging blue and red.

To state the result, we define the blue-unanimity hitting time as
\begin{align}
\tau_{\mathrm B}
\coloneqq
\inf\{t\geq0:\gR_t=\emptyset\},
\label{eqn:er-blue-unanimity-time}
\end{align}
with the convention \(\inf\emptyset=\infty\). Fix
\(K>K_2^{\mathrm{ER}}(b)\), and let
\(c_1=c_1(b,K)>0\), \(C_2=C_2(b,K)>0\), and \(c=c(b,K)>0\) be the
constants supplied by Lemma~\ref{lem:stepwise-advantage-amplification} with
upper-window coefficient \(K\). Let \(\xi=\xi(b,K)>0\) be the constant in
Theorem~\ref{thm:er-two-day-unanimity}. For any \(s\geq0\), define
\begin{align}
\eta_N(s)
\coloneqq
8\exp\left(-c\frac{s^2\log N}{N}\right)
+2\exp\left(-(\log N)^2\right)
+2N^{-\xi}.
\label{eqn:er-polylog-joint-failure-probability}
\end{align}
For \(\Delta_0>0\), define the deterministic horizons
\begin{subequations}
\begin{align}
\underline{T}
&\coloneqq
\max\left\{
0,
\left\lfloor
\frac{\log\left(KN/(\Delta_0\sqrt{\log N})\right)}
{\log\left(C_2\sqrt{\log N}\right)}
\right\rfloor
\right\},
\label{eqn:er-polylog-necessary-time}\\
\overline{T}
&\coloneqq
\max\left\{
0,
\left\lceil
\frac{\log\left(KN/(\Delta_0\sqrt{\log N})\right)}
{\log\left(c_1\sqrt{\log N}\right)}
\right\rceil
\right\}.
\label{eqn:er-polylog-sufficient-time}
\end{align}
\end{subequations}

\begin{theorem}[Logarithmic time to blue unanimity]
\label{thm:er-polylogarithmic-days-to-unanimity}
Under Assumption~\ref{ass:connectivity}, suppose that
\(\gR_0\neq\emptyset\), \(1\leq h_N=o(\sqrt{\log N})\), and
\begin{align}
\Delta_0\geq \frac{\sqrt N}{h_N}.
\label{eqn:er-polylog-initial-advantage-condition}
\end{align}
Then, for all sufficiently large \(N\),
\begin{align}
\P\left(
\underline{T}+1
\leq\tau_{\mathrm B}\leq
\overline{T}+2
\,\middle|\,
\rvy_0
\right)
\geq
1-\eta_N(\Delta_0).
\label{eqn:er-polylog-unanimity-time-bracket}
\end{align}
\end{theorem}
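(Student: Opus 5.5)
We will prove the bracket by iterating Lemma~\ref{lem:stepwise-advantage-amplification} up to the upper window $KN/\sqrt{\log N}$ and then closing with Theorem~\ref{thm:er-two-day-unanimity}(i). We take the lemma to say the following. Whenever $\sqrt N/h_N\le\Delta_t\le KN/\sqrt{\log N}$, then conditionally on $\mathcal F_t$, with probability at least $1-O(\exp(-c\Delta_t^2\log N/N))$,
\[
c_1\sqrt{\log N}\,\Delta_t\le\Delta_{t+1}\le C_2\sqrt{\log N}\,\Delta_t .
\]
The lower bound is only needed while $\Delta_{t+1}$ stays below the window; once the window is exceeded we only use monotonicity. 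Define the good event $\mathcal E$ as the event that every step taken while inside the window satisfies these two-sided bounds. On $\mathcal E$, induction gives
\[
\Delta_0(c_1\sqrt{\log N})^t\le\Delta_t\le\Delta_0(C_2\sqrt{\log N})^t
\]
as long as we remain in the window.

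\emph{Upper bound on $\tau_{\mathrm B}$.} Let $\sigma=\inf\{t:\Delta_t\ge KN/\sqrt{\log N}\}$. The definition of $\overline T$ ensures $\Delta_0(c_1\sqrt{\log N})^{\overline T}\ge KN/\sqrt{\log N}$, so on $\mathcal E$ we get $\sigma\le\overline T$. We then apply Theorem~\ref{thm:er-two-day-unanimity}(i) conditionally on $\mathcal F_\sigma$. This uses the strong Markov property: the graphs $\gG_{\sigma+1},\gG_{\sigma+2}$ are fresh and independent of $\mathcal F_\sigma$, and the kernel depends only on camp sizes. Hence blue unanimity holds by time $\sigma+2\le\overline T+2$, except with probability $2e^{-(\log N)^2}+2N^{-\xi}$. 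Once $\gR=\emptyset$ it stays empty, so $\tau_{\mathrm B}\le\overline T+2$.

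\emph{Lower bound on $\tau_{\mathrm B}$.} For $t\le\underline T$, the definition of $\underline T$ gives
\[
\Delta_0(C_2\sqrt{\log N})^{t-1}\le KN/\sqrt{\log N}
\]
when $t-1<\underline T$. Hence, on $\mathcal E$,
\[
\Delta_t\le C_2\sqrt{\log N}\cdot KN/\sqrt{\log N}=C_2KN<N
\]
only after we also arrange $C_2K<1$. That may fail, so the cleaner route is to observe that the lemma's upper amplification gives $\Delta_t<N$ for all $t\le\underline T$. This holds because $\Delta_{t-1}$ is still inside the window and the lemma's one-step upper bound keeps $|\gR_t|>0$. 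We therefore expect the lemma to be stated so that, from inside the window, one step cannot produce unanimity. This is plausible because red unanimity-free steps from advantage $\le KN/\sqrt{\log N}$ leave $\Theta(N^{1-o(1)})$ red vertices, by the Gaussian tail estimate behind $r_*$. Together with $\gR_0\neq\emptyset$ for the case $\underline T=0$, this gives $\tau_{\mathrm B}\ge\underline T+1$.

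\emph{Failure probability.} The probability of $\mathcal E^c$ is bounded by a union bound over the steps. The step-$t$ failure is at most $2\exp(-c\Delta_t^2\log N/N)$. On $\mathcal E$ the advantages grow geometrically, since $\Delta_t\ge\Delta_0(c_1\sqrt{\log N})^t$. The sum is therefore dominated by a geometric series in the exponent, giving at most $8\exp(-c\Delta_0^2\log N/N)$ for large $N$. Adding the closing failure probability gives $\eta_N(\Delta_0)$.

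We expect the main obstacle to be making the one-step bounds of the lemma hold uniformly and stopping-time-adaptively. The bounds must be applied conditionally on $\mathcal F_t$ at random levels $\Delta_t$, including the last step that may overshoot the window, and the Chernoff-type exponents must sum to the stated constant $8$. The hypothesis $h_N=o(\sqrt{\log N})$ is exactly what makes $\Delta_0^2\log N/N\to\infty$, so that the first step's failure probability is $o(1)$.
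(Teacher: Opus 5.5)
Your upper bound follows the paper's argument and is fine: the entrance time into $\{\Delta_t\ge KN/\sqrt{\log N}\}$, lower amplification while below it, a first-failure union bound with exponents growing like $(c_1\sqrt{\log N})^{2t}$, then the strong Markov property and Theorem~\ref{thm:er-two-day-unanimity}(i). The gap is in the lower bound $\tau_{\mathrm B}\ge\underline{T}+1$, where you read the definition of $\underline{T}$ off by one. Because $\underline{T}$ is a floor, it gives directly
\[
\bigl(C_2\sqrt{\log N}\bigr)^{\underline{T}}\Delta_0\le\frac{KN}{\sqrt{\log N}},
\]
and not only the version with exponent $\underline{T}-1$. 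Having used the weaker inequality, you were left with $\Delta_{\underline{T}}\le C_2KN$. You then fell back on the claim that the lemma's upper bound keeps the red camp nonempty after any step taken from inside the window. Lemma~\ref{lem:stepwise-advantage-amplification} gives no such thing: its upper bound $C_2\sqrt{\log N}\,\Delta_t$ equals $C_2KN$ at the top of the window, and nothing forces $C_2K<1$. Your Gaussian-tail heuristic would need its own concentration argument and error term to become a proof. As written, this step rests on an unproved assertion.

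No such assertion is needed. On the event that the two-sided bound holds at each step $0,\dots,\underline{T}-1$, induction gives, for every $t\le\underline{T}$,
\[
\Delta_0\le\bigl(c_1\sqrt{\log N}\bigr)^{t}\Delta_0\le\Delta_t\le\bigl(C_2\sqrt{\log N}\bigr)^{t}\Delta_0\le\frac{KN}{\sqrt{\log N}}.
\]
This shows at once that the lemma applies at every step $t<\underline{T}$ and that $\Delta_{\underline{T}}<N$ for large $N$. Hence $\gR_{\underline{T}}\neq\emptyset$, and since the all-blue state is absorbing, $\tau_{\mathrm B}\ge\underline{T}+1$; the case $\underline{T}=0$ is covered by $\gR_0\neq\emptyset$. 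This is how the paper argues, at a cost of $4\exp(-c\Delta_0^2\log N/N)$ from the same first-failure union bound. With this correction your proof goes through. When you write that union bound, bound for each $t$ the probability that step $t$ is the first failure. Use $\Delta_t\ge(c_1\sqrt{\log N})^t\Delta_0$ on the event that steps $0,\dots,t-1$ succeeded, rather than appealing to geometric growth ``on $\mathcal E$'' while estimating the probability of its complement.
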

\begin{corollary}\label{cor:er-polylogarithmic-days-to-unanimity-asymptotic}
Under the conditions of
Theorem~\ref{thm:er-polylogarithmic-days-to-unanimity}, we further suppose that
\[
\Delta_0\asymp \frac{\sqrt N}{h_N}.
\]
Then for every fixed \(0<\delta<1\) and all sufficiently large
\(N\),
\begin{align}
&\P\left(
\gR_{\left\lfloor(1-\delta)\frac{\log N}{\log\log N}\right\rfloor}
\neq\emptyset,\
\gR_{\left\lceil(1+\delta)\frac{\log N}{\log\log N}\right\rceil}
=\emptyset
\,\middle|\,
\rvy_0
\right)
\geq
1-\eta_N(\Delta_0).
\label{eqn:er-polylog-unanimity-time-asymptotics}
\end{align}
\end{corollary}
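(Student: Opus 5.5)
The corollary follows from Theorem~\ref{thm:er-polylogarithmic-days-to-unanimity} by evaluating the horizons $\underline{T}$ and $\overline{T}$ under $\Delta_0\asymp\sqrt N/h_N$. We will show that, for fixed $\delta\in(0,1)$ and all large $N$,
\[
\left\lfloor(1-\delta)\tfrac{\log N}{\log\log N}\right\rfloor\le \underline{T}
\qquad\text{and}\qquad
\overline{T}+2\le \left\lceil(1+\delta)\tfrac{\log N}{\log\log N}\right\rceil .
\]
On the event in \eqref{eqn:er-polylog-unanimity-time-bracket} we have $\tau_{\mathrm B}\ge\underline{T}+1$, so $\gR_t\neq\emptyset$ for every $t\le\underline{T}$. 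We also have $\tau_{\mathrm B}\le\overline{T}+2$, so $\gR_{\overline{T}+2}=\emptyset$. Blue unanimity is absorbing: once $\gR_t=\emptyset$, every vertex sees only blue neighbours or none, and in the latter case the tie rule keeps it blue. Hence $\gR_t=\emptyset$ for all $t\ge\tau_{\mathrm B}$. The event in \eqref{eqn:er-polylog-unanimity-time-asymptotics} therefore contains the event of the theorem, and the probability bound $1-\eta_N(\Delta_0)$ transfers unchanged.

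The numerator in both horizons is $L_N\coloneqq\log\bigl(KN/(\Delta_0\sqrt{\log N})\bigr)$. Since $\Delta_0\asymp\sqrt N/h_N$ with $1\le h_N=o(\sqrt{\log N})$, we have
\[
\frac{KN}{\Delta_0\sqrt{\log N}}\asymp \frac{\sqrt N\,h_N}{\sqrt{\log N}},
\]
and therefore
\[
L_N=\tfrac12\log N+\log h_N-\tfrac12\log\log N+O(1)=\tfrac12\log N\,(1+o(1)).
\]
Here we use $0\le\log h_N\le\tfrac12\log\log N$. Both denominators equal $\tfrac12\log\log N+O(1)=\tfrac12\log\log N\,(1+o(1))$, because $c_1$ and $C_2$ are fixed constants depending only on $(b,K)$. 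Consequently $L_N\to\infty$, so the maxima with $0$ are inactive, and
\[
\underline{T}=\frac{\log N}{\log\log N}(1+o(1))-O(1),
\qquad
\overline{T}=\frac{\log N}{\log\log N}(1+o(1))+O(1).
\]
Since $\log N/\log\log N\to\infty$, the $o(1)$ relative errors and the additive $O(1)$ terms (from the floor, the ceiling and the $+2$) are absorbed by the margin $\delta\log N/\log\log N$ for $N$ large. This gives both displayed inequalities.

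The only point that needs any care is that the $o(1)$ terms are uniform. They depend only on the implicit constants in $\Delta_0\asymp\sqrt N/h_N$, on $K$, and on $c_1,C_2$, all of which are fixed, together with the fact that $h_N=o(\sqrt{\log N})$. So "sufficiently large $N$" depends only on $\delta$, $b$, $K$ and these constants. Nothing further is required; in particular, no new probabilistic estimate is needed, and $\eta_N(\Delta_0)$ is inherited verbatim from the theorem.
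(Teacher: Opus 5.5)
Your proposal is correct and follows essentially the same route as the paper: you show $\underline{T}$ and $\overline{T}$ are both $(1+o(1))\log N/\log\log N$, deduce $\lfloor(1-\delta)\log N/\log\log N\rfloor\le\underline{T}$ and $\overline{T}+2\le\lceil(1+\delta)\log N/\log\log N\rceil$, and then transfer the bracket event of Theorem~\ref{thm:er-polylogarithmic-days-to-unanimity} using that the all-blue state is absorbing. Your explicit justification of absorption via the tie rule, and your remark on the uniformity of the $o(1)$ terms, are small additions to the paper's otherwise identical argument.
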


Our third result, Theorem~\ref{thm:er-critical-window-winner-selection}, concerns the critical scale \(\Delta_0=O(\sqrt{N/\log N})\), where the normalized one-step
advantage \(\Delta_1/\sqrt N\) has a nondegenerate Gaussian limit; see Lemma~\ref{lem:critical-window-one-step-clt}. Consequently, the first update selects a sign according to a Gaussian crossover. Outside a vanishing window around zero, the selected advantage then enters the amplification regime of Theorem~\ref{thm:er-polylogarithmic-days-to-unanimity}.

To state the result, retain the fixed \(K\) and associated constants introduced above, and let \(h_N\) satisfy the following conditions:
\[
h_N\to\infty
\quad\text{and}\quad
h_N=o(\sqrt{\log N}).
\]
For \(C_0>0\), the total error, combining the one-step Gaussian
approximation, the probability of the vanishing central window, and the
subsequent amplification failure probability, is
\begin{align}
\mathfrak r_N(C_0)
&\coloneqq
\rho_N(C_0)
+\frac1{h_N}
+\eta_N\left(\frac{\sqrt N}{h_N}\right),
\label{eq:critical-window-total-error}
\end{align}
where \(\rho_N(C_0)\) will be defined in Eq.~\eqref{eq:critical-window-one-step-clt}. The corresponding worst-case amplification horizon from the threshold \(\sqrt N/h_N\) is
\begin{align}
T_{N}
&\coloneqq
\max\left\{
0,
\left\lceil
\frac{\log\left(Kh_N\sqrt{N/\log N}\right)}
{\log\left(c_1\sqrt{\log N}\right)}
\right\rceil
\right\}.
\label{eq:critical-window-amplification-horizon}
\end{align}

\begin{theorem}[Critical-window winner selection]
\label{thm:er-critical-window-winner-selection}
Under Assumption~\ref{ass:connectivity}, retain the preceding choices of
\(K\) and \(h_N\), and fix \(C_0>0\). Consider deterministic initial
configurations satisfying
\begin{align}
x_0
\coloneqq
\Delta_0\sqrt{\frac{2b\log N}{\pi N}},
\qquad
|x_0|\le C_0.
\label{eq:critical-window-condition}
\end{align}
Uniformly over all such configurations, the blue- and red-unanimity
probabilities at time \(T_{N}+3\) satisfy
\begin{subequations}
\begin{align}
\P\left(
\gR_{T_{N}+3}=\emptyset
\,\middle|\,
\rvy_0
\right)
&=
\Phi(x_0)+O\left(\mathfrak r_N(C_0)\right),
\label{eq:critical-window-blue-winning-probability}\\
\P\left(
\gB_{T_{N}+3}=\emptyset
\,\middle|\,
\rvy_0
\right)
&=
\Phi(-x_0)+O\left(\mathfrak r_N(C_0)\right).
\label{eq:critical-window-red-winning-probability}
\end{align}
\end{subequations}
In particular, the approximation error vanishes,
\(\mathfrak r_N(C_0)=o(1)\), and the time horizon satisfies
\(T_{N}+3=(1+o(1))\log N/\log\log N\).
\end{theorem}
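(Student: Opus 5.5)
The proof conditions on the first update and then runs the amplification machinery from whatever advantage $\Delta_1$ the first day produces. The key input is Lemma~\ref{lem:critical-window-one-step-clt}. In the form we need it, it says that uniformly over $|x_0|\le C_0$,
\[
\sup_{u\in\mathbb R}\Bigl|\P\bigl(\Delta_1\le u\sqrt N\mid\rvy_0\bigr)-\Phi\bigl(u-x_0\bigr)\Bigr|\le\rho_N(C_0).
\]
The mean shift comes from the fact that a vertex with $\approx d$ neighbours has drift $\Delta_0 p$ in $N^{\gB}_0-N^{\gR}_0$ against a standard deviation $\sqrt d$. This gives $\P(\text{blue})-\tfrac12\approx \Delta_0 p/\sqrt{2\pi d}$, so $\E\Delta_1\approx 2N\cdot\Delta_0p/\sqrt{2\pi d}=\Delta_0\sqrt{2d/\pi}=x_0\sqrt N$. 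The variance is $\approx N$ because the $N$ vertex decisions are nearly independent Bernoulli$(1/2)$ variables. If the lemma is normalised differently, I would only rescale accordingly.

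\textbf{Step 1 (sign selection).} Partition on $\Delta_1$ into three events:
\[
E_+=\{\Delta_1\ge\sqrt N/h_N\},\qquad E_-=\{\Delta_1\le-\sqrt N/h_N\},\qquad E_0=\text{the rest}.
\]
By the CLT bound and the Lipschitz continuity of $\Phi$,
\[
\P(E_0\mid\rvy_0)\le 2\rho_N(C_0)+\frac{2}{\sqrt{2\pi}\,h_N}=O\bigl(\rho_N(C_0)+1/h_N\bigr),
\]
and
\[
\P(E_+\mid\rvy_0)=1-\Phi(-x_0)+O\bigl(\rho_N+1/h_N\bigr)=\Phi(x_0)+O\bigl(\rho_N+1/h_N\bigr).
\]

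\textbf{Step 2 (amplification via the Markov property).} Given $\mathcal F_1$ on $E_+$, the process from time $1$ onward is majority dynamics with fresh i.i.d.\ graphs and initial advantage $\Delta_1\ge\sqrt N/h_N$. If $\gR_1=\emptyset$, we are already done, since unanimity is absorbing: an empty camp stays empty. Otherwise Theorem~\ref{thm:er-polylogarithmic-days-to-unanimity} applies with this $h_N$, which satisfies $h_N\ge1$ for large $N$ and $h_N=o(\sqrt{\log N})$. It gives blue unanimity by time $1+\overline T(\Delta_1)+2$ with conditional probability at least $1-\eta_N(\Delta_1)$.

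Two monotonicities finish this step.
\begin{itemize}
\item $\overline T(\Delta_1)\le T_N$, because $\overline T$ is nonincreasing in $\Delta$ and equals $T_N$ at $\Delta=\sqrt N/h_N$. Since unanimity is absorbing, $\gR_{T_N+3}=\emptyset$.
\item $\eta_N(\Delta_1)\le\eta_N(\sqrt N/h_N)$, since $\eta_N$ is decreasing in $s$.
\end{itemize}
Symmetrically, on $E_-$ red wins by $T_N+3$ with failure at most $\eta_N(\sqrt N/h_N)$. Blue and red unanimity are disjoint events. Hence
\[
\P(\gR_{T_N+3}=\emptyset\mid\rvy_0)=\P(E_+)+O\bigl(\P(E_0)+\eta_N\bigr)=\Phi(x_0)+O(\mathfrak r_N),
\]
and the same argument gives the red statement. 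All bounds depend on $\rvy_0$ only through $|x_0|\le C_0$, which gives uniformity.

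\textbf{Step 3 (asymptotics).} The remaining claims are as follows.
\begin{itemize}
\item $1/h_N\to0$, and $\rho_N(C_0)\to0$ by the lemma.
\item $\eta_N(\sqrt N/h_N)=8e^{-c\log N/h_N^2}+o(1)\to0$, because $h_N=o(\sqrt{\log N})$ makes $\log N/h_N^2\to\infty$.
\item The horizon satisfies
\[
T_N=\frac{\tfrac12\log N+O(\log\log N)+\log h_N}{\tfrac12\log\log N+O(1)}=(1+o(1))\frac{\log N}{\log\log N},
\]
using $\log h_N=O(\log\log N)$.
\end{itemize}

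\textbf{Main obstacle.} The delicate point is Step 1, specifically matching the CLT normalisation so that the crossover is exactly $\Phi(x_0)$ with $x_0=\Delta_0\sqrt{2b\log N/(\pi N)}$, uniformly in the window. Since that is delegated to Lemma~\ref{lem:critical-window-one-step-clt}, the remaining care is only in justifying the application of Theorem~\ref{thm:er-polylogarithmic-days-to-unanimity} conditionally on the random, $\mathcal F_1$-measurable $\Delta_1$. This works because the theorem's bound holds for every deterministic initial configuration meeting its hypotheses, and the graphs $\gG_2,\gG_3,\dots$ are independent of $\mathcal F_1$.
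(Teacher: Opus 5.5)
Your proposal is correct and follows essentially the same route as the paper's proof. The shared steps are: the three-way partition of $\Delta_1$ at $\pm\sqrt N/h_N$, the bounds on $\P(E_0\mid\rvy_0)$ and $\P(E_\pm\mid\rvy_0)$ via $\rho_N(C_0)$ and the Lipschitz continuity of $\Phi$, the time-shifted conditional application of Theorem~\ref{thm:er-polylogarithmic-days-to-unanimity} using that $\overline T$ and $\eta_N$ are nonincreasing, disjointness of the two unanimity events for the upper bound, and the same horizon asymptotics. The only detail the paper states explicitly that you leave implicit is that the monotonicity of $\overline T$ in the initial advantage uses $c_1\sqrt{\log N}>1$, which holds for large $N$.
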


We use the phase diagram in Figure~\ref{fig:er-phase-diagram} to illustrate the three regimes characterized by Theorems~\ref{thm:er-two-day-unanimity},~\ref{thm:er-polylogarithmic-days-to-unanimity}, and~\ref{thm:er-critical-window-winner-selection}.

\begin{figure}[H]
\centering
\begin{tikzpicture}[
    x=1cm,
    y=1cm,
    axis/.style={-{Latex[length=4mm,width=3mm]},thick},
    tick/.style={black,line width=0.8pt},
    thresholdarrow/.style={
        -{Latex[length=1.5mm,width=1.2mm]},
        black!70,
        line width=0.65pt
    },
    regime/.style={line width=4pt},
    critical/.style={regime,gray!85},
    polylog/.style={regime,green!55!black},
    2day/.style={regime,orange!85!black},
    1day/.style={regime,cyan!80!blue},
    threshold/.style={font=\scriptsize,align=center,anchor=south}
]

\def\xA{0.0}
\def\xB{2.8}
\def\xC{8.0}
\def\xD{11.8}
\def\xE{13.8}
\def\yThreshold{0.72}

\draw[axis] (\xA,0) -- (\xE+0.4,0);
\node[font=\small,anchor=west] at (\xE+0.5,0) {$\Delta_0/N$};

\draw[critical] (\xA,0) -- (\xB,0);
\draw[polylog] (\xB,0) -- (\xC,0);
\draw[2day] (\xC,0) -- (\xD,0);
\draw[1day] (\xD,0) -- (\xE,0);

\foreach \x in {\xB,\xC,\xD} {
    \draw[tick] (\x,0.2) -- (\x,-0.2);
    \draw[thresholdarrow] (\x,0.24) -- (\x,0.62);
}

\node[threshold] at (\xB,\yThreshold)
    {$\displaystyle \frac{\Delta_0}{N}
      \asymp\frac{1}{\sqrt{N\log N}}$};
\node[threshold] at (\xC,\yThreshold)
    {$\displaystyle \frac{\Delta_0}{N}
      =\frac{K_2^{\mathrm{ER}}(b)}{\sqrt{\log N}}$};
\node[threshold] at (\xD,\yThreshold)
    {$\displaystyle \frac{\Delta_0}{N}=1-2r_*$};

\begin{scope}[shift={(0,-1.55)}]
    \draw[1day] (0.0,0.25) -- (0.7,0.25);
    \node[font=\scriptsize,anchor=west] at (0.85,0.25)
        {Thm.~\ref{thm:er-two-day-unanimity}\textup{(ii)}: \(1\) day};

    \draw[2day] (7.1,0.25) -- (7.8,0.25);
    \node[font=\scriptsize,anchor=west] at (7.95,0.25)
        {Thm.~\ref{thm:er-two-day-unanimity}\textup{(i)}: \(2\) days};

    \draw[polylog] (0.0,-0.45) -- (0.7,-0.45);
    \node[font=\scriptsize,anchor=west] at (0.85,-0.45)
        {Thm.~\ref{thm:er-polylogarithmic-days-to-unanimity}: polylogarithmic};

    \draw[critical] (7.1,-0.45) -- (7.8,-0.45);
    \node[font=\scriptsize,anchor=west] at (7.95,-0.45)
        {Thm.~\ref{thm:er-critical-window-winner-selection}: critical selection};
\end{scope}
\end{tikzpicture}
\caption{Schematic phase diagram for the daily-resampled ER model, shown for
\(\Delta_0\ge0\); the case \(\Delta_0<0\) follows by exchanging blue and red.
The horizontal lengths are illustrative only. The three markers are the
critical Gaussian scale, the limiting two-day threshold, and the limiting
one-day threshold, respectively. The theorem statements use the strict
fixed-margin conditions \(K>K_2^{\mathrm{ER}}(b)\) and \(r<r_*\).}
\label{fig:er-phase-diagram}
\end{figure}

\subsection{Numerical experiments}

We simulate majority dynamics with daily update graphs sampled from
\(\mathbb{G}(N,p)\) to illustrate the constant-day unanimity regime, the
stepwise amplification mechanism, and the critical-window selection mechanism.
\paragraph{Constant-time transition.}
In Figure~\ref{fig:er-short-time-experiment}, we plot the empirical probability of blue unanimity by days \(t=1\) and \(t=2\), together with the blue-unanimity hitting time, against the normalized initial advantage
\begin{align}
H=\frac{\Delta_0\sqrt{\log N}}{N}. \label{eqn:normalized-initial-advantage}
\end{align}
For fixed \(H\), increasing \(b\) shifts the transition toward smaller initial advantages. We also observe that the transition by time \(t=2\) occurs at a smaller value of \(H\) than the transition by time \(t=1\). This is consistent with Theorem~\ref{thm:er-two-day-unanimity} where the first update amplifies the initial advantage before the second update completes unanimity. The blue-unanimity hitting time is plotted in the figure as well, showing that the process reaches unanimity within a constant number of updates when \(\Delta_0 \gtrsim N/\sqrt{\log N}\).
\begin{figure}[H]
\centering
\begin{subfigure}[t]{0.32\textwidth}
\centering
\includegraphics[width=\textwidth]{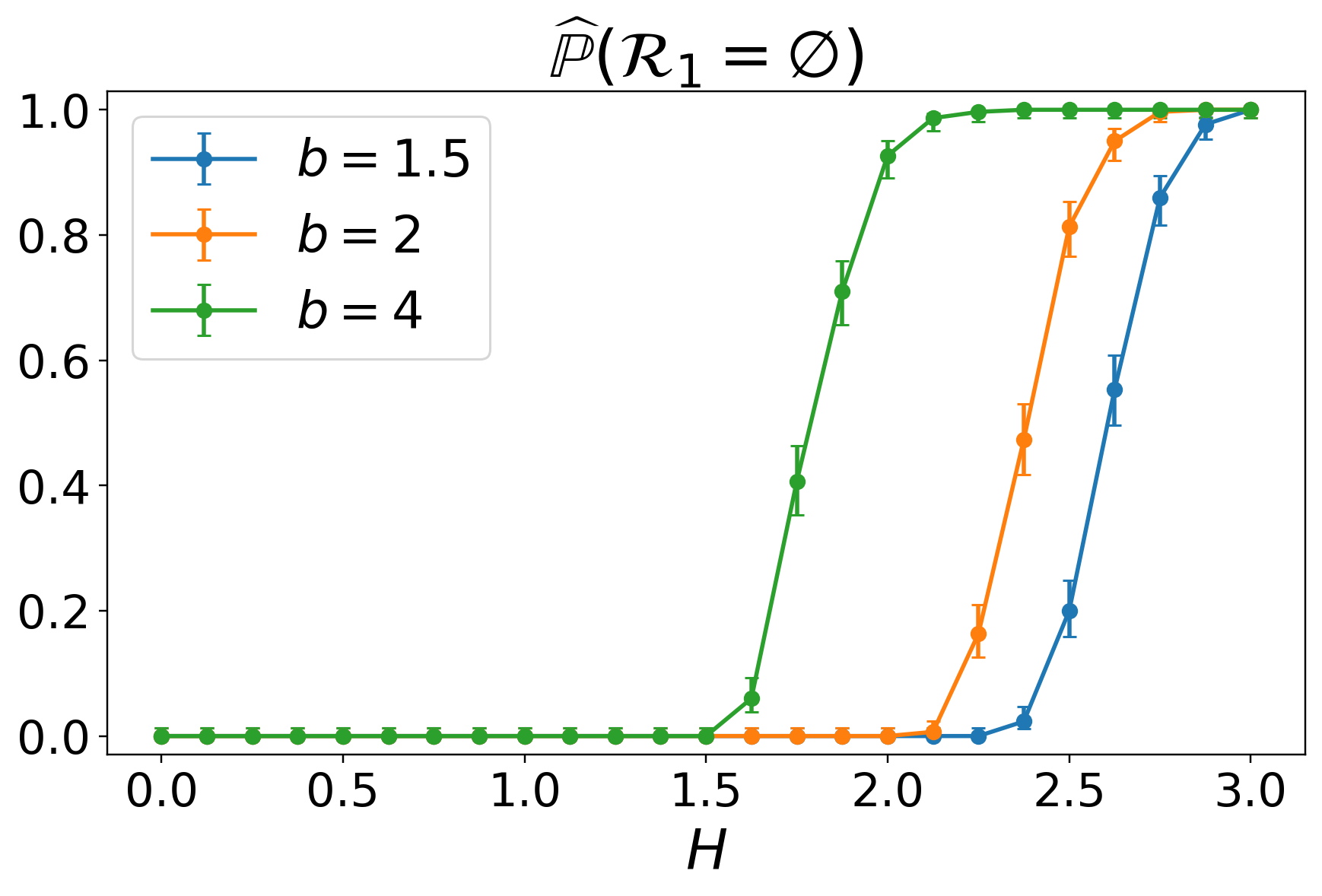}
\end{subfigure}
\hfill
\begin{subfigure}[t]{0.32\textwidth}
\centering
\includegraphics[width=\textwidth]{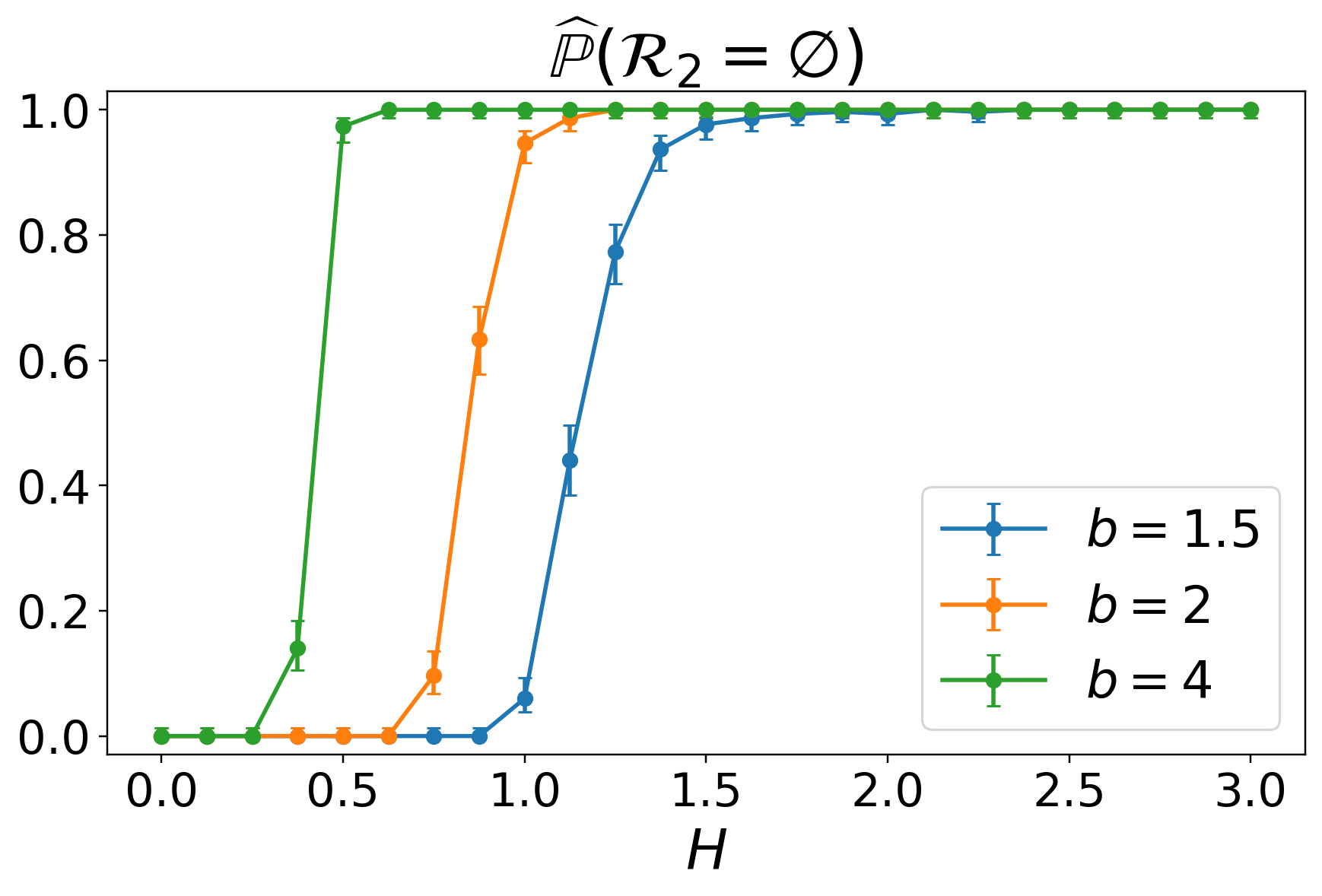}
\end{subfigure}
\hfill
\begin{subfigure}[t]{0.32\textwidth}
\centering
\includegraphics[width=\textwidth]{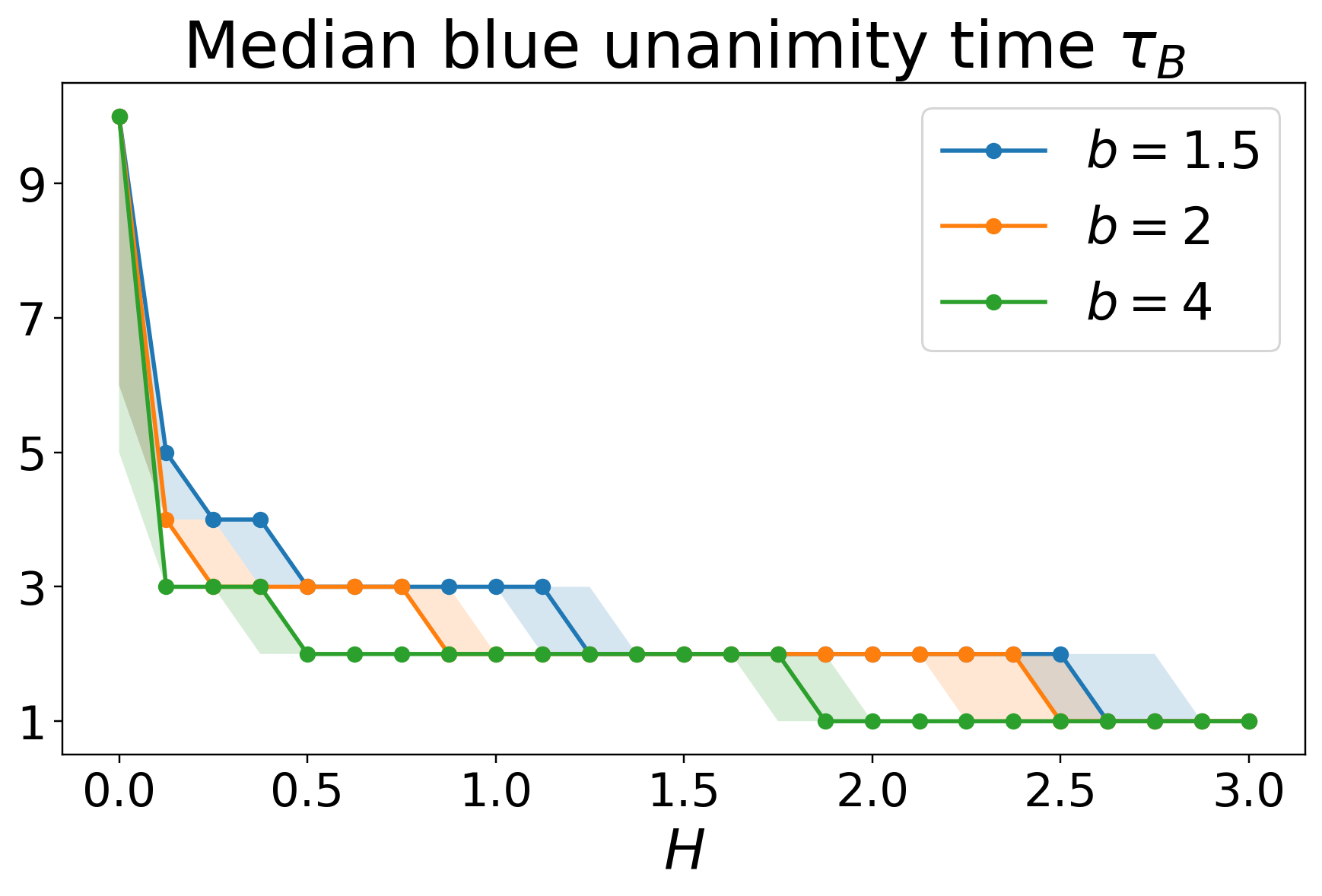}
\end{subfigure}
\caption{Constant-time simulation with \(N=10^4\) and
\(b\in\{1.5,2,4\}\), based on \(300\) independent paths per parameter
value. The first two panels show the empirical probability of blue unanimity
by times \(t=1\) and \(t=2\), respectively, with pointwise \(95\%\) Wilson
intervals. The final panel shows the median of
\(\min\{\tau_{\mathrm B},10\}\), with the \(10\%\)--\(90\%\) pathwise interval.}
\label{fig:er-short-time-experiment}
\end{figure}

\paragraph{Stepwise amplification and entrance time to constant-day unanimity.}
We design numerical experiments to illustrate the two mechanisms behind Theorem~\ref{thm:er-polylogarithmic-days-to-unanimity}: the amplification of the advantage in a single update and the cumulative number of updates needed to enter the completion scale \(N/\sqrt{\log N}\). For a tuning parameter \(k\geq 0\), we choose a deterministic initial configuration whose parity-admissible advantage equals, up to rounding,
\[
\Delta_0
=
\frac{N}
{\sqrt{\log N}\,
 \bigl(c_b\sqrt{\log N}\bigr)^k}.
\]
As predicted by Lemma~\ref{lem:cc-gaussian-flip-probabilities} and Lemma~\ref{lem:stepwise-advantage-amplification}, the advantage at time \(t+1\) is approximately given by
\[
\Delta_{t+1}\approx c_b\sqrt{\log N}\,\Delta_t,
\]
where $c_b=\sqrt{2b/\pi}$ denotes the one-step amplification constant. Subsequently, 
\[
\Delta_k\approx N/\sqrt{\log N},
\]
where \(k\) represents the predicted number of updates needed to enter the completion scale.
Along each simulated path, we record the normalized one-step amplification
\[
\frac{\Delta_1}{\sqrt{\log N}\,\Delta_0}
\]
and the predicted entrance time
\[
\tau
=
\inf\left\{t\geq0: \Delta_t\geq\frac{N}{\sqrt{\log N}}\right\}.
\]
\begin{figure}[h]
    \centering
    \begin{subfigure}[t]{0.45\textwidth}
    \centering
    \includegraphics[width=\textwidth]
    {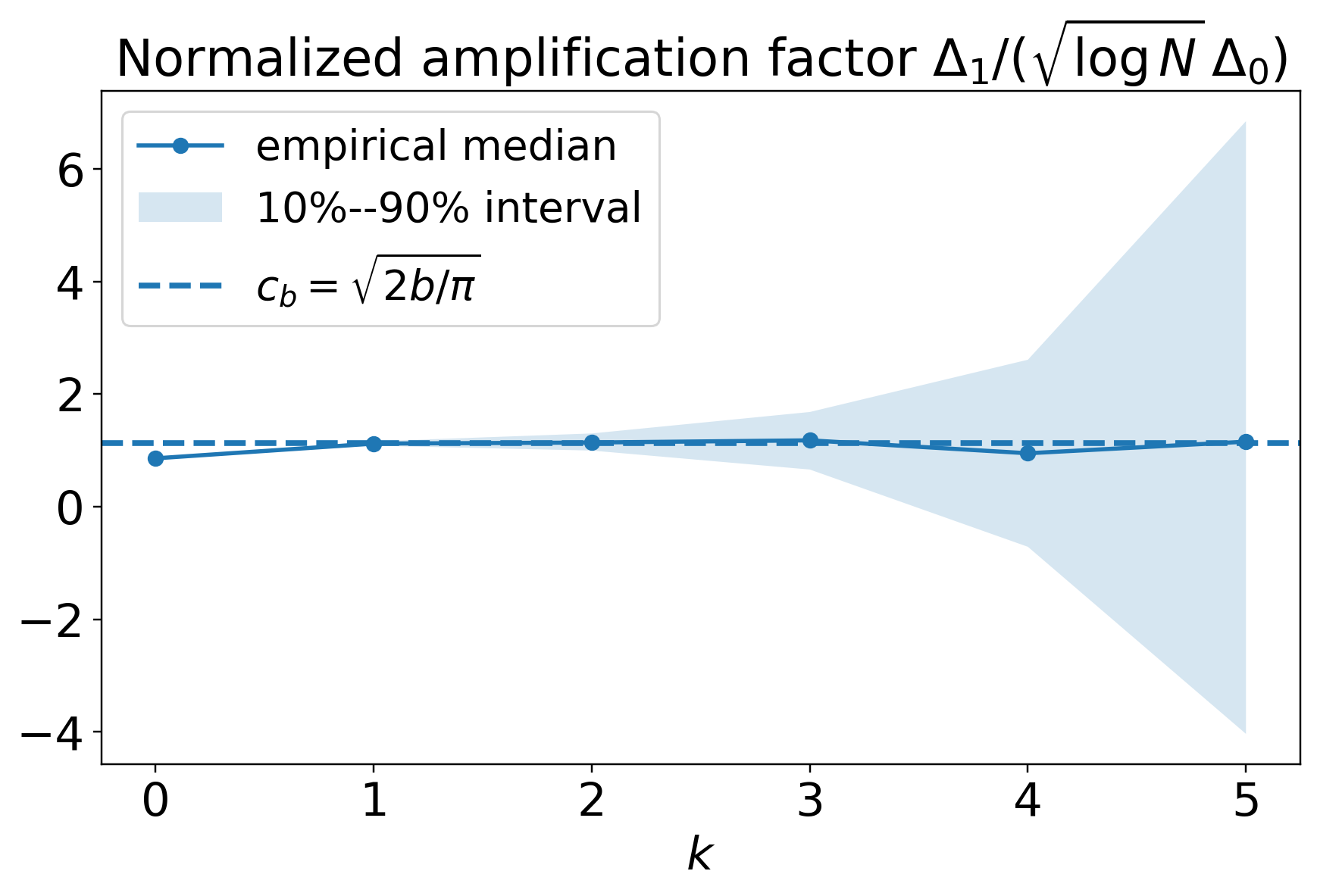}
    \end{subfigure}
    \hfill
    \begin{subfigure}[t]{0.45\textwidth}
    \centering
    \includegraphics[width=\textwidth]
    {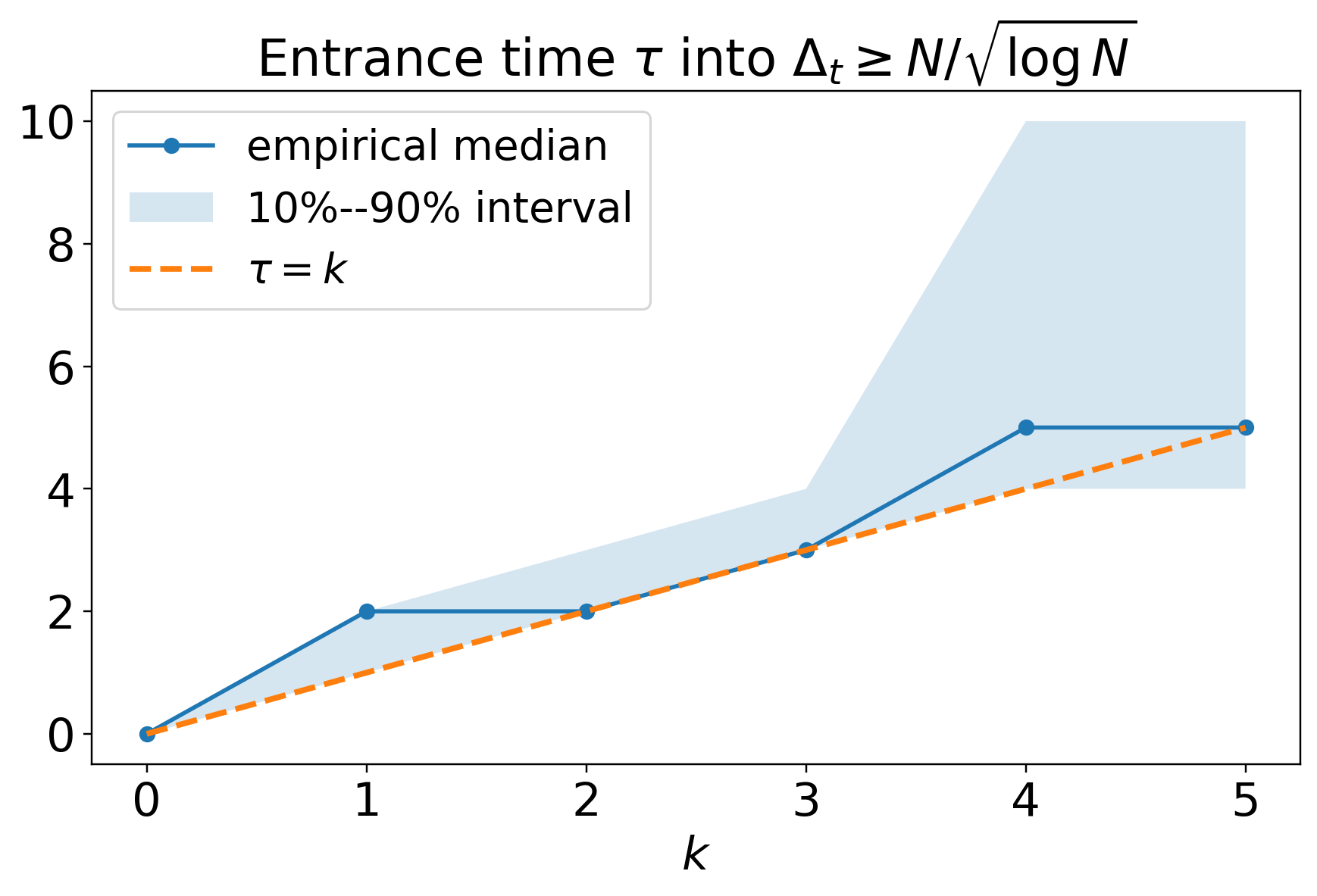}
    \end{subfigure}
    \caption{Stepwise amplification simulation with \(N=10^4\), \(b=2\), and
    \(k\in\{0,1,\ldots,5\}\), based on \(300\) independent paths per value of
    \(k\). Each path is simulated through day \(9\); a path that has not entered
    the completion scale by then is recorded at \(\tau=10\). Solid curves show
    empirical medians, and shaded regions show the \(10\%\)--\(90\%\) pathwise
    intervals. In the left panel, the dashed horizontal line is
    \(c_b=\sqrt{2b/\pi}\). In the right panel, the dashed line is the
    leading-order prediction \(\tau=k\).}
    \label{fig:er-stepwise-experiment}
\end{figure}

In Figure~\ref{fig:er-stepwise-experiment}, the left panel examines the one-step amplification.
Lemma~\ref{lem:stepwise-advantage-amplification} shows that, before the
completion scale is reached, one update multiplies the advantage by a factor
of order \(\sqrt{\log N}\), while the Gaussian approximation predicts the
more precise factor \(c_b\sqrt{\log N}\). Thus, the normalized quantity in
the left panel should be close to \(c_b\). This agreement persists while
\(\Delta_0\) remains above the fluctuation scale; for larger \(k\),
\(\Delta_0\) approaches or falls below \(\sqrt{N/\log N}\), outside the
range of the stepwise amplification lemma, and the pathwise variation
increases.

In the right panel, we examine the entrance time. By the design of \(\Delta_0\), the leading-order recursion predicts \(\tau\approx k\). This prediction is the empirical analogue of the deterministic horizons \(\underline T\) and \(\overline T\) in Theorem~\ref{thm:er-polylogarithmic-days-to-unanimity}, after which Theorem~\ref{thm:er-two-day-unanimity} guarantees blue unanimity within two additional updates. For larger \(k\), the pathwise variation increases as \(\Delta_0\) approaches or falls below \(\sqrt{N/\log N}\), outside the range of the stepwise amplification lemma, Lemma~\ref{lem:stepwise-advantage-amplification}.

\paragraph{Critical-window winner selection.}
We design numerical experiments to illustrate the winner-selection mechanism
behind Theorem~\ref{thm:er-critical-window-winner-selection}. At the critical
scale, the first update transforms an initial advantage of order
\(\sqrt{N/\log N}\) into a random advantage of order \(\sqrt N\); its sign
then determines which opinion enters the amplification regime and ultimately
reaches unanimity. For a tuning parameter \(x_0\in\R\) defined in Equation~\eqref{eq:critical-window-condition}, we choose a deterministic initial configuration whose parity-admissible advantage equals, up to rounding, $\Delta_0=\sqrt{\pi N/(2b\log N)}\,x_0$.

Along each simulated path, we record which opinion first reaches unanimity
and the corresponding unanimity time. For each value of \(x_0\), the
resulting winner indicators give the empirical blue-winning probability,
which Theorem~\ref{thm:er-critical-window-winner-selection} predicts to
satisfy
\[
\P(\text{blue wins}\mid\rvy_0)
\approx
\Phi(x_0),
\]
where \(\Phi\) is the standard normal distribution function.

In Figure~\ref{fig:er-critical-window-experiment}, the left, middle, and right panels examine \(b=1.5\), \(b=2\), and \(b=4\), respectively. In each panel, the empirical blue-winning probability increases from near zero to near one as \(x_0\) increases and closely follows the Gaussian curve \(\Phi(x_0)\). In particular, the balanced point \(x_0=0\) gives a blue-winning probability near \(1/2\), while positive and negative values favor blue and red, respectively. The agreement across all three panels shows that the \(b\)-dependence is absorbed by the normalization defining \(x_0\), as asserted by Theorem~\ref{thm:er-critical-window-winner-selection}.

\begin{figure}[H]
\centering
\begin{subfigure}[t]{0.32\textwidth}
\centering
\includegraphics[width=\textwidth]
{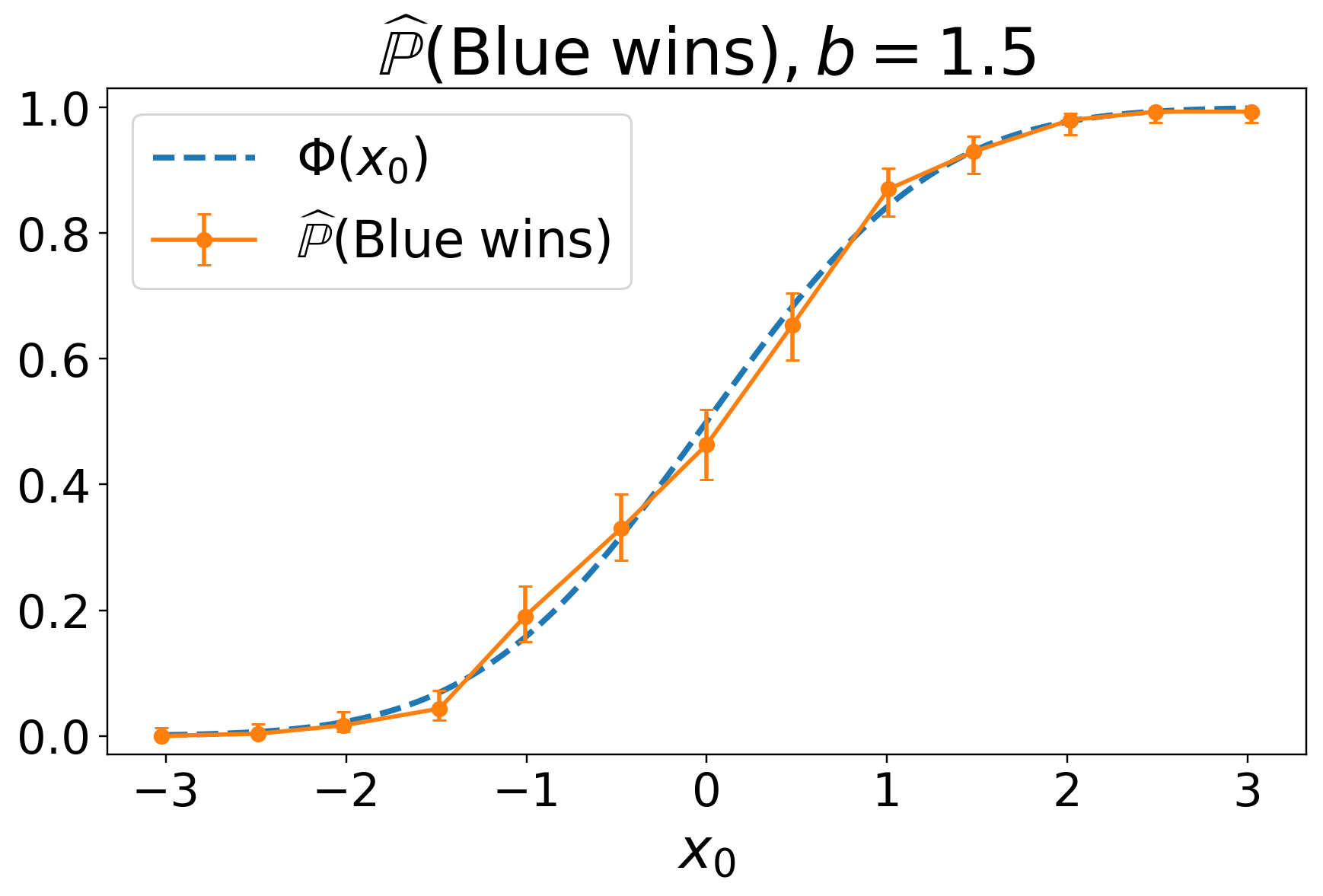}
\end{subfigure}
\hfill
\begin{subfigure}[t]{0.32\textwidth}
\centering
\includegraphics[width=\textwidth]
{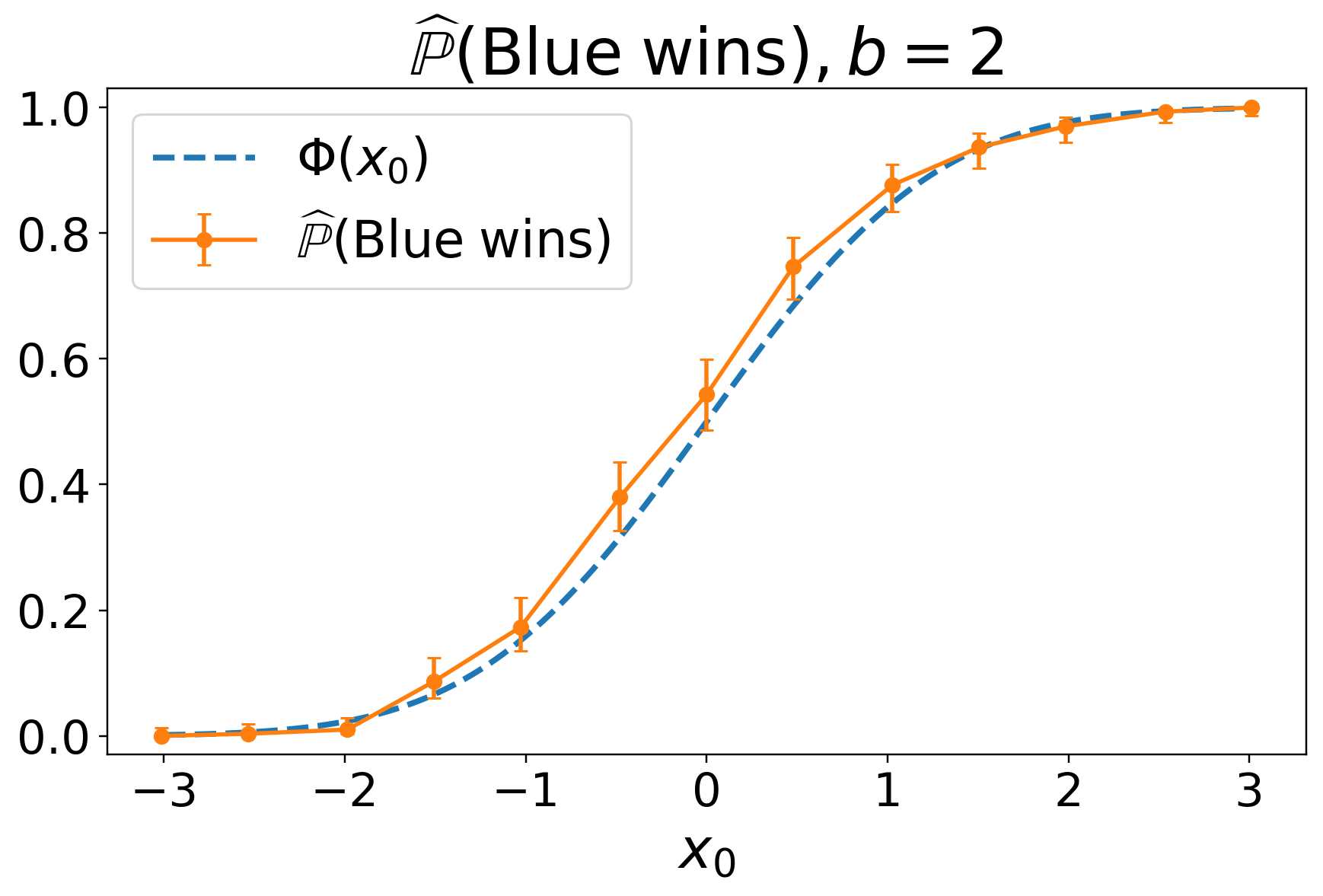}
\end{subfigure}
\hfill
\begin{subfigure}[t]{0.32\textwidth}
\centering
\includegraphics[width=\textwidth]
{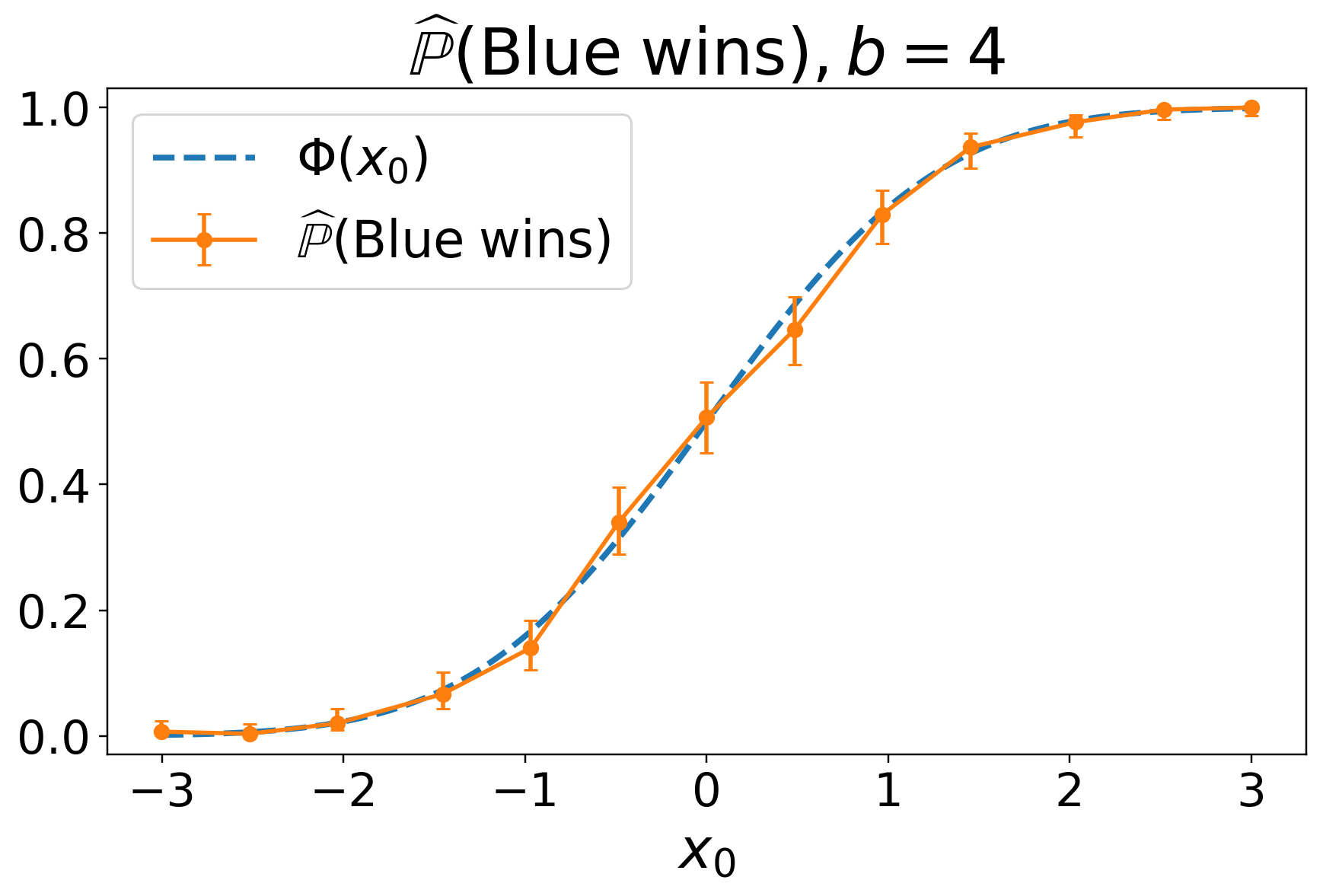}
\end{subfigure}
\caption{Critical-window winner-selection simulation with \(N=10^4\),
\(b\in\{1.5,2,4\}\), and \(13\) target values of \(x_0\) equally spaced in
\([-3,3]\), based on \(200\) independent paths per parameter value. Each path
is simulated until blue or red unanimity, or through day \(100\); a path that
remains unresolved is counted as not having a blue win. Solid curves show the
empirical blue-winning probabilities, with pointwise \(95\%\) Wilson
intervals. The dashed curve is the Gaussian prediction \(\Phi(x_0)\).}
\label{fig:er-critical-window-experiment}
\end{figure}

\subsection{Related literature}
\label{sec:related-literature}
\paragraph{Static Erd\H{o}s--R\'enyi update graphs.}
Majority dynamics on an update graph sampled from \(\mathbb{G}(N,p)\) and
then held fixed is governed jointly by the density \(p\), the initial advantage
\(\Delta_0\), and the initial coloring scheme. For an i.i.d.\ initial vertex
coloring with bias \(\delta_N\), Zehmakan~\cite{zehmakan2020opinion} proved
high-probability unanimity for \(p\ge \log N/N\) whenever
\(\delta_N\gg(Np)^{-1/2}\), equivalently
\(\lvert\Delta_0\rvert\gg\sqrt{N/p}\). Under the unbiased random initial
coloring, in which every vertex is independently assigned either color with
probability \(1/2\), the initial advantage is typically of order \(\sqrt N\).
Benjamini, Chan, O'Donnell, Tamuz, and Tan~\cite{benjamini2016convergence}
obtained winning probability at least \(0.4\) within four updates for
\(p\ge C N^{-1/2}\), and Fountoulakis, Kang, and
Makai~\cite{fountoulakis2020resolution} subsequently strengthened this to a
high-probability result. The sufficient density was then lowered progressively
by Chakraborti, Kim, Lee, and Tran~\cite{chakraborti2023majority} to
\(p\ge C N^{-3/5}\log N\), by Kim and Tran~\cite{kim2025new} to
\(p\gg N^{-2/3}(\log N)^{2/3}\), and by Jaffe~\cite{jaffe2025new} to
\(p\ge C N^{-2/3}\), where \(C\) denotes a sufficiently large constant that
may differ between results.

For prescribed initial camp sizes and fixed \(p\), Tran and Vu~\cite{tran2023reaching} showed that a constant advantage can attain any fixed winning probability below one, while Berkowitz and Devlin~\cite{berkowitz2020central} showed that \(\Delta_0=3\) on \(\mathbb{G}(N,1/2)\) already gives winning probability at least \(51\%\). Sah and Sawhney~\cite{sah2024majority} obtained the exact asymptotic winning probability
\[
\Phi\left(\Delta_0\sqrt{\frac{2p}{\pi(1-p)}}\right)+o(1),
\]
together with three-step unanimity for \((\log N)^{-1/16}\le p\le1-(\log N)^{-1/16}\) and $0<\Delta_0\le(\log N)^{1/4}$, thereby establishing the \emph{power-of-one} phenomenon for fixed \(p\).

For prescribed camp sizes and vanishing \(p\), Tran and Vu~\cite{tran2025power} proved that an advantage satisfying \(\lvert\Delta_0\rvert\geq 20/p\) in our full-gap convention leads to unanimity within \(O(\log_{Np}N)\) updates throughout the regime \((1+c)\log N\le pN \le N-10\) for some constant \(c>0\). Kim and Tran~\cite{kim2025new} improved the sufficient scale to \(\lvert\Delta_0\rvert\gg N^{-1/2}p^{-3/2}\log N\) for \(N^{-1}(\log N)^2\ll p\ll N^{-1/2}(\log N)^{1/4}\). More recently, Jaffe~\cite{jaffe2025new} proved that throughout the regime \((1+c)\log N\le pN \le N-10\), the sufficient scale, where the majority dynamics reaches unanimity within \(O(\log_{Np}N)\) updates, is
\[
\Delta_0 \geq \max\left\{
p^{-1/2}\exp\left(A\sqrt{\log(1/p)}\right),
\;Bp^{-3/2}N^{-1/2}
\right\}
\]
for some suitable constants \(A,B>0\). These results establish a sparse \emph{power-of-few} phenomenon, but the conjectured scale \(\lvert\Delta_0\rvert\asymp p^{-1/2}\) \cite{tran2025power} remains open. At \(p=b\log N/N\), the best current static bound is
\(N/(\log N)^{3/2}\) {\cite[Theorem 4.1]{jaffe2025new}}, rather than the conjectured \(p^{-1/2}\asymp\sqrt{N/\log N}\).

\paragraph{Stochastic Block Models for updates.}
An alternative choice for modeling the interactions between agents is the \emph{Stochastic Block Model} (SBM), where vertices within the same community are connected with probability $\alpha$, and vertices in different communities are connected with probability $\beta$. For fixed $\alpha, \beta \in (0, 1]$, Wang, Wei, and Zhang~\cite{wang2022consensus} studied a Markovian model that resamples every edge and a non-Markovian model that resamples an edge only when one of its endpoints changes opinion. They proved a \emph{power-of-one} winner-selection result for the Markovian model; for the non-Markovian model, they located, up to the second-order term, the initial-lead threshold between halting and unanimity within three updates and gave sufficient thresholds for unanimity in one or two updates.

In the sparse regime where $\alpha = a\log N/N$ and $\beta = b\log N/N$ for some constants $a, b > 0$, our companion work~\cite{dumitriu2026majority} analyzed majority dynamics under the assortative assumption ($a > b$). In that context, the progression toward unanimity is determined by the weighted advantage \(\widetilde\Delta_t \coloneqq b|\gB_t| - a|\gR_t|\), and new phenomena in the pace toward blue unanimity are uncovered, exhibiting constant-time, subpolynomial-time, and polynomial-time scales.

\paragraph{Other models for opinion dynamics.}
In the population-protocol model, Alistarh, Gelashvili, and Vojnovi\'c~\cite{alistarh2015fast} considered finite-state agents that interact in uniformly random pairs and seek exact consensus on the initial majority. Their \emph{Average-and-Conquer} protocol provides a tunable state--time tradeoff and achieves polylogarithmic expected parallel time when the per-agent state space is sufficiently large relative to the initial bias.

Podder and Roy~\cite{podder2026model} studied a growing directed preferential-attachment network in which each arriving agent samples past agents, adopts one of two immutable opinions through a reinforced stochastic rule, and links to sampled agents sharing that opinion. For fixed and suitably growing sample sizes, they characterized the almost-sure asymptotics of opinion shares, influence capital, and network activity; for fixed sample size, they also obtained second-order fluctuations.

Zimper, Djurdjevac Conrad, Cornalba, and Djurdjevac~\cite{zimper2026clustering} considered agent-based systems with continuous social positions and scalar opinions, allowing either one-way influence from social positions to opinions or two-way feedback. They derived reduced SPDEs for the resulting cluster dynamics and showed numerically that these reproduce the underlying systems at substantially lower computational cost, including in an application to General Social Survey data.

\subsection{Notation}
\label{sec:notation}
All asymptotic statements are taken as \(N\to\infty\), and all logarithms are
natural. We write \([N]=\{1,\ldots,N\}\), use \(\indi{E}\) for the indicator
of an event \(E\), and denote probability, expectation, and variance by
\(\P\), \(\E\), and \(\Var\). We write
\(\Ber(q)\), \(\Bin(n,q)\), and \(\Pois(\lambda)\) for the Bernoulli,
binomial, and Poisson distributions, respectively. Constants \(c,C>0\) may
change from line to line and may depend on fixed model parameters, but never
on \(N\). We use
\(O,\Omega,\Theta,o,\omega,\lesssim,\gtrsim,\asymp,\ll,\gg\) with their
standard meanings.

\subsection{Organization}
The rest of the paper is organized as follows.
Section~\ref{sec:one-step-evolution-advantage} develops the foundational
one-step estimates for the flip probabilities and the advantage. Section~\ref{sec:constant-days-to-unanimity} proves the two-day unanimity result in Theorem~\ref{thm:er-two-day-unanimity}. Section~\ref{sec:polylogarithmic-days-to-unanimity} proves Theorem~\ref{thm:er-polylogarithmic-days-to-unanimity} and Corollary~\ref{cor:er-polylogarithmic-days-to-unanimity-asymptotic}. Section~\ref{sec:er-critical-window-winner-selection} proves the critical-window winner-selection result in Theorem~\ref{thm:er-critical-window-winner-selection}. The proofs of supporting lemmas and technical results are collected in the Appendix.

\section{One-Step Evolution of The Advantage}\label{sec:one-step-evolution-advantage}
In this section, we present necessary preliminaries to study the evolution of the advantage. The proofs of Lemmas~\ref{lem:expected-variance-degree-differences} and \ref{lem:conditional-expectations-next-day} are deferred to Section~\ref{sec:deferred-Proofs-one-step-evolution-advantage}, while the proof of Lemma~\ref{lem:cc-gaussian-flip-probabilities} is deferred to Section~\ref{sec:poisson-proof-cc-gaussian}.

Recall the opinion update rule in \eqref{eqn:opinion-update}. To facilitate our analysis, whenever the corresponding camp is nonempty, we introduce the degree differences for a blue vertex $u\in \gB_{t}$ and a red vertex $v\in \gR_{t}$:
\begin{align}
    \rD_{t}^{\gB}(u) \coloneqq N^{\gR}_{t}(u) - N^{\gB}_{t}(u), \qquad \rD_{t}^{\gR}(v) \coloneqq N^{\gB}_{t}(v) - N^{\gR}_{t}(v). \label{eqn:degree-differences}
\end{align}
Conditioned on \(\rvy_{t}\), $u$ flips to red on day \(t+1\) if and only if \(\rD_{t}^{\gB}(u)>0\), while $v$ flips to blue on day \(t+1\) if and only if \(\rD_{t}^{\gR}(v)>0\). Since all edges in \(\gG_{t+1}\) have the same probability \(p\), the laws of these differences depend only on the current camp sizes. For convenience, we omit the vertex argument and write
\begin{subequations}
\begin{align}
    \rD_{t}^{\gB}\stackrel d=&\, \Bin(|\gR_{t}|,p)-\Bin(|\gB_{t}|-1,p),\label{eqn:D-Bt-distribution}\\
    \rD_{t}^{\gR} \stackrel d=&\, \Bin(|\gB_{t}|,p)-\Bin(|\gR_{t}|-1,p).\label{eqn:D-Rt-distribution}
\end{align}
\end{subequations}
We then introduce the flip probabilities as follows whenever the corresponding
camp is nonempty:
\begin{subequations}
    \begin{align}
        p_{t}^{\gB} \coloneqq &\, \P(\rD_{t}^{\gB}>0\mid\rvy_{t}), \qquad q_{t}^{\gB} \coloneqq \P(\rD_{t}^{\gB}\le0\mid\rvy_{t}), \label{eqn:p-q-B}\\
        p_{t}^{\gR} \coloneqq &\, \P(\rD_{t}^{\gR}>0\mid\rvy_{t}), \qquad q_{t}^{\gR} \coloneqq \P(\rD_{t}^{\gR}\le0\mid\rvy_{t}), \label{eqn:p-q-R}
    \end{align}
\end{subequations}
where $p_{t}^{\gR}$ (resp. $p_{t}^{\gB}$) denotes the probability that a red (resp. blue) vertex flips to blue (resp. red) on day $t + 1$, while $q_{t}^{\gR}$ (resp. $q_{t}^{\gB}$) denotes the probability that a red (resp. blue) vertex stays red (resp. blue) on day $t + 1$. If a camp is empty, we set its flip probability to zero and its stay probability to one; with these conventions, the camp-size expectation identities below remain valid at unanimity. When both camps are nonempty, we define the means and variances of $\rD_{t}^{\gB}$ and $\rD_{t}^{\gR}$ via
\begin{subequations}
\begin{align}
    m_{t}^{\gB} \coloneqq &\, \E\left[\rD_{t}^{\gB}\mid \rvy_{t}\right], \qquad v_{t}^{\gB} \coloneqq \Var\left(\rD_{t}^{\gB}\mid \rvy_{t}\right), \label{eqn:m-v-B}\\
    m_{t}^{\gR} \coloneqq &\,\E\left[\rD_{t}^{\gR}\mid \rvy_{t}\right], \qquad v_{t}^{\gR} \coloneqq \Var\left(\rD_{t}^{\gR}\mid \rvy_{t}\right). \label{eqn:m-v-R}
\end{align}
\end{subequations}
We also define the standardized means
\begin{align}
x_t^{\gR}
&\coloneqq\frac{m_t^{\gR}}{\sqrt{v_t^{\gR}}},
&
x_t^{\gB}
&\coloneqq\frac{m_t^{\gB}}{\sqrt{v_t^{\gB}}}.
\label{eqn:standardized-degree-difference-definitions}
\end{align}
We also define the scaled camp sizes
\begin{align}
b_t^{\gR}
&\coloneqq\frac{b|\gR_t|}{N},
&
b_t^{\gB}
&\coloneqq\frac{b|\gB_t|}{N}.
\label{eqn:scaled-camp-sizes}
\end{align}

\begin{lemma}[Expectations and variances of the degree differences]\label{lem:expected-variance-degree-differences}
    Under Assumption~\ref{ass:sparse-er}, whenever both camps are nonempty, the expected degree differences on day $t$ are
    \begin{align}
        m_{t}^{\gR} = p(\Delta_t+1), \qquad m_{t}^{\gB} = p(1-\Delta_t). \label{eqn:expected-degree-differences-R-B}
    \end{align}
Furthermore, the variances of the degree differences on day $t$ satisfy
\begin{align}
    v_{t}^{\gR}=v_{t}^{\gB}=(N-1)p(1-p)\asymp \log(N). \label{eqn:variance-degree-differences-R-B}
\end{align}
\end{lemma}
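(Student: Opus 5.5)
The proof is a direct computation from the distributional identities \eqref{eqn:D-Bt-distribution} and \eqref{eqn:D-Rt-distribution}. Conditioned on \(\rvy_t\), the graph \(\gG_{t+1}\) is independent of \(\rvy_t\). For a fixed red vertex \(v\), the counts \(N_t^{\gB}(v)\) and \(N_t^{\gR}(v)\) are sums of indicators over disjoint sets of potential edges. These sets are \(\{v,w\}\) with \(w\in\gB_t\), and \(\{v,w\}\) with \(w\in\gR_t\setminus\{v\}\). Hence they are independent, with laws \(\Bin(|\gB_t|,p)\) and \(\Bin(|\gR_t|-1,p)\). The same holds for a blue vertex with the roles exchanged.

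For the means, linearity of expectation gives
\[
m_t^{\gR}=p|\gB_t|-p(|\gR_t|-1)=p(\Delta_t+1),
\]
since \(|\gB_t|-|\gR_t|=\Delta_t\). Symmetrically,
\[
m_t^{\gB}=p|\gR_t|-p(|\gB_t|-1)=p(1-\Delta_t).
\]

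For the variances, independence of the two binomials means the variance of their difference is the sum of their variances. For a red vertex this gives \(p(1-p)\bigl(|\gB_t|+|\gR_t|-1\bigr)=(N-1)p(1-p)\), using \(|\gB_t|+|\gR_t|=N\) from \eqref{eqn:camp-sizes-from-advantage-intro}. The blue case gives the same value, so \(v_t^{\gR}=v_t^{\gB}=(N-1)p(1-p)\).

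For the asymptotic order, substitute \(p=b\log N/N\) from Assumption~\ref{ass:sparse-er}:
\[
(N-1)p(1-p)=b\log N\cdot\frac{N-1}{N}\Bigl(1-\frac{b\log N}{N}\Bigr)=b\log N\,(1+o(1)).
\]
This is \(\asymp\log N\), since \(b\) is a fixed positive constant.

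The only point needing care is the independence of the two binomial counts, which comes from the disjointness of the edge sets and the exclusion of the self-loop at \(v\) (the source of the \(-1\) terms). Once that is noted, the rest is routine, and I expect no real obstacle.
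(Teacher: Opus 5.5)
Your proposal is correct and follows the same route as the paper: linearity of expectation and additivity of variance for the two independent binomials in \eqref{eqn:D-Bt-distribution}--\eqref{eqn:D-Rt-distribution}, using \(|\gB_t|+|\gR_t|=N\), followed by substituting \(p=b\log N/N\). Your explicit justification of the independence of the two counts, via disjoint edge sets and the excluded self-loop that produces the \(-1\), is a small addition that the paper leaves implicit.
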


The camp-size identities in \eqref{eqn:camp-sizes-from-advantage-intro}, together
with \eqref{eqn:degree-differences}, give the following conditional
expectations.

\begin{lemma}[Conditional expectations of the next day's camp sizes and advantage]
    \label{lem:conditional-expectations-next-day}
    For all \(t \geq 0\), the conditional expectations of the camp sizes on the next day can be expressed as
    \begin{subequations}
    \begin{align}
        \E\left[ |\gB_{t+1}|\,\middle|\, \rvy_{t} \right]
&\, = |\gR_{t}|p_t^{\gR} + |\gB_{t}|q_t^{\gB}, \label{eq:B-next-expectation}\\
        \E\left[ |\gR_{t+1}|\,\middle|\, \rvy_{t} \right]
&\, = |\gR_{t}|q_t^{\gR} + |\gB_{t}|p_t^{\gB}. \label{eq:R-next-expectation}
    \end{align}
    \end{subequations}
    Furthermore, the conditional expectation of the advantage on the next day can be expressed as
    \begin{subequations}
        \begin{align}
            \E\left[\Delta_{t+1} \,\middle|\, \rvy_{t}\right]
            &\, = \Delta_t + 2 \left( |\gR_{t}|p_t^{\gR} - |\gB_{t}|p_t^{\gB} \right). \label{eq:advantage-next-expectation}\\
            &\, = N\bigl(p_t^{\gR}-p_t^{\gB}\bigr)
            +
            \Delta_t
            \bigl(1-p_t^{\gR}-p_t^{\gB}\bigr).
            \label{eq:advantage-expectation-expanded}
        \end{align}
    \end{subequations}
\end{lemma}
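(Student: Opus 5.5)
The plan is to write each camp size as a sum of indicators and use linearity of expectation, with the flip probabilities $p_t^{\gR}, p_t^{\gB}$ as the per-vertex probabilities. Condition on $\rvy_t$. Since $\gG_{t+1}$ is drawn independently of $\mathcal F_t$, the update rule \eqref{eqn:opinion-update} gives
\[
|\gB_{t+1}|=\sum_{v\in\gR_t}\indi{\rD_t^{\gR}(v)>0}+\sum_{u\in\gB_t}\indi{\rD_t^{\gB}(u)\le 0}.
\]
A red vertex becomes blue exactly when $N_t^{\gB}(v)>N_t^{\gR}(v)$; in a tie it keeps its own colour. Likewise a blue vertex stays blue exactly when $N_t^{\gR}(u)\le N_t^{\gB}(u)$, the tie again resolving to its current colour.

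For each fixed $v$, the pair $(N_t^{\gB}(v),N_t^{\gR}(v))$ is a pair of independent binomials, counting edges from $v$ to the other vertices of each camp. Its law depends only on the camp sizes, as recorded in \eqref{eqn:D-Bt-distribution}--\eqref{eqn:D-Rt-distribution}. So every indicator in the first sum has conditional mean $p_t^{\gR}$ and every indicator in the second has mean $q_t^{\gB}$. Linearity then gives \eqref{eq:B-next-expectation}, and dependence between different vertices' indicators plays no role. The same argument with the roles of the colours swapped gives \eqref{eq:R-next-expectation}. If one camp is empty, the stated conventions (flip probability zero, stay probability one) make both identities hold trivially, since the empty camp contributes zero and every vertex of the other camp keeps its colour.

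For the advantage, subtract the two identities and use $q_t^{\gB}=1-p_t^{\gB}$ and $q_t^{\gR}=1-p_t^{\gR}$:
\[
\E[\Delta_{t+1}\mid\rvy_t]=|\gR_t|(2p_t^{\gR}-1)+|\gB_t|(1-2p_t^{\gB})=\Delta_t+2(|\gR_t|p_t^{\gR}-|\gB_t|p_t^{\gB}).
\]
This is \eqref{eq:advantage-next-expectation}. Substituting $|\gB_t|=(N+\Delta_t)/2$ and $|\gR_t|=(N-\Delta_t)/2$ from \eqref{eqn:camp-sizes-from-advantage-intro} gives
\[
2|\gR_t|p_t^{\gR}-2|\gB_t|p_t^{\gB}=N(p_t^{\gR}-p_t^{\gB})-\Delta_t(p_t^{\gR}+p_t^{\gB}),
\]
and adding $\Delta_t$ yields \eqref{eq:advantage-expectation-expanded}.

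There is no substantial obstacle. The only points needing care are the tie convention, which decides whether ties count toward flipping or staying, and the empty-camp edge case.
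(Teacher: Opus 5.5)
Your proposal is correct and takes essentially the same route as the paper. Both write $|\gB_{t+1}|$ and $|\gR_{t+1}|$ as sums of per-vertex indicators and take conditional expectations by linearity. Both then subtract using $q_t^{\gR}=1-p_t^{\gR}$ and $q_t^{\gB}=1-p_t^{\gB}$, and substitute $2|\gR_t|=N-\Delta_t$ and $2|\gB_t|=N+\Delta_t$.
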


Therefore, to understand the one-step evolution of the advantage, we need to understand the flip probabilities \(p_t^{\gR}\) and \(p_t^{\gB}\) in a very detailed manner. A natural idea is to approximate them using a Gaussian distribution via the Berry--Esseen theorem below.
\begin{lemma}[Berry--Esseen \cite{esseen1956moment}]\label{lem:berry-esseen}
    Let $\rX_{1}, \rX_{2}, \ldots, \rX_{n}$ be independent random variables with zero means, variances $\sigma_{1}^{2}, \sigma_{2}^{2}, \ldots, \sigma_{n}^{2}$, respectively, and finite absolute third moments $\E[|\rX_{j}|^{3}] = \rho_{j} < \infty$. Define the normalized random variable \(\rS_{n}:=\sum_{j=1}^{n}\rX_{j}/\sqrt{\sum_{j=1}^{n}\sigma_{j}^{2}}\). Let $F_{n}(x)$ and $\Phi(x)$ denote the cumulative distribution functions of $\rS_{n}$ and the standard normal distribution, respectively. There exists a universal constant $\gC_{\mathrm{BE}}$ such that, for any \(n\),
    \[\sup_{x\in \R} |F_{n}(x) - \Phi(x)| \leq \gC_{\mathrm{BE}} \sum_{j=1}^{n} \rho_{j} ( \sum_{j=1}^{n} \sigma_{j}^{2})^{-3/2},
    \]
\end{lemma}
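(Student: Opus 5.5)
This is the classical Berry--Esseen inequality; for the present paper it is imported from \cite{esseen1956moment}. If I had to prove it, I would use the standard Fourier-analytic route: Esseen's smoothing inequality together with a comparison of characteristic functions.

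Set $B\coloneqq\sum_{j}\sigma_j^2$ and $L\coloneqq\sum_j\rho_j B^{-3/2}$ (the Lyapunov ratio). If $L\geq 1/4$ there is nothing to prove: $\sup_x|F_n(x)-\Phi(x)|\leq 1\leq 4L$. So assume $L<1/4$.

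The first step is Esseen's smoothing lemma. For any $T>0$,
\[
\sup_x|F_n(x)-\Phi(x)|\leq \frac1\pi\int_{-T}^{T}\frac{|\varphi_n(t)-e^{-t^2/2}|}{|t|}\,dt+\frac{24}{\pi\sqrt{2\pi}\,T},
\]
where $\varphi_n(t)=\prod_j\varphi_j(t/\sqrt B)$ is the characteristic function of $\rS_n$ and $\varphi_j$ is that of $\rX_j$. I would prove the lemma by convolving both distributions with a kernel whose Fourier transform is supported in $[-T,T]$ (the Fejér-type density $(1-\cos Tx)/(\pi Tx^2)$). I would then use that $\Phi$ has density bounded by $1/\sqrt{2\pi}$ to pass from the smoothed difference back to the unsmoothed one. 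I choose $T=1/(4L)$, so the boundary term is $O(L)$.

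The second step, which is the main work, is the pointwise bound
\[
|\varphi_n(t)-e^{-t^2/2}|\leq 16L|t|^3e^{-t^2/3}\qquad\text{for }|t|\leq T.
\]
Two preliminary facts are used throughout. Taylor's theorem gives $|\varphi_j(s)-1+\sigma_j^2s^2/2|\leq\rho_j|s|^3/6$. Lyapunov's inequality gives $\sigma_j^3\leq\rho_j$, so $\sigma_j/\sqrt B\leq L^{1/3}$.

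The bound is proved separately on two ranges of $t$.
\begin{itemize}
\item For $|t|\leq L^{-1/3}/2$, every factor $\varphi_j(t/\sqrt B)$ is close to $1$. Take logarithms, expand $\log\varphi_j$ to second order, and control the cubic remainder via $\rho_j$. This yields $\log\varphi_n(t)=-t^2/2+\theta$ with $|\theta|\leq CL|t|^3$. Then $|e^{\theta}-1|\leq|\theta|e^{|\theta|}$ gives the claim.
\item For $L^{-1/3}/2<|t|\leq 1/(4L)$, I use symmetrization. Replacing $\rX_j$ by $\rX_j-\rX_j'$ gives $|\varphi_j(s)|^2\leq\exp(-\sigma_j^2s^2+\tfrac43\rho_j|s|^3)$. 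Hence $|\varphi_n(t)|\leq e^{-t^2/3}$ when $|t|\leq 1/(4L)$. On this range $1\leq 8L|t|^3$, so both $|\varphi_n(t)|$ and $e^{-t^2/2}$ are bounded by $8L|t|^3e^{-t^2/3}$, and the estimate follows.
\end{itemize}

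The final step is to insert the pointwise bound into the smoothing integral. This gives
\[
\int \frac{16L|t|^3e^{-t^2/3}}{|t|}\,dt=O(L),
\]
and adding the $O(L)$ boundary term from the first step yields $\sup_x|F_n-\Phi|\leq \gC_{\mathrm{BE}}L$ with a universal constant.

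The step I expect to be most delicate is the characteristic-function comparison in the intermediate range. There, neither the logarithmic expansion nor the crude decay bound works alone, and the choice $T\asymp 1/L$ must be matched carefully with the symmetrization estimate so that the constants close up.
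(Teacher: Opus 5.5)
Your proposal is correct. The paper gives no proof of this lemma and simply quotes it from Esseen. Your sketch is the standard textbook argument: Esseen's smoothing inequality with $T=1/(4L)$, combined with Petrov's characteristic-function bound $|\varphi_n(t)-e^{-t^2/2}|\le 16L|t|^3e^{-t^2/3}$ for $|t|\le 1/(4L)$. Its ingredients all check out:
\begin{itemize}
\item the symmetrization estimate $|\varphi_n(t)|\le e^{-t^2/3}$;
\item the split at $|t|=L^{-1/3}/2$, beyond which $8L|t|^3\ge 1$;
\item the logarithmic expansion on the inner range, where $|\varphi_j(t/\sqrt B)-1|\le 1/8$ makes the principal logarithm well defined, and the quartic remainder is absorbed using $\sigma_j|t|/\sqrt B\le 1/2$.
\end{itemize}
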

However, a straightforward application of Lemma~\ref{lem:berry-esseen} results in an error term of order \(O((\log N)^{-1/2})\), which neither accounts for the half-integer lattice correction nor achieves the sharper \(O((\log N)^{-1})\) error required in \eqref{eq:cc-gaussian-pR} and \eqref{eq:cc-gaussian-pB}. To address this, we instead employ the Poisson approximation, as developed in Section~\ref{sec:poisson-proof-cc-gaussian} of the Appendix. Under the current sparsity Assumption~\ref{ass:sparse-er}, the Poisson approximation is more accurate than the final approximation requires. Indeed, Le Cam's inequality and maximal coupling couple \(\Bin(m,p)\) and \(\Pois(mp)\) with mismatch probability at most \(mp^2\). The two binomial variables in either one-vertex degree difference contain \(N-1\) trials in total, so two independent couplings and a union bound give \((N-1)p^2\) as an upper bound, which is
\[
O\left(\frac{(\log N)^2}{N}\right)
=o\left(\frac1{\log N}\right),
\]
under \eqref{eqn:er-edge-probability}. Thus the coupling error is smaller than the accuracy required in Lemma~\ref{lem:cc-gaussian-flip-probabilities}. The remaining task is to obtain an \(O((\log N)^{-1})\) continuity-corrected approximation for the corresponding difference of independent Poisson variables.
\begin{lemma}[Continuity-corrected Gaussian estimates in the one-vertex Gaussian window]
    \label{lem:cc-gaussian-flip-probabilities}
    Recall \(x_t^{\gR},x_t^{\gB}\) from
    \eqref{eqn:standardized-degree-difference-definitions}. Fix \(c_0>0\)
    and \(M<\infty\), condition on \(\rvy_t\), and suppose that
    \begin{align}
    |\gR_t|\ge c_0N,
    \qquad
    |\gB_t|\ge c_0N,
    \label{eqn:linear-camp-condition}
    \end{align}
    while $x_t^{\gR}$ and $x_t^{\gB}$ in \eqref{eqn:standardized-degree-difference-definitions} satisfy
    \begin{align}
    |x_t^{\gR}|\le M,
    \qquad
    |x_t^{\gB}|\le M.
    \label{eqn:standardized-degree-differences}
    \end{align}
    Then, uniformly over such configurations,
    \begin{subequations}
    \begin{align}
    p_t^{\gR}
    &=
    \Phi\left(
    x_t^{\gR}-\frac{1}{2\sqrt{v_t^{\gR}}}
    \right)
    +O\left(\frac1{\log N}\right),
    \label{eq:cc-gaussian-pR}\\
    p_t^{\gB}
    &=
    \Phi\left(
    x_t^{\gB}-\frac{1}{2\sqrt{v_t^{\gB}}}
    \right)
    +O\left(\frac1{\log N}\right).
    \label{eq:cc-gaussian-pB}
    \end{align}
    \end{subequations}
    For \(\Delta_t\ge1\), there are constants \(c,C>0\) such that
    \begin{align}
    c\frac{p\Delta_t}{\sqrt{(N-1)p(1-p)}}
    \le
    p_t^{\gR}-p_t^{\gB}
    \le
    C\frac{p\Delta_t}{\sqrt{(N-1)p(1-p)}}.
    \label{eq:cc-gaussian-flip-difference}
    \end{align}
    Furthermore, for every fixed \(C_0<\infty\), when
    \(0\le\Delta_t\le C_0\sqrt{N/\log N}\), we have
    \begin{align}
    p_t^{\gR}-p_t^{\gB}
    =
    \sqrt{\frac{2}{\pi}}
    \frac{p\Delta_t}{\sqrt{(N-1)p(1-p)}}
    +O\left(\frac{p\Delta_t}{\log N}\right).
    \label{eq:cc-gaussian-critical-flip-difference}
    \end{align}
    All implicit constants and the constants \(c,C\) depend only on
    \(b,c_0,M\), and additionally on \(C_0\) in
    \eqref{eq:cc-gaussian-critical-flip-difference}.
\end{lemma}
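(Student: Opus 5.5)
We first reduce the binomial problem to a Poisson one and then prove a lattice-corrected normal approximation for a Skellam variable. By Le Cam's inequality and maximal coupling, as sketched before the statement, we can write \(\rD_t^{\gR}=X-Y+E\), where \(X\sim\Pois(|\gB_t|p)\) and \(Y\sim\Pois((|\gR_t|-1)p)\) are independent and \(\P(E\neq0)\le (N-1)p^2=o(1/\log N)\). Hence \(p_t^{\gR}=\P(X-Y\ge1)+o(1/\log N)\). Set \(\lambda=|\gB_t|p\) and \(\mu=(|\gR_t|-1)p\). Both are \(\asymp\log N\) by \eqref{eqn:linear-camp-condition}. The mean \(\lambda-\mu=p(\Delta_t+1)\) equals \(m_t^{\gR}\). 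The variance \(\lambda+\mu=(N-1)p\) differs from \(v_t^{\gR}=(N-1)p(1-p)\) only by \(O(\log^2 N/N)\), so replacing one by the other inside \(\Phi\) costs a negligible amount. The blue case is identical with \(\lambda=|\gR_t|p\) and \(\mu=(|\gB_t|-1)p\).

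\textbf{Main step.} We need an Edgeworth-type estimate for the integer-valued variable \(S=X-Y\) with variance \(\sigma^2=\lambda+\mu\asymp\log N\):
\[
\P(S\ge 1)=\P(S> 1/2)=1-\Phi\!\left(\tfrac{1/2-(\lambda-\mu)}{\sigma}\right)+O(\sigma^{-2}).
\]
The continuity correction at the half-integer \(1/2\) cancels the \(O(\sigma^{-1})\) lattice term. The skewness term \(\kappa_3(1-z^2)\phi(z)/(6\sigma^3)\) is also \(O(\sigma^{-1})\), where \(\kappa_3=\lambda-\mu\) is the third cumulant. Its size is \(|\lambda-\mu|/\sigma^3=|x|/\sigma^2\), and this is \(O(1/\log N)\) because \(|x_t^{\gR}|\le M\).

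I would prove this directly. \(S\) is a sum of \(n\) i.i.d.\ copies of \(\Pois(\lambda/n)-\Pois(\mu/n)\), which is lattice with span 1 and has all cumulants of order \(1/n\). The classical Esseen lattice Edgeworth expansion (Esseen 1945; Petrov Ch.~VI) with one correction term applies, uniformly over the compact parameter range \(\lambda,\mu\asymp\log N\). As an alternative, one can write the local probabilities via Bessel functions, apply the local CLT with a first-order correction, and sum them. This uniformity is the step I expect to be the main obstacle. It requires controlling the characteristic function \(\exp(\lambda(e^{i\theta}-1)+\mu(e^{-i\theta}-1))\) away from \(\theta=0\), where its modulus is \(\exp(-(\lambda+\mu)(1-\cos\theta))\) and hence decays like \(N^{-c}\). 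Rewriting \(1-\Phi((1/2-m)/\sigma)=\Phi(x-1/(2\sigma))\) then gives \eqref{eq:cc-gaussian-pR} and \eqref{eq:cc-gaussian-pB}.

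For the flip differences, apply \eqref{eq:cc-gaussian-pR} and \eqref{eq:cc-gaussian-pB} with \(v:=v_t^{\gR}=v_t^{\gB}\). By Lemma~\ref{lem:expected-variance-degree-differences}, \(x_t^{\gR}-1/(2\sqrt v)=(p(\Delta_t+1)-1/2)/\sqrt v\) and \(x_t^{\gB}-1/(2\sqrt v)=(p(1-\Delta_t)-1/2)/\sqrt v\). Taking the difference of \(\Phi\) and using the mean value theorem, \(p_t^{\gR}-p_t^{\gB}=\phi(\zeta)\,2p\Delta_t/\sqrt v+\text{error}\) for some \(\zeta\) between the two arguments, with \(|\zeta|\le M+1\). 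This gives the two-sided bound \eqref{eq:cc-gaussian-flip-difference} with \(c,C\) comparable to \(\phi(M+1)\) and \(\phi(0)\).

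There is one obstacle: the additive \(O(1/\log N)\) errors can exceed \(p\Delta_t/\sqrt v\asymp\Delta_t/(N\sqrt{\log N})\) when \(\Delta_t\) is small. To avoid subtracting two separate approximations, I would instead couple the two degree differences directly. Enlarging the blue camp by \(\Delta_t\) vertices shifts \(\rD\) by a difference of \(\Pois(p\Delta_t)\) variables. A local CLT for the point probabilities of \(S\), which are \(\asymp\sigma^{-1}\) uniformly near the mean, then gives \(p^{\gR}-p^{\gB}\asymp p\Delta_t\cdot\sigma^{-1}\) with relative error \(O(\sigma^{-1})\). This yields \eqref{eq:cc-gaussian-flip-difference}.

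In the critical window \(0\le\Delta_t\le C_0\sqrt{N/\log N}\), the shift is \(p\Delta_t=O(1)\) and \(x=O(\sigma^{-1})\). Using the same local expansion with point mass \(\phi(0)/\sigma+O(\sigma^{-2})\) near zero, the difference is \(2\phi(0)p\Delta_t/\sqrt v+O(p\Delta_t/\sigma^2)\), where \(2\phi(0)=\sqrt{2/\pi}\). This is exactly \eqref{eq:cc-gaussian-critical-flip-difference}.
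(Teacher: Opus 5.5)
Your overall plan matches the paper's: Le Cam coupling to a Skellam variable, a half-integer continuity-corrected normal approximation with $O(1/\log N)$ error, and an exact reduction of $p_t^{\gR}-p_t^{\gB}$ to point masses followed by a local CLT. The flip-difference step, however, has a gap at the binomial-to-Poisson transfer. As written, you reduce to $p_t^{\gR}=\P(X-Y\ge1)+o(1/\log N)$. That error is the Le Cam mismatch $(N-1)p^2\asymp(\log N)^2/N$, and it does not scale with $\Delta_t$. You then carry out the shift coupling in the Poisson world ($\Pois(p\Delta_t)$ increments, point masses of $S$). Used separately for $p_t^{\gR}$ and $p_t^{\gB}$, this leaves an additive error of order $(\log N)^2/N$ in the difference. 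By contrast, the main term of \eqref{eq:cc-gaussian-flip-difference} is $p\Delta_t/\sqrt{v_t^{\gR}}\asymp\Delta_t\sqrt{\log N}/N$, and the error permitted in \eqref{eq:cc-gaussian-critical-flip-difference} is $p\Delta_t/\log N\asymp\Delta_t/N$. So for $\Delta_t\lesssim(\log N)^{3/2}$, resp.\ $\Delta_t\lesssim(\log N)^{2}$ (in particular $\Delta_t=1$), the stated bounds do not follow. This is the same obstacle you identified for the Gaussian errors, reappearing one step earlier at the Le Cam step.

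The fix is to make the shift exact at the binomial level and Poisson-approximate only point masses. Let $S_0\stackrel d=\Bin(|\gR_t|,p)-\Bin(|\gR_t|-1,p)$ and let $A,A'\sim\Bin(\Delta_t,p)$ be independent. Then $\rD_t^{\gR}\stackrel d=S_0+A$ and $\rD_t^{\gB}\stackrel d=S_0-A'$, so exactly $p_t^{\gR}-p_t^{\gB}=\P(-A<S_0\le0)+\P(0<S_0\le A')$. Le Cam and the local CLT are then applied to $\P(S_0=i)$, so the coupling error gets multiplied by $\E[A+A']=2p\Delta_t$. Equivalently, couple jointly and observe that the two indicators coincide on $\{A=A'=0\}$.

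The paper does precisely this, one $\Ber(p)$ trial at a time, obtaining $p_t^{\gR}-p_t^{\gB}=p\sum_{j=0}^{\Delta_t-1}[\P(\rD_{t,j}^{\gR}=0)+\P(\rD_{t,j}^{\gB}=1)]$. Each mass there is Poisson-approximated with absolute error $Np^2$, which is negligible against its size $\asymp(\log N)^{-1/2}$. This version needs masses only at $0$ and $1$, so it also avoids the tail control on $A$ that your lower bound requires when $p\Delta_t$ is of order $\sqrt{\log N}$.

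On the main step, the standard one-term statement of Esseen's lattice expansion only guarantees an $o(\sigma^{-1})$ remainder for a fixed summand law. You need the two-term version, whose extra terms are $O(\sigma^{-2})$, with a remainder uniform over the varying laws $\Pois(\lambda/n)-\Pois(\mu/n)$. That is exactly the uniformity you flagged. The paper sidesteps it by adding an independent $U$ uniform on $[-1/2,1/2]$. This turns $\P(S\le\ell)$ exactly into a continuous distribution function evaluated at $(\ell+1/2-m)/\sqrt{\lambda}$. The explicit characteristic function is then bounded directly, with the sinc factor vanishing at the aliases $2\pi j\sqrt{\lambda}$.
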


\section{Constant Days To Unanimity}\label{sec:constant-days-to-unanimity}

We present the proof of Theorem~\ref{thm:er-two-day-unanimity} in this section. The proof proceeds backward from unanimity. We first identify a red-camp size below which all remaining red vertices disappear in one update. We then show that an advantage of order \(N/\sqrt{\log N}\), with coefficient strictly above \(K_2^{\mathrm{ER}}(b)\), reaches this extinction region in one update. 

We first present two Lemmas needed, with their proofs deferred to Section~\ref{sec:deferred-Proofs-constant-days-to-unanimity}.

\subsection{Reduction to the extinction regime}

\begin{lemma}[A small red camp disappears in one day]
\label{lem:one-day-extinction}
Fix a deterministic time \(t\ge0\) and a constant \(0<r<r_*\), where \(r_*\)
is defined in \eqref{eqn:r-star}. If
\begin{align}
|\gR_t|\le rN,
\label{eqn:one-day-extinction-condition}
\end{align}
then there exists \(\xi=\xi(b,r)>0\) such that, for all sufficiently large
\(N\),
\begin{align}
\P\left(\gR_{t+1}\neq\emptyset\,\middle|\,\rvy_t\right)
\le 2N^{-\xi}.
\label{eqn:one-day-extinction-probability}
\end{align}
\end{lemma}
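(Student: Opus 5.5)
A union bound over the remaining red vertices combined with a Chernoff-type large-deviation estimate for a single degree difference should suffice. Conditioned on \(\rvy_t\), the event \(\gR_{t+1}\neq\emptyset\) requires either some red vertex \(v\in\gR_t\) to stay red or be tied, i.e. \(\rD_t^{\gR}(v)\le0\), or some blue vertex \(u\in\gB_t\) to flip, i.e. \(\rD_t^{\gB}(u)>0\). A tie keeps the current color, so a tied red vertex remains red, while a tied blue vertex stays blue and is harmless. The union bound gives
\begin{align*}
\P(\gR_{t+1}\neq\emptyset\mid\rvy_t)\le |\gR_t|\,q_t^{\gR}+|\gB_t|\,p_t^{\gB}\le N\bigl(q_t^{\gR}+p_t^{\gB}\bigr).
\end{align*}
It therefore suffices to show \(q_t^{\gR},p_t^{\gB}\le N^{-1-\xi}\). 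By monotonicity in \(|\gR_t|\) (stochastic domination of binomials), we may assume \(|\gR_t|=\lfloor rN\rfloor\) or treat general \(|\gR_t|\le rN\) directly; the bounds below only improve as \(|\gR_t|\) decreases.

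For a red vertex, \(\rD_t^{\gR}\stackrel d=X-Y\) with \(X\sim\Bin(|\gB_t|,p)\) and \(Y\sim\Bin(|\gR_t|-1,p)\) independent. We bound \(\P(X-Y\le0)\le \E[e^{\lambda(Y-X)}]\) for \(\lambda>0\). Using \(1+z\le e^z\), this gives
\begin{align*}
\exp\bigl(p|\gB_t|(e^{-\lambda}-1)+p|\gR_t|(e^{\lambda}-1)\bigr).
\end{align*}
Write \(p|\gB_t|=b(1-r')\log N\) and \(p|\gR_t|\le br'\log N\), where \(r'=|\gR_t|/N\le r\). Optimizing at \(e^{\lambda}=\sqrt{(1-r')/r'}\) yields the exponent
\begin{align*}
-b\bigl(\sqrt{1-r'}-\sqrt{r'}\bigr)^2\log N.
\end{align*}
The map \(r'\mapsto(\sqrt{1-r'}-\sqrt{r'})^2\) is decreasing on \([0,1/2]\). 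Since \(r<r_*\), the definition of \(r_*\) gives \(b(\sqrt{1-r'}-\sqrt{r'})^2\ge b(\sqrt{1-r}-\sqrt r)^2=1+2\xi\) for some \(\xi>0\). Hence \(q_t^{\gR}\le N^{-1-2\xi}\). If \(r'\) is tiny or zero, we choose \(\lambda\) large and fixed instead, or note the bound only improves. The same Chernoff computation with the roles exchanged bounds \(p_t^{\gB}=\P(Y'-X'>0)\), where \(Y'\sim\Bin(|\gR_t|,p)\) and \(X'\sim\Bin(|\gB_t|-1,p)\), by the same exponent up to a factor \(e^{\lambda p}=1+o(1)\) coming from the missing single trial. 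Thus \(p_t^{\gB}\le 2N^{-1-2\xi}\).

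Combining, \(N(q_t^{\gR}+p_t^{\gB})\le 3N^{-2\xi}\le 2N^{-\xi}\) for large \(N\). The main obstacle is bookkeeping rather than ideas. One must check that the off-by-one trial, the fact that \(\lfloor rN\rfloor/N\) need not equal \(r\), and the degenerate case \(|\gR_t|\) small all preserve a strict margin above exponent \(1\). We handle this by fixing \(r<r''<r_*\) and running the argument with \(r''\), absorbing the \(O(p)\) and \(O(1/N)\) corrections into the slack between \(r\) and \(r''\).
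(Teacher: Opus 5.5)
Your proof is correct, but it obtains the key estimate differently from the paper. Both arguments use the same union bound $|\gR_t|q_t^{\gR}+|\gB_t|p_t^{\gB}$ and the same monotonicity in $|\gR_t|$. The paper then reduces to the worst case $|\gR_t|=\lfloor rN\rfloor$, so that both camps have linear size. This lets it invoke the two-sided large-deviation Corollary~\ref{cor:flip-probability-ld}, imported from the companion paper, which gives $q_t^{\gR},p_t^{\gB}=N^{-I(b_t^{\gB},b_t^{\gR})+o(1)}$ with $I=b(\sqrt{1-r}-\sqrt r)^2+o(1)$. The uniform $o(1)$ is absorbed by taking $\xi=\kappa/2$.

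You instead derive the needed upper bound directly from the exponential moment. With $e^{\lambda}=\sqrt{(1-r')/r'}$, the Chernoff exponent is at most $-b(\sqrt{1-r'}-\sqrt{r'})^2\log N$. This is exactly the rate function $I$, so nothing is lost relative to the paper's estimate.

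Your route has three advantages:
\begin{itemize}
\item it is self-contained;
\item it is non-asymptotic: there is no $o(1)$ in the exponent, only the harmless factor $e^{p}$ from the missing self-trial in $p_t^{\gB}$;
\item it covers every $1\le|\gR_t|\le rN$ at once, without needing linear camp sizes.
\end{itemize}
For the same reason, the auxiliary slack $r<r''<r_*$ you mention at the end is unnecessary.

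What the paper's route buys is reuse of machinery already developed in the companion work, together with a matching lower bound. That lower bound shows the exponent is sharp, i.e.\ that $r_*$ is precisely where this first-moment argument stops working. This lemma does not need it.
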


For a fixed \(K>K_2^{\mathrm{ER}}(b)\), define
\begin{align}
q_K
&\coloneqq
\Phi(-\sqrt b\,K).
\label{eqn:qK-def}
\end{align}
By \eqref{eqn:K2-def}, \(K>K_2^{\mathrm{ER}}(b)\) is equivalent to
\(q_K<r_*\), so one can choose an extinction density strictly between
these two quantities.

\begin{lemma}[One-step reduction below the extinction threshold]
\label{lem:one-step-reduction}
Fix a deterministic time \(t\ge0\), condition on \(\rvy_t\), and suppose that
for some fixed \(K>K_2^{\mathrm{ER}}(b)\),
\begin{align}
\Delta_t\ge K\frac{N}{\sqrt{\log N}}.
\label{eqn:one-step-reduction-condition}
\end{align}
Then, for any fixed \(r\) satisfying
\begin{align}
q_K<r<r_*,
\label{eqn:one-step-extinction-density-condition}
\end{align}
we have, for all sufficiently large \(N\),
\begin{align}
\P\left(
|\gR_{t+1}|\le rN
\,\middle|\,
\rvy_t
\right)
\ge
1-2\exp\left(-\left(\log N\right)^2\right).
\label{eqn:one-step-reduction-probability}
\end{align}
\end{lemma}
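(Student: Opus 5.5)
The plan is to bound $\E[|\gR_{t+1}|\mid\rvy_t]$ by roughly $q_K N$ and then control the fluctuation of $|\gR_{t+1}|$ around this mean. Conditional on $\rvy_t$, we have
\[
|\gR_{t+1}|=\sum_{u\in\gB_t}\indi{\rD_t^{\gB}(u)>0}+\sum_{v\in\gR_t}\indi{\rD_t^{\gR}(v)\le 0}.
\]
We first reduce to a clean window. If $\Delta_t\ge (1-2r)N$, then $|\gR_t|\le rN$ already. Taking the extinction statement of Lemma~\ref{lem:one-day-extinction} at the level of expectations, the probability bound should still hold; but it is simpler to argue monotonically. Red neighbours only help blue vertices flip, so larger $\Delta_t$ stochastically decreases $|\gR_{t+1}|$. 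I would justify this by a coupling in which vertices of $\gR_t$ are recoloured blue with the same graph. This reduces the problem to $\Delta_t=K'N/\sqrt{\log N}$ for a fixed $K'\in(K_2^{\mathrm{ER}}(b),K]$ chosen with $q_{K'}<r$. In that window both camps have size $\ge N/3$.

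\textbf{Mean bound.} By Lemma~\ref{lem:expected-variance-degree-differences}, $m_t^{\gR}=p(\Delta_t+1)$ and $v_t^{\gR}=(N-1)p(1-p)$. Hence
\[
x_t^{\gR}=\frac{p\Delta_t}{\sqrt{Np}}(1+o(1))=K'\sqrt{p N/\log N}(1+o(1))=\sqrt b\,K'(1+o(1)).
\]
Similarly, $x_t^{\gB}=-\sqrt b\,K'(1+o(1))$. Both are bounded, so Lemma~\ref{lem:cc-gaussian-flip-probabilities} applies and gives
\[
p_t^{\gB}=\Phi(-\sqrt bK')+o(1),\qquad q_t^{\gR}=1-p_t^{\gR}=\Phi(-\sqrt bK')+o(1).
\]
Lemma~\ref{lem:conditional-expectations-next-day} then gives
\[
\E[|\gR_{t+1}|\mid\rvy_t]=(|\gR_t|+|\gB_t|)\,q_{K'}+o(N)=q_{K'}N+o(N),
\]
which is at most $\tfrac12(q_{K'}+r)N$ for large $N$.

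\textbf{Concentration.} This is the main obstacle, because the indicators are not independent: each edge of $\gG_{t+1}$ affects both of its endpoints. I would view $|\gR_{t+1}|$ as a function of the $\binom N2$ independent edge indicators. Toggling one edge changes at most two vertex decisions, so the function is $2$-Lipschitz. Plain McDiarmid with $\binom N2$ coordinates gives only a bound of the form $\exp(-c\,\epsilon^2N^2/N^2)$, which is useless. I would therefore use a bounded-differences inequality with a variance proxy, such as Freedman's martingale inequality on the edge-exposure martingale or Talagrand/Bernstein-type concentration. In that approach each edge is present with probability $p$, so the predictable quadratic variation is $O(N^2p)=O(N\log N)$. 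A deviation of $\epsilon N$ then has probability at most
\[
\exp\!\left(-c\min\left\{\frac{\epsilon^2N^2}{N\log N},\,\epsilon N\right\}\right)=\exp\!\left(-cN/\log N\right),
\]
which is far below $\exp(-(\log N)^2)$. The factor $2$ in the statement would absorb the two-sided bound or a union over the two camps' sums if I instead split them.

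Combining the mean bound with a deviation of $\tfrac12(r-q_{K'})N$ then yields $|\gR_{t+1}|\le rN$ with the stated probability. If the martingale variance computation proves delicate, my fallback is to condition on the degrees being $O(\log N)$, which fails only with probability $N^{-\omega(1)}$. Since that alone is too weak for an $\exp(-(\log N)^2)$ bound, I would instead apply a Poissonized, vertex-exposure argument with Lipschitz constant $O(\log N)$ on a typical set, using Kim–Vu or a typical-bounded-differences inequality.
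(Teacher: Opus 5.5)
Your proof is correct, but it takes a different route from the paper in both steps.

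\textbf{Mean bound.} The paper sets $\rho=(q_K+r)/2$ and splits into two cases.
\begin{itemize}
\item If $|\gR_t|\le\rho N$, Lemma~\ref{lem:one-day-extinction} at density $\rho$ gives $\E[|\gR_{t+1}|\mid\rvy_t]\le 2N^{1-\xi_\rho}=o(N)$. This is exactly the expectation-level use of the extinction lemma that you set aside.
\item If $|\gR_t|>\rho N$, both camps are linear. Corollary~\ref{cor:gaussian-flip-probabilities} is uniform in the standardized mean, so together with monotonicity of $\Phi$ it gives $q_t^{\gR},p_t^{\gB}\le q_K+o(1)$ for every $\Delta_t\ge KN/\sqrt{\log N}$, however large.
\end{itemize}
Your monotone coupling replaces this case split, and it is valid. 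With ties retaining the current opinion, the update is coordinatewise monotone in $\rvy_t$ for a fixed graph. The law of $|\gR_{t+1}|$ also depends only on the camp sizes. So you may reduce to the smallest parity-admissible $\Delta_t\ge KN/\sqrt{\log N}$, and you can simply take $K'=K$. This makes the argument self-contained: it needs only the bounded-window Lemma~\ref{lem:cc-gaussian-flip-probabilities} and no large-deviation input. The cost is that you must justify the coupling.

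\textbf{Concentration.} The paper applies the read-$2$ bound $2\exp(-u^2/N)$ of Lemma~\ref{lem:camp-size-read-two-concentration} with $u=\sqrt N\log N$. That choice is where the $2\exp(-(\log N)^2)$ in the statement comes from. Your Freedman argument on the edge-exposure martingale also suffices: the increments are $(e_k-p)g_k$ with $|g_k|\le 2$, and the predictable variation is at most $2pN^2=2bN\log N$. This gives $2\exp(-cN/\log N)$ at linear deviations.

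Its variance proxy loses a factor $\log N$ against the read-$2$ bound. That loss is harmless here, but it would break Lemma~\ref{lem:stepwise-advantage-amplification}. There the deviations are only of order $\sqrt{\log N}\,\Delta_t$, and Freedman yields a failure bound $\exp(-c\Delta_t^2/N)$ instead of $\exp(-c\Delta_t^2\log N/N)$. At $\Delta_t\asymp\sqrt N/h_N$, that bound tends to $1$ rather than to $0$.

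Your Kim--Vu fallback is unnecessary.
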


\subsection{Proof of Theorem~\ref{thm:er-two-day-unanimity}}
The preceding lemmas form a nested constant-time mechanism. Lemma~\ref{lem:one-day-extinction} eliminates a sufficiently small red camp in one update, while Lemma~\ref{lem:one-step-reduction} reaches that extinction region in one update from a sufficiently large advantage. We now combine these two steps to prove Theorem~\ref{thm:er-two-day-unanimity}.

\begin{proof}[Proof of Theorem~\ref{thm:er-two-day-unanimity}]
We first prove one-day unanimity. Fix \(0<r<r_*\) and assume
\eqref{eqn:er-one-day-unanimity-condition}. By
\eqref{eqn:camp-sizes-from-advantage-intro},
\[
|\gR_0|
=
\frac{N-\Delta_0}{2}
\le
rN.
\]
Thus \eqref{eqn:one-day-extinction-condition} holds at time \(t=0\).
Lemma~\ref{lem:one-day-extinction} gives a constant
\(\xi=\xi(b,r)>0\) such that
\[
\P(\gR_1\neq\emptyset\mid\rvy_0)
\le
2N^{-\xi}.
\]
Taking complements proves part~\textup{(ii)}.

We then prove the two-day unanimity case. Fix \(K>K_2^{\mathrm{ER}}(b)\) and assume
\eqref{eqn:er-two-day-unanimity-condition}. Since \(q_K<r_*\), we choose \(r\) as
\[
r
\coloneqq
\frac{q_K+r_*}{2},
\]
which satisfies \eqref{eqn:one-step-extinction-density-condition}. Define the event
\[
\mathcal E_1
\coloneqq
\{|\gR_1|\le rN\}.
\]
The hypothesis \eqref{eqn:er-two-day-unanimity-condition} is precisely
\eqref{eqn:one-step-reduction-condition} at time \(t=0\). Hence
Lemma~\ref{lem:one-step-reduction} gives
\[
\P(\mathcal E_1^c\mid\rvy_0)
\le
2\exp\left(-\left(\log N\right)^2\right).
\]
On the event \(\mathcal E_1\), we have
\[
\Delta_1
=
N-2|\gR_1|
\ge
(1-2r)N.
\]
Because the graph used for the update from time \(1\) to time \(2\) is
freshly resampled independently of \(\rvy_1\), the one-day result proved
above applies conditionally at time \(t=1\). Hence there is a constant
\(\xi=\xi(b,r)>0\) such that
\[
\P(\gR_2\neq\emptyset\mid\rvy_1)
\le
2N^{-\xi}
\qquad\text{on }\mathcal E_1.
\]
Taking conditional expectations with respect to \(\rvy_1\) and separating
according to \(\mathcal E_1\), we obtain
\begin{align*}
\P(\gR_2\neq\emptyset\mid\rvy_0)
&\le
\P(\mathcal E_1^c\mid\rvy_0)
+
\E\left[
\indi{\mathcal E_1}
\P(\gR_2\neq\emptyset\mid\rvy_1)
\,\middle|\,
\rvy_0
\right]\\
&\le
2\exp\left(-\left(\log N\right)^2\right)
+
2N^{-\xi}.
\end{align*}
Because the chosen \(r\) depends only on \(b\) and \(K\), we may write
\(\xi=\xi(b,K)\). Taking complements proves part~\textup{(i)} and completes
the proof.
\end{proof}

\section{Logarithmic Days To Unanimity}
\label{sec:polylogarithmic-days-to-unanimity}
We present the proof of Theorem~\ref{thm:er-polylogarithmic-days-to-unanimity} in this section. The proof proceeds by first upgrading the one-step estimates from Section~\ref{sec:one-step-evolution-advantage} into a high-probability amplification bound. We then iterate this bound until the advantage reaches the constant-time regime of Theorem~\ref{thm:er-two-day-unanimity}, thereby proving Theorem~\ref{thm:er-polylogarithmic-days-to-unanimity} and Corollary~\ref{cor:er-polylogarithmic-days-to-unanimity-asymptotic}. The one-step failure probabilities are summable because every successful update multiplies both the advantage and the exponent in the next concentration bound.

\subsection{Stepwise advantage amplification}
We first present the stepwise advantage amplification lemma, with its proof deferred to Section~\ref{sec:deferred-Proofs-polylogarithmic-days-to-unanimity}.

\begin{lemma}[Stepwise advantage amplification]
\label{lem:stepwise-advantage-amplification}
For every fixed \(K>0\), there are constants
\(c_1=c_1(b,K)>0\), \(C_2=C_2(b,K)>0\), and \(c=c(b,K)>0\)
such that, whenever
\begin{align}
\sqrt{N/\log N}\le\Delta_t\le KN/\sqrt{\log N},
\label{eqn:stepwise-advantage-amplification-condition}
\end{align}
the following two-sided amplification bound holds:
\begin{align}
\P\left(
 c_1\sqrt{\log N}\,\Delta_t
 \le \Delta_{t+1}
 \le C_2\sqrt{\log N}\,\Delta_t
\,\middle|\,\rvy_t\right)
\ge
1-2\exp\left(-c\frac{\Delta_t^2\log N}{N}\right).
\label{eqn:first-day-two-sided-amplification}
\end{align}
\end{lemma}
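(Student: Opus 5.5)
We prove Lemma~\ref{lem:stepwise-advantage-amplification} by controlling the conditional mean of \(\Delta_{t+1}\) through Lemmas~\ref{lem:conditional-expectations-next-day} and~\ref{lem:cc-gaussian-flip-probabilities}, and then showing that \(\Delta_{t+1}\) concentrates around this mean via a Hoeffding bound.

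\emph{Step 1: the hypotheses of Lemma~\ref{lem:cc-gaussian-flip-probabilities} hold.} Under \eqref{eqn:stepwise-advantage-amplification-condition} we have \(\Delta_t\le KN/\sqrt{\log N}=o(N)\), so both camps have size at least \(N/3\) for large \(N\); take \(c_0=1/3\). By Lemma~\ref{lem:expected-variance-degree-differences},
\[
|x_t^{\gR}|,\ |x_t^{\gB}|\;\le\; \frac{p(\Delta_t+1)}{\sqrt{(N-1)p(1-p)}}\;\lesssim\; \frac{\sqrt{b\log N}\,\Delta_t}{N}\;\le\; \sqrt b\,K+o(1),
\]
so \eqref{eqn:standardized-degree-differences} holds with \(M=\sqrt b K+1\). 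Lemma~\ref{lem:cc-gaussian-flip-probabilities} then applies with constants depending only on \(b\) and \(K\).

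\emph{Step 2: two-sided bounds on the conditional mean.} Write \(\E[\Delta_{t+1}\mid\rvy_t]=N(p_t^{\gR}-p_t^{\gB})+\Delta_t(1-p_t^{\gR}-p_t^{\gB})\) as in \eqref{eq:advantage-expectation-expanded}. Since \(p\Delta_t/\sqrt{(N-1)p(1-p)}\asymp \Delta_t\sqrt{\log N}/N\), the bound \eqref{eq:cc-gaussian-flip-difference} gives
\[
c'\sqrt{\log N}\,\Delta_t\;\le\; N\bigl(p_t^{\gR}-p_t^{\gB}\bigr)\;\le\; C'\sqrt{\log N}\,\Delta_t .
\]
The second term satisfies \(|\Delta_t(1-p_t^{\gR}-p_t^{\gB})|\le\Delta_t\), which is lower order compared with \(\sqrt{\log N}\,\Delta_t\). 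Hence, for large \(N\),
\[
2c_1\sqrt{\log N}\,\Delta_t\;\le\;\E[\Delta_{t+1}\mid\rvy_t]\;\le\;\tfrac12 C_2\sqrt{\log N}\,\Delta_t,
\]
with \(c_1=c'/4\) and \(C_2=4C'\).

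\emph{Step 3: concentration.} Conditional on \(\rvy_t\), the new opinions are not independent across vertices, because they share edges of \(\gG_{t+1}\). So instead of a sum of independent indicators, view \(\Delta_{t+1}\) as a function of the \(\binom N2\) independent edge indicators. Flipping a single edge \(\{u,w\}\) changes only the opinions of \(u\) and \(w\), so it changes \(\Delta_{t+1}\) by at most \(4\). McDiarmid's inequality then gives the tail bound \(2\exp(-2s^2/(16\binom N2))\), which is useless at the scale needed here, since \(s\) is only of order \(\sqrt{\log N}\,\Delta_t\ll N\).

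The remedy is to expose the edges vertex by vertex. Orient each pair so that vertex \(v\) owns its edges to vertices \(w>v\), and consider the Doob martingale that reveals the owned edge sets one vertex at a time. This block is still not a bounded-difference decomposition, because revealing \(v\)'s block can change up to \(\deg(v)\) other opinions. The increments can, however, be controlled on the event that all degrees are \(O(\log N)\), which has probability at least \(1-N^{-\omega(1)}\) after using Chernoff truncated at \(C\log N\) with \(C\) large. Even so, a change of \(O(\log N)\) per step yields only the exponent \(s^2/(N\log^2N)\), not the required \(s^2/N\).

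\emph{Main obstacle.} The crux is obtaining the sharp exponent. With \(s\asymp\sqrt{\log N}\,\Delta_t\), we need \(s^2/N\asymp\Delta_t^2\log N/N\), which is exactly the rate claimed in \eqref{eqn:first-day-two-sided-amplification}. I would attempt it by decoupling. Each vertex's flip depends on \(\mathrm D\approx\) a sum of about \(d\) terms, and changing a single edge moves the flip probability of a given vertex by only \(O(1/\sqrt{\log N})\), namely the local density \(O(1/\sqrt v)\) of \(\mathrm D\) near zero. A martingale built on conditional expectations (a Freedman-type bound) would then have increments whose conditional variance is \(O(\deg\cdot 1/\log N)=O(1)\) per vertex. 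That gives total variance \(O(N)\) and, by Freedman, the tail \(2\exp(-c\,s^2/N)\) in the range \(s\lesssim N\). If this route proves too delicate, a fallback is to split the vertex set randomly into two halves, so that the flips within one half are approximately independent given the edges incident to the other half, and then apply Hoeffding to each half.

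Assuming the concentration bound, take \(s=c_1\sqrt{\log N}\,\Delta_t\). Since \(s\le\tfrac12\E[\Delta_{t+1}\mid\rvy_t]\) by Step 2, the event \(|\Delta_{t+1}-\E[\Delta_{t+1}\mid\rvy_t]|<s\) forces \(\Delta_{t+1}\) into \([c_1\sqrt{\log N}\,\Delta_t,\;C_2\sqrt{\log N}\,\Delta_t]\). This yields \eqref{eqn:first-day-two-sided-amplification} with \(c=c(b,K)\).
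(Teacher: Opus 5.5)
Your Steps 1 and 2 and the final bookkeeping are correct and match the paper's argument. The genuine gap is Step 3. A Hoeffding-type bound for $\Delta_{t+1}$ with variance proxy of order $N$ is the entire probabilistic content of the lemma, and you end by assuming it.

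The Freedman sketch does not deliver it. Your estimate that one edge moves a vertex's flip probability by $O(1/\sqrt{\log N})$ is unconditional. In an exposure martingale, what matters is the influence conditional on the edges already revealed. Once most edges at $v$ are exposed, a remaining edge can be pivotal with conditional probability of order one. The predictable quadratic variation is therefore random and of order $N$ only on average. Controlling it with failure probability $\exp(-c\Delta_t^2\log N/N)$ is a concentration problem of the same type as the original one.

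The clean Bernstein form of bounded differences does not suffice either. There the conditional increment variance is at most $16p$ per edge, so the variance proxy is $\asymp pN^2\asymp N\log N$, giving only $\exp(-c\Delta_t^2/N)$. That loses exactly the factor $\log N$ in the exponent and is vacuous near $\Delta_t\asymp\sqrt{N/\log N}$. The proof of Theorem~\ref{thm:er-polylogarithmic-days-to-unanimity} needs $\Delta_0^2\log N/N\to\infty$ to sum the failure probabilities. The random-halves fallback also fails, since vertices in the same half still share the edges among them.

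The missing tool is the read-$k$ Chernoff bound of \cite{gavinsky2015tail} (Lemma~\ref{lem:read-k}). You had already isolated the structural fact it needs: an edge affects only the colors of its two endpoints. Conditional on $\rvy_t$, the indicator $\indi{v\in\gR_{t+1}}$ is a Boolean function of the edge indicators of $\gG_{t+1}$ at $v$ alone. Each edge indicator is read by exactly two of these $N$ functions, so they form a read-$2$ family. Corollary~\ref{cor:read-k-chernoff-bound-sum} with $r=N$ and $k=2$ then gives Hoeffding's bound as if the indicators were independent, up to the factor $k$:
\[
\P\left(\bigl||\gR_{t+1}|-\E[|\gR_{t+1}|\mid\rvy_t]\bigr|\ge s\,\middle|\,\rvy_t\right)\le 2\exp\left(-\frac{s^2}{N}\right).
\]
Since $\Delta_{t+1}=N-2|\gR_{t+1}|$, this becomes $2\exp(-u^2/(4N))$ for $\Delta_{t+1}$, which is Lemma~\ref{lem:camp-size-read-two-concentration}. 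Taking $u=c_1\sqrt{\log N}\,\Delta_t$ and combining with your Step 2 yields \eqref{eqn:first-day-two-sided-amplification} with $c=c_1^2/4$, which is exactly how the paper concludes.
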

The main idea of the proof of Lemma~\ref{lem:stepwise-advantage-amplification} can be summarized as follows: First, Lemma~\ref{lem:expected-variance-degree-differences} places the standardized
degree differences in a bounded Gaussian window. Second, Lemma~\ref{lem:cc-gaussian-flip-probabilities} and Lemma~\ref{lem:conditional-expectations-next-day} show that the conditional mean of the next advantage is of order \(\sqrt{\log N}\,\Delta_t\). Finally, we apply the read-2 concentration inequality in Lemma~\ref{lem:camp-size-read-two-concentration} to upgrade this mean estimate to a high-probability two-sided amplification bound. The detailed proof is deferred to Section~\ref{sec:deferred-Proofs-polylogarithmic-days-to-unanimity}.

\subsection{Proof of Theorem~\ref{thm:er-polylogarithmic-days-to-unanimity}}
\begin{proof}[Proof of Theorem~\ref{thm:er-polylogarithmic-days-to-unanimity}]
Let \(c_1,C_2,c>0\) be the constants from the preceding
Lemma~\ref{lem:stepwise-advantage-amplification}, and define
\begin{align}
a_N&\coloneqq c_1\sqrt{\log N},
&
A_N&\coloneqq C_2\sqrt{\log N},
&
\lambda_N&\coloneqq \frac{\Delta_0^2\log N}{N}.
\label{eqn:polylog-a-lambda}
\end{align}
Condition~\eqref{eqn:er-polylog-initial-advantage-condition} gives
\begin{align}
\lambda_N
\ge
\frac{\log N}{h_N^2}
\longrightarrow
\infty,
\label{eqn:polylog-lambda-diverges}
\end{align}
because \(h_N=o(\sqrt{\log N})\). In particular, for all sufficiently large
\(N\),
\[
a_N>1,
\qquad
A_N>1,
\qquad
(a_N)^2\ge2,
\qquad
\Delta_0\ge\sqrt{N/\log N}.
\]
For both the lower and upper bounds, we will use the following geometric estimate: for every integer \(m\ge0\), with the sum understood as zero when \(m=0\),
\begin{align}
2\sum_{t=0}^{m-1}
\exp\left(-c\lambda_N(a_N)^{2t}\right)
&\le
2\sum_{t=0}^{\infty}
\exp\left(-c\lambda_N(t+1)\right) =
\frac{2\exp(-c\lambda_N)}
{1-\exp(-c\lambda_N)}
\le
4\exp(-c\lambda_N),
\label{eqn:polylog-geometric-failure-bound}
\end{align}
where we used \((a_N)^{2t}\ge2^t\ge t+1\) and \eqref{eqn:polylog-lambda-diverges}.

We first prove the lower bound on the time to unanimity. If
\(\underline{T}=0\), then \(\gR_0\ne\emptyset\) gives
\(\tau_{\mathrm B}\geq1=\underline{T}+1\), so the desired bound is
immediate. Assume therefore that \(\underline{T}\geq1\). For \(m\ge0\), define
\begin{align}
\mathcal E_m^-
\coloneqq
\bigcap_{s=0}^{m-1}
\left\{
a_N\Delta_s\le\Delta_{s+1}\le A_N\Delta_s
\right\}.
\label{eqn:polylog-lower-good-event}
\end{align}
Here and below, an intersection over an empty index set is the sure event;
in particular, \(\mathcal E_0^-\) is the sure event. By
\eqref{eqn:polylog-lower-good-event}, on \(\mathcal E_t^-\) every update
from time \(0\) through time \(t-1\) satisfies the two-sided amplification
inequality. Consequently,
\begin{align}
(a_N)^t\Delta_0
\le
\Delta_t
\le
(A_N)^t\Delta_0.
\label{eqn:polylog-two-sided-pathwise-amplification}
\end{align}
For every \(0\le t<\underline{T}\), the definition of
\(\underline{T}\) gives
\[
(A_N)^t\Delta_0
\le
(A_N)^{\underline{T}}\Delta_0
\le
K\frac{N}{\sqrt{\log N}}.
\]
Together with \eqref{eqn:polylog-two-sided-pathwise-amplification} and the
initial condition, this shows that, on \(\mathcal E_t^-\),
\[
\sqrt{\frac{N}{\log N}}
\le
\Delta_t
\le
K\frac{N}{\sqrt{\log N}}.
\]
Thus the amplification lemma applies conditionally on \(\mathcal F_t\).
Moreover, the definition in \eqref{eqn:polylog-lower-good-event} gives
\begin{align*}
(\mathcal E_{t+1}^-)^{\complement}\cap\mathcal E_t^-
=
\mathcal E_t^-
\cap
\left(
\{\Delta_{t+1}<a_N\Delta_t\}
\cup
\{\Delta_{t+1}>A_N\Delta_t\}
\right).
\end{align*}
Using both sides of
\eqref{eqn:first-day-two-sided-amplification} and the tower property, we
therefore obtain, for every \(0\le t<\underline{T}\),
\begin{align}
\P\left(
(\mathcal E_{t+1}^-)^{\complement}\cap\mathcal E_t^-
\,\middle|\,
\rvy_0
\right) =&\,
\E\left[
\indi{\mathcal E_t^-}
\P\left(
\Delta_{t+1}\notin[a_N\Delta_t,A_N\Delta_t]
\,\middle|\,
\mathcal F_t
\right)
\,\middle|\,
\rvy_0
\right]\notag\\
\leq &\,
2\exp\left(
-c\frac{((a_N)^t\Delta_0)^2\log N}{N}
\right)
=
2\exp\left(-c\lambda_N(a_N)^{2t}\right).
\end{align}
By \eqref{eqn:polylog-lower-good-event}, the events
\(\{\mathcal E_m^-\}_{m\ge0}\) are nested. Hence a union bound over the first
failed update, followed by \eqref{eqn:polylog-geometric-failure-bound}, gives
\begin{align}
\P\left((\mathcal E_{\underline{T}}^-)^{\complement}\mid\rvy_0\right)
&\le
2\sum_{t=0}^{\underline{T}-1}
\exp\left(-c\lambda_N(a_N)^{2t}\right)
\le
4\exp(-c\lambda_N).
\end{align}
On the event \(\mathcal E_{\underline{T}}^-\), we have
\[
\Delta_{\underline{T}}
\le
(A_N)^{\underline{T}}\Delta_0
\le
K\frac{N}{\sqrt{\log N}}
<
N
\]
for all sufficiently large \(N\). Hence
\(\gR_{\underline{T}}\ne\emptyset\). Since time is integer-valued and the
all-blue configuration is absorbing, this implies
\(\tau_{\mathrm B}\geq \underline{T}+1\). Together with the
zero-horizon case, this proves
\begin{align}
\P\left(\tau_{\mathrm B}<\underline{T}+1\mid\rvy_0\right)
\le
4\exp(-c\lambda_N).
\label{eqn:polylog-lower-tail-probability}
\end{align}

We then prove the upper bound. Define the random entrance time into the
two-day completion window by
\begin{align}
\tau_K
\coloneqq
\inf\left\{
t\ge0:
\Delta_t\ge K\frac{N}{\sqrt{\log N}}
\right\},
\label{eqn:polylog-entrance-time}
\end{align}
with the convention \(\inf\emptyset=\infty\). For \(m\ge0\), define
\begin{align}
\mathcal E_m
\coloneqq
\bigcap_{s=0}^{m-1}
\left(
\{\tau_K\le s\}
\cup
\{\Delta_{s+1}\ge a_N\Delta_s\}
\right).
\label{eqn:polylog-upper-good-event}
\end{align}
By the empty-intersection convention, \(\mathcal E_0\) is the sure event.
If \(\mathcal E_t\cap\{\tau_K>t\}\) occurs, then \(\tau_K>s\) for every
\(0\le s<t\); hence \eqref{eqn:polylog-upper-good-event} forces every
preceding update to satisfy the lower-amplification inequality. Therefore,
\begin{align}
\Delta_t\ge (a_N)^t\Delta_0.
\label{eqn:polylog-pathwise-amplification}
\end{align}
Moreover, \(\tau_K>t\) gives
\(\Delta_t<K N/\sqrt{\log N}\), while
\eqref{eqn:polylog-pathwise-amplification} and the initial condition give
\(\Delta_t\ge\sqrt{N/\log N}\). Thus the amplification lemma applies
conditionally on \(\mathcal F_t\). Using only the lower side of
\eqref{eqn:first-day-two-sided-amplification} and then the tower property, we
obtain, for every \(0\le t<\overline{T}\),
\begin{align}
\P\left(
\mathcal E_{t+1}^{\complement}\cap\mathcal E_t
\,\middle|\,
\rvy_0
\right) =&\,
\P\left(
\mathcal E_t\cap\{\tau_K>t\}
\cap\{\Delta_{t+1}<a_N\Delta_t\}
\,\middle|\,
\rvy_0
\right)\notag\\
=&\,
\E\left[
\indi{\mathcal E_t\cap\{\tau_K>t\}}
\P\left(
\Delta_{t+1}<a_N\Delta_t
\,\middle|\,
\mathcal F_t
\right)
\,\middle|\,
\rvy_0
\right]\notag\\
\leq &\,
2\exp\left(
-c\frac{((a_N)^t\Delta_0)^2\log N}{N}
\right)
=
2\exp\left(-c\lambda_N(a_N)^{2t}\right).
\end{align}
The definition of \(\overline{T}\) in
\eqref{eqn:er-polylog-sufficient-time} ensures that
\begin{align}
(a_N)^{\overline{T}}\Delta_0
\ge
K\frac{N}{\sqrt{\log N}}.
\label{eqn:polylog-T-threshold}
\end{align}
This also covers \(\overline{T}=0\), in which case the initial
configuration already lies in the completion window. If
\(\mathcal E_{\overline{T}}\) occurs and
\(\tau_K>\overline{T}\), then
\eqref{eqn:polylog-pathwise-amplification} and
\eqref{eqn:polylog-T-threshold} contradict the definition of \(\tau_K\).
Consequently,
\(\{\tau_K>\overline{T}\}
\subseteq\mathcal E_{\overline{T}}^{\complement}\). By
\eqref{eqn:polylog-upper-good-event}, the events
\((\mathcal E_m)_{m\ge0}\) are nested. Hence a union bound over the first
failed update, followed by \eqref{eqn:polylog-geometric-failure-bound}, gives
\begin{align}
\P(\tau_K>\overline{T}\mid\rvy_0)
&\le
2\sum_{t=0}^{\overline{T}-1}
\exp\left(-c\lambda_N(a_N)^{2t}\right)
\le
4\exp(-c\lambda_N),
\label{eqn:polylog-entrance-probability}
\end{align}
where the sum is zero when \(\overline{T}=0\).

It remains to pass from entrance to unanimity. Let
\(\xi=\xi(b,K)>0\) be supplied by
Theorem~\ref{thm:er-two-day-unanimity}. For each deterministic \(s\ge0\), the
graphs sampled after time \(s\) are independent of \(\mathcal F_s\) and have
the same law as the graphs sampled from time zero. On \(\{\tau_K=s\}\), we have
\(\Delta_s\ge K N/\sqrt{\log N}\), so the two-day theorem, shifted to time
\(s\), gives
\[
\P\left(\gR_{s+2}\ne\emptyset\mid\mathcal F_s\right)
\le
2\exp\left(-\left(\log N\right)^2\right)+2N^{-\xi}
\qquad\text{on }\{\tau_K=s\}.
\]
Therefore, another application of the tower property yields
\begin{align}
&\, \P\left(
\tau_K\le \overline{T},\ \gR_{\tau_K+2}\ne\emptyset
\,\middle|\,
\rvy_0
\right)\notag\\
=&\,
\sum_{s=0}^{\overline{T}}
\E\left[
\indi{\{\tau_K=s\}}
\P\left(\gR_{s+2}\ne\emptyset\mid\mathcal F_s\right)
\,\middle|\,
\rvy_0
\right]
\le
2\exp\left(-\left(\log N\right)^2\right)+2N^{-\xi}.
\label{eqn:polylog-completion-probability}
\end{align}
The all-blue configuration is absorbing. Hence
\[
\{\gR_{\overline{T}+2}\ne\emptyset\}
\subseteq
\{\tau_K>\overline{T}\}
\cup
\{\tau_K\le\overline{T},\
\gR_{\tau_K+2}\ne\emptyset\}.
\]
Combining \eqref{eqn:polylog-entrance-probability} and
\eqref{eqn:polylog-completion-probability} gives
\begin{align}
\P\left(\tau_{\mathrm B}>\overline{T}+2\mid\rvy_0\right)
&\le
4\exp(-c\lambda_N)
+2\exp\left(-\left(\log N\right)^2\right)
+2N^{-\xi}.
\label{eqn:polylog-upper-tail-probability}
\end{align}
Finally, a union bound with
\eqref{eqn:polylog-lower-tail-probability} gives
\begin{align*}
&\, \P\left(
\left\{
\underline{T}+1
\le\tau_{\mathrm B}\le
\overline{T}+2
\right\}^{\complement}
\,\middle|\,
\rvy_0
\right)\\
\leq &\,
8\exp\left(-c\frac{\Delta_0^2\log N}{N}\right)
+2\exp\left(-\left(\log N\right)^2\right)
+2N^{-\xi}
=
\eta_N(\Delta_0).
\end{align*}
Taking complements proves
\eqref{eqn:er-polylog-unanimity-time-bracket}.
\end{proof}

\subsection{Proof of Corollary~\ref{cor:er-polylogarithmic-days-to-unanimity-asymptotic}}
\begin{proof}[Proof of
Corollary~\ref{cor:er-polylogarithmic-days-to-unanimity-asymptotic}]
Suppose that
\[
\Delta_0\asymp\frac{\sqrt N}{h_N}.
\]
Since \(h_N\ge1\) and \(h_N=o(\sqrt{\log N})\),
\begin{align*}
\log\left(\frac{KN}{\Delta_0\sqrt{\log N}}\right)
&=
\frac12\log N-\frac12\log\log N+\log h_N+O(1)
=
\frac12\log N+O(\log\log N),\\
\log\left(C_2\sqrt{\log N}\right)
&=
\frac12\log\log N+O(1),\\
\log\left(c_1\sqrt{\log N}\right)
&=
\frac12\log\log N+O(1).
\end{align*}
Substitution into \eqref{eqn:er-polylog-necessary-time} gives
\[
\underline{T}
=
\left(1+o(1)\right)\frac{\log N}{\log\log N}.
\]
Separately, substitution into \eqref{eqn:er-polylog-sufficient-time} gives
\[
\overline{T}
=
\left(1+o(1)\right)\frac{\log N}{\log\log N}.
\]
For every fixed \(0<\delta<1\), these two asymptotics imply, for all
sufficiently large \(N\),
\[
\left\lfloor(1-\delta)\frac{\log N}{\log\log N}\right\rfloor
\leq\underline{T},
\qquad
\overline{T}+2\leq
\left\lceil(1+\delta)\frac{\log N}{\log\log N}\right\rceil.
\]
The all-blue configuration is absorbing, so
\eqref{eqn:er-polylog-unanimity-time-asymptotics} follows from
\eqref{eqn:er-polylog-unanimity-time-bracket}.
This completes the proof.
\end{proof}

\section{Critical-Window Winner Selection}
\label{sec:er-critical-window-winner-selection}

We present the proof of Theorem~\ref{thm:er-critical-window-winner-selection} in this section. The proofs of Lemmas~\ref{lem:critical-window-mean-advantage}, \ref{lem:critical-window-one-step-clt}, and \ref{lem:critical-window-one-step-anticoncentration} are deferred to Section~\ref{sec:deferred-proofs-critical-window-winner-selection} of the Appendix. Throughout the section, we condition on a deterministic initial configuration \(\rvy_0\) satisfying \eqref{eq:critical-window-condition} and work under Assumption~\ref{ass:connectivity}.

The main idea is that the first update selects the sign of the eventual winner according to a Gaussian crossover and that the selected color subsequently reaches unanimity through the amplification mechanism of Theorem~\ref{thm:er-polylogarithmic-days-to-unanimity}. We first provide the three one-step estimates needed for the theorem below.

\subsection{One-step estimates in the critical window}

We begin with the conditional mean of the first-update advantage in the critical window \eqref{eq:critical-window-condition}. 

\begin{lemma}[Critical-window mean advantage]
\label{lem:critical-window-mean-advantage}
For every fixed \(C_0<\infty\), uniformly over deterministic initial
configurations satisfying \eqref{eq:critical-window-condition},
we have
\begin{align}
\frac{\E[\Delta_1\mid\rvy_0]}{\sqrt N}
=
x_0
+O\left(\frac1{\sqrt{\log N}}\right).
\label{eq:critical-window-mean-asymptotic}
\end{align}
The implicit constant depends only on \(b\) and \(C_0\).
\end{lemma}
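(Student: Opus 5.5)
We plan to start from the exact identity of Lemma~\ref{lem:conditional-expectations-next-day} at \(t=0\),
\[
\E[\Delta_1\mid\rvy_0]=N\bigl(p_0^{\gR}-p_0^{\gB}\bigr)+\Delta_0\bigl(1-p_0^{\gR}-p_0^{\gB}\bigr),
\]
and estimate the two terms separately using Lemma~\ref{lem:cc-gaussian-flip-probabilities}. By the symmetry of exchanging blue and red (which maps \(\Delta_0\mapsto-\Delta_0\), \(x_0\mapsto -x_0\), and \(\E[\Delta_1]\mapsto-\E[\Delta_1]\)), it will suffice to treat \(\Delta_0\ge0\).

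First, we check the hypotheses of Lemma~\ref{lem:cc-gaussian-flip-probabilities}. Since \(|\Delta_0|\le C\sqrt{N/\log N}=o(N)\), both camps have size \(N/2+o(N)\ge N/4\), so \eqref{eqn:linear-camp-condition} holds with \(c_0=1/4\). By Lemma~\ref{lem:expected-variance-degree-differences}, \(m_0^{\gR}=p(\Delta_0+1)\), \(m_0^{\gB}=p(1-\Delta_0)\), and \(v_0\asymp\log N\). Hence
\[
|x_0^{\gR}|,|x_0^{\gB}|\lesssim \frac{p|\Delta_0|+p}{\sqrt{\log N}}\lesssim \frac{\log N}{N}\sqrt{\frac{N}{\log N}}\cdot\frac{1}{\sqrt{\log N}}=o(1),
\]
so \eqref{eqn:standardized-degree-differences} holds with, say, \(M=1\).

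For the main term, we apply \eqref{eq:cc-gaussian-critical-flip-difference} with \(\Delta_t=\Delta_0\le C\sqrt{N/\log N}\), where the constant \(C\) comes from \(|x_0|\le C_0\) (the case \(\Delta_0=0\) is covered there too, and gives zero by symmetry). This yields
\[
N\bigl(p_0^{\gR}-p_0^{\gB}\bigr)=\sqrt{\tfrac{2}{\pi}}\frac{Np\Delta_0}{\sqrt{(N-1)p(1-p)}}+O\Bigl(\frac{Np\Delta_0}{\log N}\Bigr).
\]
Since \(Np/\sqrt{(N-1)p(1-p)}=\sqrt{Np}\,(1+O(p+1/N))=\sqrt{b\log N}\,(1+o(N^{-1/2}))\), the leading term equals \(\Delta_0\sqrt{2b\log N/\pi}\,(1+O(\log N/N))=\sqrt N\,x_0+o(1)\). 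The error is \(O(b\Delta_0)=O(\sqrt{N/\log N})\), which becomes \(O(1/\sqrt{\log N})\) after dividing by \(\sqrt N\).

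For the second term, we use only \(|1-p_0^{\gR}-p_0^{\gB}|\le1\), so \(|\Delta_0(1-p_0^{\gR}-p_0^{\gB})|\le|\Delta_0|\le C\sqrt{N/\log N}\). Divided by \(\sqrt N\), this is again \(O(1/\sqrt{\log N})\). Adding the two terms gives \eqref{eq:critical-window-mean-asymptotic}, with constants depending only on \(b\) and \(C_0\) through those of Lemma~\ref{lem:cc-gaussian-flip-probabilities}.

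The main obstacle is the leading-order computation. It rests entirely on the sharp constant \(\sqrt{2/\pi}\) in \eqref{eq:cc-gaussian-critical-flip-difference}, which in turn relies on the continuity-corrected estimates having \(O(1/\log N)\) accuracy. The crude bound \eqref{eq:cc-gaussian-flip-difference} would only give the right order, not the constant. Granting that lemma, what remains is to confirm that every error term is \(O(\Delta_0)\) and hence \(O(\sqrt{N/\log N})\). In particular, the \(\Delta_0\)-linear second term cannot be absorbed into the main term, but it is of this smaller order, and that is why the error in \eqref{eq:critical-window-mean-asymptotic} is \(O(1/\sqrt{\log N})\) rather than smaller.
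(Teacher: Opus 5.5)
Your proof is correct and follows essentially the paper's route: the expanded identity \eqref{eq:advantage-expectation-expanded}, the reduction to \(\Delta_0\ge0\), and the sharp estimate \eqref{eq:cc-gaussian-critical-flip-difference} for the main term. The only difference concerns the second term. The paper also shows \(p_0^{\gR}+p_0^{\gB}=1+O((\log N)^{-1/2})\), which makes that term \(O((\log N)^{-1})\) after dividing by \(\sqrt N\). Your trivial bound \(|1-p_0^{\gR}-p_0^{\gB}|\le1\) is still enough, because the remainder in \eqref{eq:cc-gaussian-critical-flip-difference} is already of order \((\log N)^{-1/2}\). Contrary to your closing remark, it is that remainder, not the \(\Delta_0\)-linear term, that limits the rate.
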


To upgrade the one-step estimate to a uniform approximation, we follow the approach in \cite{berkowitz2020central} to introduce the configuration-dependent factor and the two uniform approximation errors that appear in Theorem~\ref{thm:er-critical-window-winner-selection}. For a deterministic initial configuration \(\rvy_0\), let \(\rW_0^{\gR}\) and \(\rW_0^{\gB}\) be independent auxiliary random variables with the following
distributions:
\[
\rW_0^{\gR}\sim\Bin(|\gR_0|,p),
\qquad\text{and}\qquad
\rW_0^{\gB}\sim\Bin(|\gB_0|,p).
\]
The corresponding normalization factor, introduced in \cite{berkowitz2020central}, is defined as
\begin{align}
\chi_N(\rvy_0)
&\coloneqq
\P(\rW_0^{\gR}\ge\rW_0^{\gB}) \cdot \P(\rW_0^{\gR}\le\rW_0^{\gB}).
\label{eq:critical-window-chi-definition}
\end{align}
For \(C_0<\infty\), the centered approximation error, uniform over the critical window \(|x_0|\le C_0\), is
\begin{align}
\eta_N^{\mathrm{BD}}(C_0)
\coloneqq
\sup_{\substack{\rvy_0\ \mathrm{deterministic}:\\
|x_0|\le C_0}}
\sup_{z\in\R}
\left|
\P\left(
\frac{\Delta_1-\E[\Delta_1\mid\rvy_0]}
{2\sqrt{N\chi_N(\rvy_0)}}
\le z
\,\middle|\,
\rvy_0
\right)
-\Phi(z)
\right|.
\label{eq:critical-window-berkowitz-devlin-error}
\end{align}
The corresponding uniform error after restoring the original centering and
normalization is
\begin{align}
\rho_N(C_0)
\coloneqq
\sup_{\substack{\rvy_0\ \mathrm{deterministic}:\\
|x_0|\le C_0}}
\sup_{z\in\R}
\left|
\P\left(
\frac{\Delta_1}{\sqrt N}\le z
\,\middle|\,
\rvy_0
\right)
-
\Phi\left(
z-x_0
\right)
\right|.
\label{eq:critical-window-one-step-clt}
\end{align}

\begin{lemma}[Uniform critical-window one-step CLT]
\label{lem:critical-window-one-step-clt}
For every fixed \(C_0<\infty\),
\(\eta_N^{\mathrm{BD}}(C_0)=o(1)\). Moreover, there exists
\(C=C(b,C_0)>0\) such that, for all sufficiently large \(N\),
\begin{align}
\rho_N(C_0)
\le
\eta_N^{\mathrm{BD}}(C_0)
+\frac{C}{\sqrt{\log N}}
=o(1).
\label{eq:critical-window-one-step-clt-error-bound}
\end{align}
\end{lemma}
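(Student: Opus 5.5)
The plan is to treat the two claims separately. First I would prove the centered CLT, \(\eta_N^{\mathrm{BD}}(C_0)=o(1)\). Then I would derive the bound on \(\rho_N(C_0)\) from it by a deterministic comparison of Gaussian distribution functions. For that comparison I would use Lemma~\ref{lem:critical-window-mean-advantage} for the centering and a direct estimate on \(\chi_N(\rvy_0)\) for the scaling.

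\emph{Step 1 (centered CLT).} Write \(\Delta_1=\sum_{v\in\gV}\rY_v\), where \(\rY_v=\ervy_1(v)\in\{\pm1\}\). Conditionally on \(\rvy_0\), each \(\rY_v\) is a function of the edges of \(\gG_1\) incident to \(v\). Hence \(\rY_u\) and \(\rY_v\) share only the single edge \(\{u,v\}\), so the family \((\rY_v)\) is a read-2 family of edge-indicator functionals. I would follow the scheme of \cite{berkowitz2020central}.
\begin{itemize}
\item Compute \(\Var(\Delta_1\mid\rvy_0)\). The diagonal terms contribute about \(N\). Each pair \(u,v\) contributes a covariance of order \(p\) times the product of the tie-sensitivities of \(\rY_u\) and \(\rY_v\) to the edge \(uv\). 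Each such sensitivity is a point probability of a degree difference, of order \((\log N)^{-1/2}\).
\item Summing the pair covariances over \(N^2\) pairs gives a contribution of order \(N\). This is the one that is absorbed into the normalization \(4N\chi_N(\rvy_0)\), exactly as in \cite{berkowitz2020central}. Uniformly in the window \(|x_0|\le C_0\), the target is \(\Var(\Delta_1\mid\rvy_0)=4N\chi_N(\rvy_0)(1+o(1))\).
\item Prove asymptotic normality by Stein's method with the local dependency structure. Concretely, I would use an exchangeable pair obtained by resampling one uniformly chosen edge of \(\gG_1\). Resampling edge \(uv\) changes \(\Delta_1\) by at most \(4\), and only through \(\rY_u\) and \(\rY_v\). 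The linearity condition and the conditional-variance condition then reduce to second-moment computations of the same type as the variance computation. In these, the binomial point probabilities are handled via the Poisson coupling of Section~\ref{sec:poisson-proof-cc-gaussian}, which contributes coupling error \(O((\log N)^2/N)\).
\item Since all estimates depend on \(\rvy_0\) only through \(|\gB_0|\), and this ranges over a compact window in the \(x_0\)-scale, the resulting Kolmogorov bound is uniform.
\end{itemize}

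\emph{Step 2 (estimate on \(\chi_N\)).} With \(\rW_0^{\gR},\rW_0^{\gB}\) as in \eqref{eq:critical-window-chi-definition}, write \(P_>=\P(\rW_0^{\gR}>\rW_0^{\gB})\), \(P_<=\P(\rW_0^{\gR}<\rW_0^{\gB})\) and \(P_==\P(\rW_0^{\gR}=\rW_0^{\gB})\). Then \(\chi_N=(P_>+P_=)(P_<+P_=)\). Under \eqref{eq:critical-window-condition}, the mean difference is \(p|\Delta_0|=O(1)\) while the standard deviation is of order \(\sqrt{\log N}\). By Lemma~\ref{lem:cc-gaussian-flip-probabilities}, or a local limit theorem, this gives \(P_>,P_<=\tfrac12+O((\log N)^{-1/2})\) and \(P_==O((\log N)^{-1/2})\). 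Hence \(2\sqrt{\chi_N(\rvy_0)}=1+O((\log N)^{-1/2})\), uniformly over the window.

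\emph{Step 3 (passing to \(\rho_N\)).} Set \(m=\E[\Delta_1\mid\rvy_0]/\sqrt N\) and \(a=2\sqrt{\chi_N}\). Then
\[
\P\bigl(\Delta_1/\sqrt N\le z\mid\rvy_0\bigr)=\Phi\bigl((z-m)/a\bigr)+O\bigl(\eta_N^{\mathrm{BD}}(C_0)\bigr).
\]
By Lemma~\ref{lem:critical-window-mean-advantage}, \(|m-x_0|=O((\log N)^{-1/2})\), so \(|\Phi((z-m)/a)-\Phi((z-x_0)/a)|\le C(\log N)^{-1/2}\). For the scaling, the mean value theorem gives \(|\Phi(w/a)-\Phi(w)|\le|1/a-1|\sup_{s}|s\varphi(s)|\cdot C\) with \(w=z-x_0\). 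This is \(O((\log N)^{-1/2})\) uniformly in \(w\), because \(a\) stays in a compact interval around \(1\). Taking suprema over \(z\) and over configurations yields \eqref{eq:critical-window-one-step-clt-error-bound}.

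The main obstacle is Step 1: obtaining the variance asymptotics and the Stein bound uniformly in this sparse regime. The pairwise covariances are individually tiny but collectively of order \(N\), so the normalization must capture them exactly through \(\chi_N\). I would first try to reuse the computation of \cite{berkowitz2020central} verbatim, replacing their dense local estimates by the Poisson/continuity-corrected point probabilities available here.
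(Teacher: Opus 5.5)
\textbf{Steps 2 and 3 are fine; Step 1 has a genuine gap.} Your Steps 2 and 3 are the paper's argument. The paper also shows $\chi_N(\rvy_0)=1/4+O((\log N)^{-1/2})$, using Lemma~\ref{lem:gaussian-sparse-binomial-difference} at $k\in\{-1,0\}$. It then restores the centering via Lemma~\ref{lem:critical-window-mean-advantage}, and the scaling $2\sqrt{\chi_N}=1+O((\log N)^{-1/2})$, by Lipschitz and dilation bounds for $\Phi$.

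\textbf{Why Step 1 fails as designed.} Let $X$ be the edge indicators of $\gG_1$, $M=N(N-1)/2$, $Y_v=\ervy_1(v)$, and $F=\Delta_1=\sum_S F_S$ its Hoeffding decomposition in the edges.
\begin{itemize}
\item For your single-edge resampling pair, $\E[F-F'\mid X]=M^{-1}\sum_S|S|F_S$ exactly. So linearity requires $F$ to sit essentially on one Hoeffding level.
\item $Y_v$ is a majority threshold of the $N-1$ edges at $v$, and each edge is pivotal with probability $\asymp\sigma_N^{-1}$. Hence only about $2/\pi$ of $\Var(Y_v)$ lies at level one, while $\sum_S|S|\Var(Y_{v,S})=\sum_{e\ni v}4p(1-p)\P(e\mbox{ pivotal})\asymp\sigma_N$.
\item Two stars share one edge, so the level-$\ge2$ parts of different $Y_v$ are orthogonal and the same spread persists for $\Delta_1$. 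Thus $\E(F-F')^2=2M^{-1}\sum_S|S|\Var(F_S)\asymp\sigma_N N/M$, while $\Var(F)\asymp N$.
\item With $\lambda=1/M$, the condition $\E[(W'-W)^2\mid W]\approx2\lambda$ fails by a factor $\asymp\sigma_N\asymp\sqrt{\log N}$.
\item With $\lambda\asymp\sigma_N/M$, the remainder on the level-one part (carrying $\approx2/\pi$ of the variance) is as large as $\lambda W$.
\end{itemize}
So no $\lambda$ makes the graph-level remainder small. Conditioning only on $W$ would require the law of $\sum_S|S|W_S$ given $W$. That is not a second-moment computation, and it is essentially as hard as the CLT itself.

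Your variance heuristic is also off. Each pair covariance is $p(1-p)c_uc_v$, where the sensitivities have the same sign for same-color pairs and opposite signs for cross-color pairs. The two order-$N$ families therefore cancel to leading order, and $4N\chi_N\approx N$ comes from the diagonal.

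\textbf{What the paper does instead.} No new CLT is needed. Theorem~\ref{thm:berkowitz-devlin-one-step-clt} has no density restriction: it only needs $\sigma_N\to\infty$ and $\log(1/\chi_N)=o(\log\sigma_N)$. Here $\sigma_N\sim\sqrt{b\log N}$, and your Step 2 gives $\chi_N\to1/4$.

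The paper then proceeds as follows:
\begin{itemize}
\item It applies the theorem along an arbitrary sequence of admissible configurations.
\item It passes to $\Delta_1=N-2|\gR_1|$, which is a sign flip against a symmetric limit.
\item It upgrades to Kolmogorov convergence, since $\Phi$ is continuous.
\item It gets $\eta_N^{\mathrm{BD}}(C_0)=o(1)$ by contradiction. Otherwise one could pick admissible configurations at Kolmogorov distance at least $\varepsilon$ for infinitely many $N$, contradicting the sequential CLT.
\end{itemize}
This subsequence argument is what yields uniformity. Your remark that uniformity follows because everything depends on $|\gB_0|$ in a compact window applies only to an explicit quantitative bound. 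The method-of-moments Berkowitz--Devlin theorem does not provide one.
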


We then provide the uniform anti-concentration estimate for \(\Delta_1\).

\begin{lemma}[Uniform critical-window anti-concentration]
\label{lem:critical-window-one-step-anticoncentration}
Fix \(C_0<\infty\). For every deterministic positive sequence
\(\ell_N\to0\), uniformly over deterministic initial configurations
satisfying \eqref{eq:critical-window-condition},
\begin{align}
\sup_{u\in\R}
\P\left(
u\sqrt N\le\Delta_1\le(u+\ell_N)\sqrt N
\,\middle|\,
\rvy_0
\right)
\le
\frac{\ell_N}{\sqrt{2\pi}}+2\rho_N(C_0)
=o(1).
\label{eq:critical-window-one-step-anticoncentration}
\end{align}
\end{lemma}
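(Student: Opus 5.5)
The plan is to derive the anti-concentration bound directly from the uniform one-step CLT in Lemma~\ref{lem:critical-window-one-step-clt}, by comparing the distribution function of \(\Delta_1/\sqrt N\) with the shifted Gaussian \(\Phi(\cdot-x_0)\) at the two endpoints of the interval. Fix a deterministic configuration with \(|x_0|\le C_0\) and write \(F(z)\coloneqq\P(\Delta_1/\sqrt N\le z\mid\rvy_0)\). By the definition \eqref{eq:critical-window-one-step-clt} of \(\rho_N(C_0)\), we have \(|F(z)-\Phi(z-x_0)|\le\rho_N(C_0)\) for every \(z\in\R\). The same bound holds for the left limits \(F(z^-)=\P(\Delta_1/\sqrt N<z\mid\rvy_0)\), since \(\Phi\) is continuous and we can take limits from the left.

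For any \(u\in\R\), write the closed-interval probability as
\[
\P\left(u\sqrt N\le\Delta_1\le(u+\ell_N)\sqrt N\,\middle|\,\rvy_0\right)=F(u+\ell_N)-F(u^-).
\]
This is the one point needing care: \(\Delta_1\) is lattice-valued, so atoms at the endpoints must be included. Using the left limit handles them exactly. Applying the CLT comparison at both endpoints gives
\[
F(u+\ell_N)-F(u^-)\le\Phi(u+\ell_N-x_0)-\Phi(u-x_0)+2\rho_N(C_0).
\]
Since the standard normal density is bounded by \(1/\sqrt{2\pi}\), the mean value theorem bounds the Gaussian increment by \(\ell_N/\sqrt{2\pi}\), uniformly in \(u\) and \(x_0\). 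Taking the supremum over \(u\) and then over configurations in the window yields \eqref{eq:critical-window-one-step-anticoncentration}.

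Finally, \(\ell_N\to0\) by assumption, and \(\rho_N(C_0)=o(1)\) by \eqref{eq:critical-window-one-step-clt-error-bound}, so the right-hand side is \(o(1)\). All of the real difficulty sits in Lemma~\ref{lem:critical-window-one-step-clt}, which is taken as given here. The argument itself is a routine Kolmogorov-distance-to-Lévy-concentration transfer.
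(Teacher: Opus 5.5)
Your proposal is correct and follows the paper's own proof essentially step for step. Like the paper, you write the interval probability as $F(u+\ell_N)-F(u^-)$, compare both endpoints with the continuous $\Phi(\cdot-x_0)$ to pick up $2\rho_N(C_0)$, and bound the Gaussian increment by $\ell_N/\sqrt{2\pi}$ using $\sup\varphi=1/\sqrt{2\pi}$.
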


\subsection{Proof of Theorem~\ref{thm:er-critical-window-winner-selection}}

The proof separates winner selection on the first update from the subsequent amplification to unanimity. Lemma~\ref{lem:critical-window-mean-advantage} and Lemma~\ref{lem:critical-window-one-step-clt} show uniformly that \(\Delta_1/\sqrt N\) is approximated by a Gaussian random variable with mean \(x_0\) and variance one. In particular, along any sequence with
\(x_0\to x\in\R\),
\[
\frac{\Delta_1}{\sqrt N}
\xrightarrow{\mathrm d}
x+Z,
\qquad
Z\sim\mathcal N(0,1),
\]
so the first update produces a positive or negative advantage with limiting probabilities \(\Phi(x)\) and \(\Phi(-x)\), respectively. Lemma~\ref{lem:critical-window-one-step-anticoncentration} shows that the probability of the event \(|\Delta_1|<\sqrt N/h_N\) is vanishing. Consequently, with probability \(1-o(1)\), the first update either has already reached unanimity or produces a signed advantage of magnitude at least \(\sqrt N/h_N\). Conditional on the first update, the future graphs are independent of the resulting configuration; thus the time-shifted form of Theorem~\ref{thm:er-polylogarithmic-days-to-unanimity} drives a positive advantage to blue unanimity and, after exchanging the colors, a negative advantage to red unanimity. The proof below makes this argument uniform and tracks its total error precisely.

\begin{proof}[Proof of Theorem~\ref{thm:er-critical-window-winner-selection}]
We first verify that the total error vanishes. Since
\(h_N=o(\sqrt{\log N})\), we have
\(\log N/h_N^2\to\infty\). Hence
\eqref{eqn:er-polylog-joint-failure-probability} gives
\begin{align}
\eta_N\left(\frac{\sqrt N}{h_N}\right)
&=
8\exp\left(-c\frac{\log N}{h_N^2}\right)
+2\exp\left(-(\log N)^2\right)
+2N^{-\xi}
=o(1).
\label{eq:critical-window-polylog-error-order}
\end{align}
Together with \(h_N\to\infty\),
\eqref{eq:critical-window-one-step-clt-error-bound}, and
\eqref{eq:critical-window-total-error}, this gives
\begin{align}
\mathfrak r_N(C_0)
&=
\rho_N(C_0)
+\frac1{h_N}
+\eta_N\left(\frac{\sqrt N}{h_N}\right)
=o(1).
\label{eq:critical-window-total-error-order}
\end{align}

Recall that \((\mathcal F_t)_{t\ge0}\) in \eqref{eqn:filtration-def} denotes the natural filtration generated by the initial configuration and the resampled graphs through time \(t\). Define the following three \(\mathcal F_1\)-measurable events, which partition the sample space:
\[
\mathcal E_N^+ \coloneqq \left\{\Delta_1\ge\frac{\sqrt N}{h_N}\right\}, \quad \mathcal E_N^- \coloneqq \left\{\Delta_1\le-\frac{\sqrt N}{h_N}\right\}, \quad \mathcal E_N^0 \coloneqq \left\{|\Delta_1|<\frac{\sqrt N}{h_N}\right\}.
\]
Lemma~\ref{lem:critical-window-one-step-anticoncentration} with
\(u=-h_N^{-1}\) and \(\ell_N=2/h_N\) gives, uniformly under
\eqref{eq:critical-window-condition},
\begin{align}
\P(\mathcal E_N^0\mid\rvy_0)
\le
\frac{2}{\sqrt{2\pi}\,h_N}
+2\rho_N(C_0).
\label{eq:critical-window-small-day-one-advantage}
\end{align}

The definition of \(\rho_N(C_0)\) controls the distribution function of
\(\Delta_1/\sqrt N\) and, because the Gaussian comparison distribution is
continuous, also its left limits. Evaluating at the thresholds
\(\pm h_N^{-1}\) therefore gives
\begin{subequations}
\begin{align}
\P(\mathcal E_N^+\mid\rvy_0)
&=
1-\Phi(h_N^{-1}-x_0)+O(\rho_N(C_0))
=
\Phi(x_0)+O\left(h_N^{-1}+\rho_N(C_0)\right),
\label{eq:critical-window-positive-day-one-advantage}\\
\P(\mathcal E_N^-\mid\rvy_0)
&=
\Phi(-h_N^{-1}-x_0)+O(\rho_N(C_0))
=
\Phi(-x_0)+O\left(h_N^{-1}+\rho_N(C_0)\right).
\label{eq:critical-window-negative-day-one-advantage}
\end{align}
\end{subequations}
Here we used \(1-\Phi(z)=\Phi(-z)\) and the Lipschitz continuity of
\(\Phi\). All three estimates are uniform over the configurations under
consideration.

Given \(\mathcal F_1\), the configuration \(\rvy_1\) is fixed and the future
interaction graphs are independent of it. Moreover, for all sufficiently
large \(N\), we have \(h_N\ge1\) and
\(c_1\sqrt{\log N}>1\). Hence the upper horizon in
\eqref{eqn:er-polylog-sufficient-time} is nonincreasing in the initial
advantage.

Blue unanimity either already holds at time \(1\), or
Theorem~\ref{thm:er-polylogarithmic-days-to-unanimity} applies from time
\(1\). On \(\mathcal E_N^+\), we have
\(\Delta_1\ge\sqrt N/h_N\), so its upper horizon is at most \(T_N\) and
the failure probability is at most \(\eta_N(\sqrt N/h_N)\). Since
\(\mathcal E_N^+\in\mathcal F_1\), this gives
\begin{align*}
\P\left(
\gR_{T_N+3}\ne\emptyset,\mathcal E_N^+
\,\middle|\,
\mathcal F_1
\right)
&=
\indi{\mathcal E_N^+}
\P\left(
\gR_{T_N+3}\ne\emptyset
\,\middle|\,
\mathcal F_1
\right)
\le
\eta_N\left(\frac{\sqrt N}{h_N}\right)
\indi{\mathcal E_N^+}.
\end{align*}
The same argument with the colors exchanged gives
\begin{align*}
\P\left(
\gB_{T_N+3}\ne\emptyset,\mathcal E_N^-
\,\middle|\,
\mathcal F_1
\right)
&=
\indi{\mathcal E_N^-}
\P\left(
\gB_{T_N+3}\ne\emptyset
\,\middle|\,
\mathcal F_1
\right)
\le
\eta_N\left(\frac{\sqrt N}{h_N}\right)
\indi{\mathcal E_N^-}.
\end{align*}
Here we used that \(\eta_N(s)\) is nonincreasing for \(s\ge0\). Taking
conditional expectations yields
\begin{align*}
\P\left(
\gR_{T_N+3}\ne\emptyset,\mathcal E_N^+
\,\middle|\,
\rvy_0
\right)
&\le
\eta_N\left(\frac{\sqrt N}{h_N}\right)
\P(\mathcal E_N^+\mid\rvy_0)
\le
\eta_N\left(\frac{\sqrt N}{h_N}\right),\\
\P\left(
\gB_{T_N+3}\ne\emptyset,\mathcal E_N^-
\,\middle|\,
\rvy_0
\right)
&\le
\eta_N\left(\frac{\sqrt N}{h_N}\right)
\P(\mathcal E_N^-\mid\rvy_0)
\le
\eta_N\left(\frac{\sqrt N}{h_N}\right).
\end{align*}

The all-blue and all-red events are disjoint. Therefore, the second bound
also implies
\[
\P\left(
\gR_{T_N+3}=\emptyset,\mathcal E_N^-
\,\middle|\,
\rvy_0
\right)
\le
\eta_N\left(\frac{\sqrt N}{h_N}\right).
\]
Using the partition
\(\mathcal E_N^+,\mathcal E_N^0,\mathcal E_N^-\), the lower bound follows
from
\begin{align*}
\P\left(
\gR_{T_N+3}=\emptyset
\,\middle|\,
\rvy_0
\right)
&\ge
\P\left(
\gR_{T_N+3}=\emptyset,\mathcal E_N^+
\,\middle|\,
\rvy_0
\right)\\
&=
\P(\mathcal E_N^+\mid\rvy_0)
-\P\left(
\gR_{T_N+3}\ne\emptyset,\mathcal E_N^+
\,\middle|\,
\rvy_0
\right)\\
&\ge
\P(\mathcal E_N^+\mid\rvy_0)
-\eta_N\left(\frac{\sqrt N}{h_N}\right).
\end{align*}
For the upper bound, the same partition gives
\begin{align*}
&\, \P\left(
\gR_{T_N+3}=\emptyset
\,\middle|\,
\rvy_0
\right)\\
=&\,
\P\left(
\gR_{T_N+3}=\emptyset,\mathcal E_N^+
\,\middle|\,
\rvy_0
\right)
+
\P\left(
\gR_{T_N+3}=\emptyset,\mathcal E_N^0
\,\middle|\,
\rvy_0
\right)
+
\P\left(
\gR_{T_N+3}=\emptyset,\mathcal E_N^-
\,\middle|\,
\rvy_0
\right)\\
\le &\,
\P(\mathcal E_N^+\mid\rvy_0)
+\P(\mathcal E_N^0\mid\rvy_0)
+\eta_N\left(\frac{\sqrt N}{h_N}\right).
\end{align*}
Combining these two bounds with
\eqref{eq:critical-window-small-day-one-advantage} and
\eqref{eq:critical-window-positive-day-one-advantage} gives
\[
\P\left(
\gR_{T_N+3}=\emptyset
\,\middle|\,
\rvy_0
\right)
=
\Phi(x_0)+O\left(\mathfrak r_N(C_0)\right).
\]
By exchanging the colors and using
\eqref{eq:critical-window-negative-day-one-advantage}, we similarly obtain
\[
\P\left(
\gB_{T_N+3}=\emptyset
\,\middle|\,
\rvy_0
\right)
=
\Phi(-x_0)+O\left(\mathfrak r_N(C_0)\right).
\]
These are \eqref{eq:critical-window-blue-winning-probability} and
\eqref{eq:critical-window-red-winning-probability}.

Finally, the assumptions on \(h_N\) imply
\(1\le h_N\le\sqrt{\log N}\) for all sufficiently large \(N\), and hence
\(\log h_N=O(\log\log N)\). Therefore,
\begin{align*}
\log\left(Kh_N\sqrt{N/\log N}\right)
&=
\frac12\log N-\frac12\log\log N+\log h_N+O(1)\\
&=
\frac12\log N+O(\log\log N),\\
\log\left(c_1\sqrt{\log N}\right)
&=
\frac12\log\log N+O(1).
\end{align*}
The quotient in \eqref{eq:critical-window-amplification-horizon} is positive
for all sufficiently large \(N\), while the maximum and ceiling affect it by
at most \(O(1)\). Therefore,
\(T_{N}+3=(1+o(1))\log N/\log\log N\), completing the proof.
\end{proof}

\section{Future Directions}

We conclude this paper with several potential directions for future research.

\paragraph{Connectivity and weighted self-opinions.}
Below the standing condition \(b>1\), isolated vertices prevent unanimity on a
typical daily graph, but repeated resampling may still create weaker notions of
eventual agreement. Another extension gives a vertex's current opinion an
explicit weight in the update rule, introducing inertia or a bias toward
switching. Both modifications change the local threshold that underlies the
Gaussian-window calculation.

\paragraph{Effect of spatial locality.}
The Erd\H{o}s--R\'{e}nyi (ER) model treats vertices as statistically interchangeable, whereas a \emph{random geometric graph} (RGG), denoted by \(\mathbb{G}(N,p,d)\), embeds vertices as points in a \(d\)-dimensional feature space and introduces geometric dependence and local bottlenecks. For fixed \(p\in(0,1)\), Bubeck, Ding, Eldan, and R\'{a}cz~\cite{bubeck2016testing} proved that the total variation distance \(\mathrm{TV}(\mathbb{G}(N,p),\mathbb{G}(N,p,d))\) converges to $1$ when \(d\ll N^3\), whereas the same distance tends uniformly to zero when \(d\gg N^3\). For the sparse regime \(p=c/N\), the same work proved distinguishability in terms of total variation distance when \(d\ll(\log N)^3\), while Brennan, Bresler, and Nagaraj~\cite{brennan2020phase}, and subsequently Liu, Mohanty, Schramm, and Yang~\cite{liu2022testing} established equivalence also in terms of total variation distance for $d \gg N^{3/2}$ and $d\gg (\log N)^{36}$, respectively, narrowing the gap toward the conjectured threshold $d\asymp(\log N)^3$. At the spectral level, Cao and Zhu~\cite{cao2025spectra} proved that when $d\gtrsim Np\log(1/p)$ and $Np\to\infty$, the \emph{empirical spectral distribution} (ESD) of the normalized adjacency matrix $\rmA/\sqrt{Np}$ under \(\mathbb{G}(N,p,d)\) converges to the semicircle law; and when $d\gg \log N$, $p = c/N$ for some fixed $c>0$, the ESD of $\rmA/\sqrt{c}$ converges to the limiting spectral distribution under \(\mathbb{G}(N,c/N)\), matching the ER limit of Ding and Jiang~\cite{ding2010spectral}.

The preceding results indicate that RGGs in sufficiently high dimensions behave similarly to ER graphs, while those in lower dimensions preserve geometric features and can display more intricate dynamics. This leads us to investigate how spatial locality influences both the consensus threshold and the rate at which consensus is reached in the RGG model relative to the ER benchmark.

\paragraph{Other interaction mechanisms.}
Beyond resampled random graphs, it is of interest to explore how different interaction mechanisms affect winner selection and the speed of reaching unanimity. For instance, one may consider exact-majority population protocols featuring random pairwise interactions \cite{alistarh2015fast}, systems in which social and opinion variables co-evolve continuously \cite{zimper2026clustering}, or preferential-attachment networks that grow and incorporate reinforced opinion adoption \cite{podder2026model}. An open question is whether phenomena such as the critical window for winner selection, stepwise amplification, and sharp regimes for convergence time continue to manifest in these alternative settings, and to what extent these effects rely on the structure of synchronous, graph-based majority updates.

\printbibliography

\newpage
\appendix
\phantomsection
\addcontentsline{toc}{section}{Appendices}
\section{Continuity-Corrected Gaussian Estimates Via Poisson Approximation}
\label{sec:poisson-proof-cc-gaussian}
In this section, we present the proof of
Lemma~\ref{lem:cc-gaussian-flip-probabilities} in
Section~\ref{sec:one-step-evolution-advantage}, together with the necessary
technical preparations.

The proof of Lemma~\ref{lem:cc-gaussian-flip-probabilities} proceeds in two steps. First, each sparse binomial variable in a one-vertex degree difference is coupled to a Poisson variable with the same mean, reducing the desired flip probability estimates to estimates for a difference of two independent Poisson variables. Second, a continuity-corrected Gaussian approximation for that Poisson difference is transferred back through the coupling. This yields the required Gaussian formulas while retaining the half-integer correction caused by the lattice support.

The current section develops those two ingredients. We first recall total
variation and maximal coupling, and then combine them with Le Cam's inequality
to couple a difference of independent binomial variables to the corresponding
Poisson difference. Finally, we establish the continuity-corrected distributional
and local estimates for that Poisson difference. Throughout, we use the
standing edge probability \(p\) from Assumption~\ref{ass:sparse-er} and the one-step notation from Section~\ref{sec:one-step-evolution-advantage}.

\subsection{Total variation and maximal coupling}
For probability measures \(\mu\) and \(\nu\) on a countable space
\(\mathcal X\), denote the total variation distance by
\[
d_{\mathrm{TV}}(\mu,\nu)
\coloneqq
\sup_{A\subseteq\mathcal X}|\mu(A)-\nu(A)|.
\]

\begin{theorem}[Le Cam's inequality
{\cite{lecam1960poissonbinomial,steele1994lecam}}]
\label{thm:le-cam-inequality}
Let \(\xi_1,\ldots,\xi_n\) be independent Bernoulli random variables with
\(\P(\xi_i=1)=p_i\). We denote
\[
W\coloneqq\sum_{i=1}^n\xi_i,
\qquad
\lambda\coloneqq\sum_{i=1}^np_i,
\]
and let \(P\sim\Pois(\lambda)\). Then
\begin{align}
\sum_{k=0}^{\infty}
\left|
\P(W=k)-e^{-\lambda}\frac{\lambda^k}{k!}
\right|
\le
2\sum_{i=1}^np_i^2.
\label{eq:le-cam-l1}
\end{align}
Equivalently, under the previous total-variation convention, we have
\begin{align}
d_{\mathrm{TV}}\bigl(\mathcal L(W),\mathcal L(P)\bigr)
\le
\sum_{i=1}^np_i^2,
\label{eq:le-cam-tv}
\end{align}
where \(\mathcal L(W)\) and \(\mathcal L(P)\) denote the laws of \(W\) and \(P\), respectively.
\end{theorem}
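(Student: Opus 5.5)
The statement to prove is Le Cam's inequality (Theorem~\ref{thm:le-cam-inequality}). The plan is to argue by a coupling, one Bernoulli summand at a time. The key observation is that for a single Bernoulli \(\xi_i\sim\Ber(p_i)\) and \(P_i\sim\Pois(p_i)\) one can construct a joint law with \(\P(\xi_i\ne P_i)\le p_i^2\).

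\textbf{Single-summand coupling.} The total variation distance is computable directly:
\[
d_{\mathrm{TV}}\bigl(\Ber(p_i),\Pois(p_i)\bigr)=\tfrac12\Bigl(|1-p_i-e^{-p_i}|+|p_i-p_ie^{-p_i}|+\P(P_i\ge2)\Bigr).
\]
Since \(1-\Pois(p_i)(\{0,1\})=1-e^{-p_i}-p_ie^{-p_i}\), and using \(e^{-p_i}\ge1-p_i\), this sum equals
\[
2\bigl(p_i-p_ie^{-p_i}\bigr)\cdot\tfrac12=p_i(1-e^{-p_i})\le p_i^2.
\]
Equivalently, one can construct the coupling explicitly. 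Set \(\xi_i=0\) with probability \(1-p_i\). On the complementary event, take \(P_i=0\) when \(U\le e^{-p_i}-(1-p_i)\), and otherwise take \(P_i\) distributed as the Poisson tail. The maximal coupling theorem (the standard fact that \(\inf\P(X\ne Y)=d_{\mathrm{TV}}\) over couplings) then yields a pair \((\xi_i,P_i)\) with \(\P(\xi_i\ne P_i)\le p_i^2\).

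\textbf{Summing.} Take these couplings independently across \(i\), so that \(W=\sum\xi_i\) and \(P=\sum P_i\). Independence makes the \(P_i\) independent Poisson variables, and additivity of independent Poissons gives \(P\sim\Pois(\lambda)\). For any set \(A\),
\[
|\P(W\in A)-\P(P\in A)|\le\P(W\ne P)\le\sum_i\P(\xi_i\ne P_i)\le\sum_ip_i^2,
\]
by a union bound. This proves \eqref{eq:le-cam-tv}.

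\textbf{The \(\ell^1\) form.} On a countable space the identity
\[
\sum_k|\mu(k)-\nu(k)|=2\,d_{\mathrm{TV}}(\mu,\nu)
\]
holds: the supremum defining \(d_{\mathrm{TV}}\) is attained at \(A=\{k:\mu(k)>\nu(k)\}\), and \(\sum_k(\mu(k)-\nu(k))=0\) shows that the positive and negative parts have equal mass. This gives \eqref{eq:le-cam-l1}, and the same identity justifies the word ``equivalently'' in the statement.

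The only step needing care is the exact single-summand total-variation computation, in particular the sign bookkeeping of \(1-p_i-e^{-p_i}\le0\). Everything else is routine.
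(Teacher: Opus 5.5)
The paper does not prove this theorem. It is quoted as an external result from Le Cam and Steele, and only the maximal-coupling lemma (Lemma~\ref{lem:maximal-coupling}) is proved, independently of it. So there is no in-paper argument to match. Your proposal supplies a correct, self-contained proof: the classical one-summand coupling, then independence and a union bound, then the identity $\sum_k|\mu(k)-\nu(k)|=2d_{\mathrm{TV}}(\mu,\nu)$. Its only external ingredient is that lemma, so there is no circularity.

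Two small repairs are needed.
\begin{itemize}
\item Your computation actually gives $d_{\mathrm{TV}}(\Ber(p_i),\Pois(p_i))=p_i(1-e^{-p_i})$ exactly, since the three absolute values sum to $2p_i(1-e^{-p_i})$. Passing from this to $p_i^2$ uses $1-e^{-x}\le x$, which you should state.
\item Your explicit coupling is garbled as written. You never set $P_i=0$ on $\{\xi_i=0\}$, and you drop the matched mass $p_ie^{-p_i}$ at $1$. You also use the unconditional probability $e^{-p_i}-(1-p_i)$ as if it were a conditional threshold. The clean version is the quantile coupling with a single uniform $U$ on $[0,1]$: set $\xi_i=\indi{U>1-p_i}$ and let $P_i$ be the $\Pois(p_i)$ quantile of $U$. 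These disagree exactly on $\{1-p_i<U\le e^{-p_i}\}\cup\{U>(1+p_i)e^{-p_i}\}$, which has total probability $p_i(1-e^{-p_i})$. Since you fall back on maximal coupling anyway, this is cosmetic.
\end{itemize}

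As for what each route buys: citing keeps the appendix short. Your argument gives the slightly sharper bound $\sum_ip_i(1-e^{-p_i})$. More usefully, it gives an explicit coupling with $\P(W\ne P)\le\sum_ip_i^2$. That coupling is precisely what Lemma~\ref{lem:le-cam-binomial-difference-coupling} needs. There, your construction could replace the two-step detour of first taking Le Cam's total-variation bound and then realizing it via Lemma~\ref{lem:maximal-coupling}.
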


\begin{lemma}[Maximal coupling]
\label{lem:maximal-coupling}
Let \(\mu\) and \(\nu\) be probability measures on a countable space
\(\mathcal X\). There is a coupling \((U,V)\) with
\(\mathcal L(U)=\mu\) and \(\mathcal L(V)=\nu\) such that
\begin{align}
\P(U\ne V)
=
d_{\mathrm{TV}}(\mu,\nu)
=
\frac12\sum_{x\in\mathcal X}|\mu(x)-\nu(x)|.
\label{eq:maximal-coupling}
\end{align}
\end{lemma}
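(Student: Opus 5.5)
The statement to prove is the maximal coupling lemma (Lemma~\ref{lem:maximal-coupling}). The plan is the standard explicit construction on the countable space \(\mathcal X\).

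\textbf{Step 1: the supremum identity.} First establish
\[
d_{\mathrm{TV}}(\mu,\nu)=\tfrac12\sum_x|\mu(x)-\nu(x)|.
\]
Set \(A^*\coloneqq\{x:\mu(x)>\nu(x)\}\). For any \(A\subseteq\mathcal X\),
\[
\mu(A)-\nu(A)\le\sum_{x\in A\cap A^*}(\mu(x)-\nu(x))\le\mu(A^*)-\nu(A^*).
\]
The symmetric bound holds with the complement of \(A^*\). Since the total masses agree,
\[
\mu(A^*)-\nu(A^*)=\nu((A^*)^c)-\mu((A^*)^c)=\tfrac12\sum_x|\mu(x)-\nu(x)|.
\]
All sums converge absolutely, so the rearrangements are legitimate.

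\textbf{Step 2: the coupling inequality.} For any coupling \((U,V)\) and any \(A\),
\[
\mu(A)-\nu(A)=\P(U\in A)-\P(V\in A)\le\P(U\ne V).
\]
Hence \(\P(U\ne V)\ge d_{\mathrm{TV}}(\mu,\nu)\) for every coupling.

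\textbf{Step 3: the construction.} Let \(m(x)\coloneqq\min\{\mu(x),\nu(x)\}\) and \(\theta\coloneqq\sum_x m(x)=1-d_{\mathrm{TV}}(\mu,\nu)\).

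If \(\theta=1\), then \(\mu=\nu\). Take \(U=V\sim\mu\).

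If \(\theta=0\), take \(U\sim\mu\) and \(V\sim\nu\) independent. Then \(\mu\) and \(\nu\) have disjoint supports, so \(\P(U\ne V)=1=d_{\mathrm{TV}}\).

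Otherwise \(0<\theta<1\), and we proceed as follows.
\begin{itemize}
\item Draw \(B\sim\Ber(\theta)\).
\item On \(\{B=1\}\), draw \(Z\sim m/\theta\) and set \(U=V=Z\).
\item On \(\{B=0\}\), draw \(U\sim(\mu-m)/(1-\theta)\) and \(V\sim(\nu-m)/(1-\theta)\) independently.
\end{itemize}
Both residual measures are nonnegative with total mass \(1-\theta\), so they normalize correctly.

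The marginal of \(U\) is \(m+(\mu-m)=\mu\), and similarly \(V\) has marginal \(\nu\). The residuals \(\mu-m\) and \(\nu-m\) are supported on the disjoint sets \(A^*\) and \(\{\nu>\mu\}\), so \(U\ne V\) almost surely on \(\{B=0\}\). Therefore
\[
\P(U\ne V)=1-\theta=d_{\mathrm{TV}}(\mu,\nu).
\]

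The only point needing care is this disjointness of the residual supports, which upgrades the inequality \(\P(U\ne V)\le1-\theta\) to an equality. Together with Step 2 it also shows the coupling is optimal. No further obstacle arises.
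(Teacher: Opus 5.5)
Your proof is correct and uses the same construction as the paper: put mass $\theta=\sum_x\min\{\mu(x),\nu(x)\}$ on the diagonal, draw independently from the normalized residuals, and note that their disjoint supports force $U\ne V$ on the complementary event. Your Step~1 also verifies the identity $d_{\mathrm{TV}}(\mu,\nu)=\tfrac12\sum_x|\mu(x)-\nu(x)|$, which the paper uses implicitly when writing $q=1-d_{\mathrm{TV}}(\mu,\nu)$; your Step~2 proves optimality of the coupling, which the statement does not require.
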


\begin{proof}[Proof of Lemma~\ref{lem:maximal-coupling}]
Denote \(r(x)\coloneqq\min\{\mu(x),\nu(x)\}\) and
\(q\coloneqq\sum_xr(x)=1-d_{\mathrm{TV}}(\mu,\nu)\). With probability \(q\),
draw \(U=V\) from the probability mass function \(r/q\). On the complementary
event, draw \(U\) and \(V\) from the normalized residual masses
\((\mu-r)/(1-q)\) and \((\nu-r)/(1-q)\), respectively. The two residual
supports are disjoint, so \(U\ne V\) on this event. This coupling has the
prescribed marginals and satisfies \eqref{eq:maximal-coupling}; the cases
\(q\in\{0,1\}\) follow by the same construction with the empty component
omitted.
\end{proof}

\begin{lemma}[Le Cam coupling]
\label{lem:le-cam-binomial-difference-coupling}
Let \(X\sim\Bin(N_X,p)\) and \(Y\sim\Bin(N_Y,p)\) be independent random variables. There exists a joint realization \((\widehat X,\widehat Y,P_X,P_Y)\), where the marginals \(\widehat X\stackrel d=X\), \(\widehat Y\stackrel d=Y\) are independent, and \(P_X\sim\Pois(pN_{X})\), \(P_Y\sim\Pois(N_Yp)\) are independent, such that
\begin{align}
\P\left(
\widehat X-\widehat Y\ne P_X-P_Y
\right)
\le
(N_X+N_Y)p^2.
\label{eq:le-cam-difference-mismatch}
\end{align}
Moreover,
\begin{align}
\sup_{A\subseteq\Z}
\left|
\P(X-Y\in A)-\P(P_X-P_Y\in A)
\right|
\le
(N_X+N_Y)p^2.
\label{eq:le-cam-difference-events}
\end{align}
\end{lemma}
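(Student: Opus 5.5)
The plan is to build the coupling one coordinate at a time and then combine the two pieces by independence. By Le Cam's inequality (Theorem~\ref{thm:le-cam-inequality}), applied with \(n=N_X\) and every \(p_i=p\), we get
\[
d_{\mathrm{TV}}\bigl(\Bin(N_X,p),\Pois(N_Xp)\bigr)\le N_Xp^2 .
\]
Lemma~\ref{lem:maximal-coupling} then gives a pair \((\widehat X,P_X)\) on a common probability space with the correct marginals and \(\P(\widehat X\ne P_X)\le N_Xp^2\). In the same way we build a pair \((\widehat Y,P_Y)\) with \(\P(\widehat Y\ne P_Y)\le N_Yp^2\).

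Next we place the two pairs on a product space, so that the pair \((\widehat X,P_X)\) is independent of the pair \((\widehat Y,P_Y)\). Then \(\widehat X\) and \(\widehat Y\) are independent with the laws of \(X\) and \(Y\), and \(P_X\) and \(P_Y\) are independent Poisson variables with the stated means. These are exactly the independence requirements in the statement; nothing is claimed about dependence across the two pairs beyond this. On the event \(\{\widehat X=P_X\}\cap\{\widehat Y=P_Y\}\) the two differences agree. A union bound therefore gives
\[
\P\bigl(\widehat X-\widehat Y\ne P_X-P_Y\bigr)\le \P(\widehat X\ne P_X)+\P(\widehat Y\ne P_Y)\le (N_X+N_Y)p^2,
\]
which is \eqref{eq:le-cam-difference-mismatch}.

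For \eqref{eq:le-cam-difference-events}, fix any \(A\subseteq\Z\). Since \(\widehat X-\widehat Y\) has the law of \(X-Y\),
\[
\bigl|\P(X-Y\in A)-\P(P_X-P_Y\in A)\bigr|=\bigl|\E\bigl[\indi{\widehat X-\widehat Y\in A}-\indi{P_X-P_Y\in A}\bigr]\bigr|.
\]
The integrand vanishes off the mismatch event and has absolute value at most one, so the right-hand side is at most \(\P(\widehat X-\widehat Y\ne P_X-P_Y)\). Taking the supremum over \(A\) completes the argument.

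There is no real obstacle. The only points needing care are making sure the product construction delivers the stated independence structure, and handling the degenerate cases \(N_X=0\) or \(N_Y=0\). In those cases the corresponding variables are identically zero and the coupling is trivial.
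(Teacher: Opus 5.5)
Your proposal is correct and follows the paper's proof essentially step for step: Le Cam's inequality with every \(p_i=p\) for each coordinate, maximal coupling of each binomial with its Poisson, independent placement of the two coupled pairs, a union bound for the mismatch, and bounding the event discrepancy by the mismatch probability. Your remark on the degenerate cases \(N_X=0\) or \(N_Y=0\) is a harmless addition that the paper leaves implicit.
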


\begin{proof}[Proof of Lemma~\ref{lem:le-cam-binomial-difference-coupling}]
We write \(X\) as the sum of \(N_X\) independent \(\Ber(p)\) variables.
By applying Theorem~\ref{thm:le-cam-inequality} with \(p_i=p\), for some
\(P_X\sim\Pois(pN_{X})\), we have
\[
d_{\mathrm{TV}}\left(
\mathcal L(X),\Pois(pN_{X})
\right)
\le
p^{2}N_{X}.
\]
Lemma~\ref{lem:maximal-coupling} thus yields a joint realization
\((\widehat X,P_X)\) where \(\widehat X\stackrel d=X\) such that
\[
\P(\widehat X\ne P_X)\le p^{2}N_{X}.
\]
Similarly, a joint realization \((\widehat Y,P_Y)\) can be constructed such that
\[
\P(\widehat Y\ne P_Y)\leq p^{2}N_{Y},
\]
where \(\widehat Y\stackrel d=Y\) and
\(P_Y\sim\Pois(N_Yp)\). These two coupled pairs are drawn independently, which subsequently preserves the required independence within \((\widehat X,\widehat Y)\) and within \((P_X,P_Y)\).
Moreover,
\[
\left\{
\widehat X-\widehat Y\ne P_X-P_Y
\right\}
\subseteq
\{\widehat X\ne P_X\}\cup\{\widehat Y\ne P_Y\}.
\]
The union bound proves \eqref{eq:le-cam-difference-mismatch}. For every
\(A\subseteq\Z\), the two indicators of membership in \(A\) agree whenever
the coupled differences agree. Hence their expectation difference is at most
the mismatch probability, which proves
\eqref{eq:le-cam-difference-events}.
\end{proof}

\subsection{Continuity correction for a Poisson difference}
\begin{lemma}[Continuity correction for a Poisson difference]
\label{lem:poisson-difference-cc}
Let \(P_X\sim\Pois(\lambda_X)\) and
\(P_Y\sim\Pois(\lambda_Y)\) be independent, and set
\[
S\coloneqq P_X-P_Y,
\qquad
\lambda\coloneqq\lambda_X+\lambda_Y,
\qquad
m\coloneqq\lambda_X-\lambda_Y.
\]
Suppose that for some fixed \(M>0\), \(|m|\le M\sqrt{\lambda}\) as \(\lambda\to\infty\). Then uniformly over integers \(k,\ell\),
\begin{subequations}
\begin{align}
\P(S\le\ell)
&=
\Phi\left(
\frac{\ell+1/2-m}{\sqrt{\lambda}}
\right)
+O_M(\lambda^{-1}),
\label{eq:poisson-difference-cdf}\\
\P(S=k)
&=
\frac1{\sqrt{\lambda}}
\varphi\left(
\frac{k-m}{\sqrt{\lambda}}
\right)
+O_M(\lambda^{-3/2}).
\label{eq:poisson-difference-local}
\end{align}
\end{subequations}
\end{lemma}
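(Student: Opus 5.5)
We prove the local estimate \eqref{eq:poisson-difference-local} first and then sum it to obtain \eqref{eq:poisson-difference-cdf}, treating the lattice correction explicitly in the summation step. The starting point is the characteristic function
\[
\E e^{i\theta S}=\exp\bigl(\lambda_X(e^{i\theta}-1)+\lambda_Y(e^{-i\theta}-1)\bigr),
\]
so that by Fourier inversion on the lattice \(\Z\),
\[
\P(S=k)=\frac1{2\pi}\int_{-\pi}^{\pi}
\exp\bigl(-\lambda(1-\cos\theta)+im\sin\theta-ik\theta\bigr)\,d\theta .
\]

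For the local estimate we split the integral at \(|\theta|=\lambda^{-1/2}\log\lambda\). On the outer region, \(1-\cos\theta\ge c\theta^2\) makes the contribution \(O(e^{-c\log^2\lambda})\), which is negligible. On the inner region we expand the exponent:
\[
\lambda(1-\cos\theta)=\lambda\theta^2/2+O(\lambda\theta^4),
\qquad
m\sin\theta=m\theta-m\theta^3/6+O(m\theta^5).
\]
Since \(|m|\le M\sqrt\lambda\) and \(\theta=O(\lambda^{-1/2}\log\lambda)\), the correction term \(m\theta^3\) is only \(O(\lambda^{-1}\log^3\lambda)\) pointwise. After rescaling \(\theta=u/\sqrt\lambda\), the main term becomes \(\lambda^{-1/2}\varphi((k-m)/\sqrt\lambda)\), and the first-order correction is
\[
\lambda^{-1/2}\int e^{-u^2/2-iuw}\Bigl(-\frac{i m u^3}{6\lambda^{3/2}}\Bigr)\,du,
\qquad w=(k-m)/\sqrt\lambda .
\]
This integral is \(\lambda^{-1/2}\cdot O(M\lambda^{-1})\), a Hermite-function term that is bounded uniformly in \(w\). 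The quartic terms contribute \(O(\lambda^{-3/2})\) as well. Hence \eqref{eq:poisson-difference-local} holds uniformly in \(k\), with the constant depending on \(M\) only through \(|m|/\sqrt\lambda\le M\).

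For the distribution function, an Edgeworth-type route seems cleanest. Pure summation of the local estimate over \(k\le\ell\) accumulates an \(O(\lambda^{-3/2})\) error over \(\sqrt\lambda\) terms, giving \(O(\lambda^{-1})\) only in the central range. In the tails one instead uses the Chernoff bound
\[
\P(|S-m|>t)\le 2\exp\bigl(-t^2/(2(\lambda+t))\bigr)
\]
together with Gaussian tails, which makes both sides \(O(\lambda^{-1})\) once \(|\ell-m|\ge C\sqrt{\lambda\log\lambda}\). The central range \(|\ell-m|\le C\sqrt{\lambda\log\lambda}\) contains \(O(\sqrt{\lambda\log\lambda})\) lattice points, so naive summation loses a \(\sqrt{\log\lambda}\) factor. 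I would therefore not sum the \(O(\lambda^{-3/2})\) error crudely. Instead, sum the refined local expansion
\[
\lambda^{-1/2}\varphi(w)\Bigl(1+\frac{\kappa_3}{6\lambda^{3/2}}H_3(w)\Bigr)+O(\lambda^{-3/2}e^{-cw^2})+O(\lambda^{-2}),
\qquad \kappa_3=m .
\]
In this form the remainder is Gaussian-weighted, so summing it costs only \(O(\lambda^{-1})\). The remaining sums are handled by Euler--Maclaurin with midpoint evaluation: \(\sum_{k\le\ell}\lambda^{-1/2}\varphi((k-m)/\sqrt\lambda)\) equals \(\Phi((\ell+1/2-m)/\sqrt\lambda)+O(\lambda^{-1})\). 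The midpoint rule has error controlled by the second derivative, namely \(\lambda^{-1}\int|\varphi''|\). The skewness term sums to \(O(m\lambda^{-3/2})=O(\lambda^{-1})\).

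The main obstacle is obtaining the local remainder in Gaussian-weighted form, i.e.\ uniform decay in \(w\) rather than a flat \(O(\lambda^{-3/2})\). To get it, I would bound the Fourier remainder after moving the inner integration contour to \(\theta+i s\) with \(s\asymp w/\sqrt\lambda\), which is a saddle-point shift. If this proves messy, an alternative is to note that \(S\) has the same law as the difference of sums of \(n\) i.i.d.\ \(\Pois(\lambda_X/n)\)\(-\)\(\Pois(\lambda_Y/n)\) summands, and to invoke the classical lattice Edgeworth expansion with continuity correction (Esseen). That expansion gives \(\P(S\le\ell)\) with the \(\tfrac12\)-shift and an error of order \(\lambda^{-1}\) directly, uniformly in \(\ell\), where the cumulants here are \(\kappa_2=\lambda\), \(\kappa_3=m\), and \(\kappa_4=\lambda\).
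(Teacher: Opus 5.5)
Your proposal is correct, but for the distribution-function bound \eqref{eq:poisson-difference-cdf} it takes a genuinely different route from the paper. The local estimate \eqref{eq:poisson-difference-local} you handle essentially as the paper does: lattice Fourier inversion, truncation at a polylogarithmic multiple of $\lambda^{-1/2}$, and the observation that the cubic term is only $O(\lambda^{-1})$ because $\kappa_3=m=O(\sqrt\lambda)$.

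For \eqref{eq:poisson-difference-cdf} you sum the local expansion. You correctly note that a flat $O(\lambda^{-3/2})$ remainder loses a factor $\sqrt{\log\lambda}$ over the central range, so you need two further steps. First, a Gaussian-weighted local remainder. Your saddle-point shift does supply this on $|w|\le C\sqrt{\log\lambda}$: the integrand is entire and $2\pi$-periodic, so the whole period contour may be shifted, which is exactly exponential tilting of the two Poisson laws. Second, a midpoint Euler--Maclaurin step, which is what produces the $+1/2$. Two small remarks. You need not isolate the $H_3$ term: since $m/\lambda^{3/2}=O(M/\lambda)$, it is itself a Gaussian-weighted $O(\lambda^{-3/2})$ term. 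And the Esseen fallback needs the second-order lattice expansion, uniform over the $\lambda$-dependent summand law (e.g.\ take $n\asymp\lambda$ so the summands range over a compact family of span-one lattice laws), not just Esseen's first-order result.

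The paper avoids all of this with a smoothing device. It adds an independent $U$ uniform on $[-1/2,1/2]$, so that $\P(S\le\ell)=\P(S+U\le\ell+1/2)$ holds exactly and the continuity correction is built in. It then applies the CDF Fourier inversion formula to the absolutely continuous variable $(S+U-m)/\sqrt\lambda$, whose characteristic function is the Poisson factor times $\operatorname{sinc}(u/(2\sqrt\lambda))$. The central range $|u|\le\sqrt{A\log\lambda}$ contributes $O(\lambda^{-1})$ directly. The periodic aliases of the lattice characteristic function near $u=2\pi j\sqrt\lambda$ are suppressed by the zeros of the sinc factor, each contributing $O(j^{-2}\lambda^{-1})$.

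The paper's route is therefore shorter and needs neither a non-uniform local bound nor any summation. Yours yields a stronger by-product, a non-uniform local Edgeworth expansion. The cost is that its most technical step, the Gaussian-weighted remainder, is the part you only sketch.
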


\begin{proof}[Proof of Lemma~\ref{lem:poisson-difference-cc}]
Let \(U\) be uniform on \([-1/2,1/2]\), independently of \(S\), and define
\[
Y\coloneqq\frac{S+U-m}{\sqrt{\lambda}}.
\]
For every integer \(\ell\), the exact identity below follows from the continuity of \(U\),
\begin{align}
\P(S\le\ell)
=
\P\left(
Y\le\frac{\ell+1/2-m}{\sqrt{\lambda}}
\right),
\label{eq:poisson-uniform-smoothing-identity}
\end{align}
which then leads to the half-integer continuity correction. We then denote
\[
\operatorname{sinc}(x)\coloneqq
\begin{cases}
\sin(x)/x,&x\ne0,\\
1,&x=0.
\end{cases}
\]
    By independence of \(P_X\), \(P_Y\), and \(U\), the characteristic
    function of \(Y\) factors as
    \begin{align*}
    \psi_\lambda(u)
    &\coloneqq
    \E\exp\left(
    \frac{\ii u(S+U-m)}{\sqrt{\lambda}}
    \right)\\
    &=
    \exp\left(-\frac{\ii um}{\sqrt{\lambda}}\right)
    \E\exp\left(\frac{\ii uP_X}{\sqrt{\lambda}}\right)
    \E\exp\left(-\frac{\ii uP_Y}{\sqrt{\lambda}}\right)
    \E\exp\left(\frac{\ii uU}{\sqrt{\lambda}}\right).
    \end{align*}
    Here we denote the imaginary unit by \(\ii=\sqrt{-1}\). The Poisson characteristic-function identity
    \(\E e^{\ii zP}=\exp\{\theta(e^{\ii z}-1)\}\) for
    \(P\sim\Pois(\theta)\) gives
    \begin{align*}
    \E\exp\left(\frac{\ii uP_X}{\sqrt{\lambda}}\right)
    &=
    \exp\left\{
    \lambda_X\left(e^{\ii u/\sqrt{\lambda}}-1\right)
    \right\},\\
    \E\exp\left(-\frac{\ii uP_Y}{\sqrt{\lambda}}\right)
    &=
    \exp\left\{
    \lambda_Y\left(e^{-\ii u/\sqrt{\lambda}}-1\right)
    \right\}.
    \end{align*}
    Moreover, since \(U\) is uniform on \([-1/2,1/2]\),
    \begin{align*}
    \E\exp\left(\frac{\ii uU}{\sqrt{\lambda}}\right)
    &=
    \int_{-1/2}^{1/2}
    e^{\ii ux/\sqrt{\lambda}}
    \,dx =
    \frac{\sin\left(u/(2\sqrt{\lambda})\right)}
    {u/(2\sqrt{\lambda})}
    =
    \operatorname{sinc}\left(\frac{u}{2\sqrt{\lambda}}\right),
    \end{align*}
    where the value at \(u=0\) follows by continuity. Combining these three
    factors and using \(m=\lambda_X-\lambda_Y\), we obtain
    \begin{align}
    \psi_\lambda(u)
    ={}&
    \exp\left\{
    -\frac{\ii u(\lambda_X-\lambda_Y)}{\sqrt{\lambda}}
    +\lambda_X\left(e^{\ii u/\sqrt{\lambda}}-1\right)
    +\lambda_Y\left(e^{-\ii u/\sqrt{\lambda}}-1\right)
    \right\}
    \operatorname{sinc}\left(\frac{u}{2\sqrt{\lambda}}\right)
    \notag\\
    ={}&
    \exp\left\{
    \lambda_X
    \left(
    e^{\ii u/\sqrt{\lambda}}-1-\frac{\ii u}{\sqrt{\lambda}}
    \right)
    +\lambda_Y
    \left(
    e^{-\ii u/\sqrt{\lambda}}-1+\frac{\ii u}{\sqrt{\lambda}}
    \right)
    \right\}
    \operatorname{sinc}\left(\frac{u}{2\sqrt{\lambda}}\right).
    \label{eq:smoothed-poisson-difference-cf}
    \end{align}
Fix a sufficiently large constant \(A\), depending only on \(M\), and let
\[
T_\lambda\coloneqq\sqrt{A\log\lambda}
\]
denote the truncation threshold.
\begin{claim}
\label{claim:smoothed-poisson-central-expansion}
Uniformly for \(|u|\le T_\lambda\),
\begin{align}
\psi_\lambda(u)
=
e^{-u^2/2}
\left[
1+
O_M\left(
\frac{u^2+|u|^3+u^4}{\lambda}
\right)
\right].
\label{eq:smoothed-poisson-central-expansion}
\end{align}
\end{claim}
We defer the proof of the claim until the end of the proof.

Since \(Y\) has a continuous distribution, Fourier inversion for distribution
functions gives
\begin{align}
\P(Y\le x)-\Phi(x)
=
-\frac1{2\pi}
\int_{\R}
e^{-\ii ux}
\frac{\psi_\lambda(u)-e^{-u^2/2}}{\ii u}
\,du.
\label{eq:smoothed-poisson-cdf-inversion}
\end{align}
The integral is absolutely convergent; the estimates below also verify this
fact. On \(|u|\le T_\lambda\),
Claim~\ref{claim:smoothed-poisson-central-expansion} implies
\begin{align}
\int_{|u|\le T_\lambda}
\frac{|\psi_\lambda(u)-e^{-u^2/2}|}{|u|}
\,du
\le
\frac{C_M}{\lambda}.
\label{eq:smoothed-poisson-central-integral}
\end{align}

It remains to control the noncentral Fourier aliases. For
\[
I_j\coloneqq
\left[
(2j-1)\pi\sqrt{\lambda},
(2j+1)\pi\sqrt{\lambda}
\right],
\qquad j\in\Z,
\]
write \(u=2\pi j\sqrt{\lambda}+v\), where
\(|v|\le\pi\sqrt{\lambda}\). The modulus of the exponential factor in
\eqref{eq:smoothed-poisson-difference-cf} is
\[
\exp\left\{
\lambda\left(\cos(v/\sqrt{\lambda})-1\right)
\right\}
\le e^{-cv^2}
\]
for an absolute \(c>0\). On the central interval \(I_0\), this gives
\begin{align}
\int_{I_0\cap\{|u|>T_\lambda\}}
\frac{|\psi_\lambda(u)|}{|u|}
\,du
\le
Ce^{-cT_\lambda^2}.
\label{eq:smoothed-poisson-central-tail}
\end{align}
For \(j\ne0\), the zero of the sinc factor at
\(2\pi j\sqrt{\lambda}\) gives
\[
\left|
\operatorname{sinc}\left(\frac{u}{2\sqrt{\lambda}}\right)
\right|
\le
\frac{C|v|}{|j|\sqrt{\lambda}},
\qquad
|u|\ge c|j|\sqrt{\lambda}.
\]
Consequently,
\begin{align}
\int_{I_j}
\frac{|\psi_\lambda(u)|}{|u|}
\,du
\le
\frac{C}{j^2\lambda}
\int_{\R}|v|e^{-cv^2}\,dv
\le
\frac{C}{j^2\lambda}.
\label{eq:smoothed-poisson-alias-bound}
\end{align}
After summing over \(j\ne0\), choosing \(A\) sufficiently large in
\eqref{eq:smoothed-poisson-central-tail}, and using
\[
\int_{|u|>T_\lambda}\frac{e^{-u^2/2}}{|u|}\,du
\le Ce^{-T_\lambda^2/2},
\]
we obtain
\[
\int_{|u|>T_\lambda}
\frac{|\psi_\lambda(u)-e^{-u^2/2}|}{|u|}
\,du
\le\frac{C_M}{\lambda}.
\]
Together with \eqref{eq:smoothed-poisson-cdf-inversion} and
\eqref{eq:smoothed-poisson-central-integral}, this proves
\[
\sup_{x\in\R}|\P(Y\le x)-\Phi(x)|
\le\frac{C_M}{\lambda}.
\]
Equation~\eqref{eq:poisson-difference-cdf} now follows from
\eqref{eq:poisson-uniform-smoothing-identity}.

For the local estimate, omit the uniform smoothing factor and let
\[
\psi_\lambda^{(0)}(u)
\coloneqq
\exp\left\{
\lambda_X
\left(
e^{\ii \frac{u}{\sqrt{\lambda}}}-1-\frac{\ii u}{\sqrt{\lambda}}
\right)
+
\lambda_Y
\left(
e^{-\ii \frac{u}{\sqrt{\lambda}}}-1+\frac{\ii u}{\sqrt{\lambda}}
\right)
\right\}.
\]
Fourier inversion on one lattice period gives
\begin{align*}
\P(S=k)
=
\frac1{2\pi\sqrt{\lambda}}
\int_{-\pi\sqrt{\lambda}}^{\pi\sqrt{\lambda}}
e^{-\ii u(k-m)/\sqrt{\lambda}}
\psi_\lambda^{(0)}(u)
\,du.
\end{align*}
On \(|u|\le T_\lambda\), the same expansion as above, without the
\(O(u^2/\lambda)\) sinc term, gives
\[
\psi_\lambda^{(0)}(u)
=
e^{-u^2/2}
\left[
1+O_M\left(\frac{|u|^3+u^4}{\lambda}\right)
\right].
\]
The contribution of this error is \(O_M(\lambda^{-3/2})\). On the remainder
of the lattice period,
\[
|\psi_\lambda^{(0)}(u)|\le e^{-cu^2},
\]
and the Gaussian Fourier tail satisfies the same bound. Increasing \(A\), if
necessary, makes both tail contributions \(O_M(\lambda^{-3/2})\). Extending
the Gaussian integral to \(\R\) therefore yields
\[
\P(S=k)
=
\frac1{2\pi\sqrt{\lambda}}
\int_{\R}
e^{-\ii u(k-m)/\sqrt{\lambda}}e^{-u^2/2}
\,du
+O_M(\lambda^{-3/2}),
\]
which is \eqref{eq:poisson-difference-local}.

It remains to prove Claim~\ref{claim:smoothed-poisson-central-expansion}.
Taylor's formula in the exponent of
\eqref{eq:smoothed-poisson-difference-cf} gives, uniformly for
\(|u|\le T_\lambda\),
\begin{align*}
&\lambda_X
\left(
e^{\ii u/\sqrt{\lambda}}-1-\frac{\ii u}{\sqrt{\lambda}}
\right)
+
\lambda_Y
\left(
e^{-\ii u/\sqrt{\lambda}}-1+\frac{\ii u}{\sqrt{\lambda}}
\right)\\
&\qquad=
-\frac{u^2}{2}
-\frac{\ii m u^3}{6\lambda^{3/2}}
+\frac{u^4}{24\lambda}
+O\left(\frac{|u|^5}{\lambda^{3/2}}\right).
\end{align*}
Here the quadratic term uses
\(\lambda_X+\lambda_Y=\lambda\), while the cubic term uses
\(\lambda_X-\lambda_Y=m\). Since \(|m|\le M\sqrt{\lambda}\) and
\(T_\lambda=O(\sqrt{\log\lambda})\), the preceding display becomes
\begin{align}
&\lambda_X
\left(
e^{\ii u/\sqrt{\lambda}}-1-\frac{\ii u}{\sqrt{\lambda}}
\right)
+
\lambda_Y
\left(
e^{-\ii u/\sqrt{\lambda}}-1+\frac{\ii u}{\sqrt{\lambda}}
\right)\notag\\
&\qquad=
-\frac{u^2}{2}
+O_M\left(\frac{|u|^3+u^4}{\lambda}\right).
\label{eq:smoothed-poisson-exponent-expansion}
\end{align}
The error in \eqref{eq:smoothed-poisson-exponent-expansion} is \(o(1)\)
uniformly on the central range. Therefore,
\begin{align}
&\exp\left\{
\lambda_X
\left(
e^{\ii u/\sqrt{\lambda}}-1-\frac{\ii u}{\sqrt{\lambda}}
\right)
+
\lambda_Y
\left(
e^{-\ii u/\sqrt{\lambda}}-1+\frac{\ii u}{\sqrt{\lambda}}
\right)
\right\}\notag\\
&\qquad=
e^{-u^2/2}
\left[
1+
O_M\left(\frac{|u|^3+u^4}{\lambda}\right)
\right].
\label{eq:smoothed-poisson-exponential-expansion}
\end{align}
Finally, Taylor expansion at the origin gives
\begin{align}
\operatorname{sinc}\left(\frac{u}{2\sqrt{\lambda}}\right)
&=
1-\frac{u^2}{24\lambda}
+O\left(\frac{u^4}{\lambda^2}\right)
=
1+O\left(\frac{u^2}{\lambda}\right)
\label{eq:smoothed-poisson-sinc-expansion}
\end{align}
uniformly for \(|u|\le T_\lambda\). Multiplying
\eqref{eq:smoothed-poisson-exponential-expansion} and
\eqref{eq:smoothed-poisson-sinc-expansion} proves
\eqref{eq:smoothed-poisson-central-expansion}. The same calculation without
the sinc factor gives the unsmoothed expansion used in the local estimate,
which completes the proof.
\end{proof}

\subsection{Proof of Lemma~\ref{lem:cc-gaussian-flip-probabilities}}
\begin{proof}[Proof of
    Lemma~\ref{lem:cc-gaussian-flip-probabilities}]
    Condition throughout on \(\rvy_t\). We first prove
    \eqref{eq:cc-gaussian-pR}. By \eqref{eqn:D-Rt-distribution}, write
    \[
    \rD_t^{\gR}=X_t^{\gR}-Y_t^{\gR},
    \qquad
    X_t^{\gR}\sim\Bin(|\gB_t|,p),
    \qquad
    Y_t^{\gR}\sim\Bin(|\gR_t|-1,p),
    \]
    with the two binomial variables independent. Let
    \[
    P_{X,t}^{\gR}\sim\Pois(|\gB_t|p),
    \qquad
    P_{Y,t}^{\gR}\sim\Pois((|\gR_t|-1)p)
    \]
    be independent Poisson variables, and denote    
    \[
    S_t^{\gR}\coloneqq P_{X,t}^{\gR}-P_{Y,t}^{\gR},
    \qquad
    \lambda_t\coloneqq(N-1)p.
    \]
    The mean of \(S_t^{\gR}\) is \(m_t^{\gR}\), while its variance is
    \(\lambda_t\). Applying
    Lemma~\ref{lem:le-cam-binomial-difference-coupling} with
    \(N_X=|\gB_t|\) and \(N_Y=|\gR_t|-1\) gives
    \begin{align}
    \sup_{A\subseteq\Z}
    \left|
    \P(\rD_t^{\gR}\in A\mid\rvy_t)
    -
    \P(S_t^{\gR}\in A)
    \right|
    \le
    C(N-1)p^2
    =
    O\left(\frac{(\log N)^2}{N}\right).
    \label{eq:er-poisson-coupling}
    \end{align}
    
    Recall that
    \[
    v_t^{\gR}=(N-1)p(1-p)=\lambda_t(1-p),
    \qquad
    \lambda_t\asymp\log N.
    \]
    The assumed bound on \(m_t^{\gR}/\sqrt{v_t^{\gR}}\) therefore implies a
    fixed bound on \(m_t^{\gR}/\sqrt{\lambda_t}\). Applying
    \eqref{eq:poisson-difference-cdf} with \(\ell=0\) gives
    \[
    \P(S_t^{\gR}\le0)
    =
    \Phi\left(
    \frac{1/2-m_t^{\gR}}{\sqrt{\lambda_t}}
    \right)
    +O\left(\frac1{\log N}\right).
    \]
    Moreover,
    \[
    \left|
    (1/2-m_t^{\gR})
    \left(
    \frac1{\sqrt{\lambda_t}}
    -
    \frac1{\sqrt{v_t^{\gR}}}
    \right)
    \right|
    \le C_Mp
    =o\left(\frac1{\log N}\right).
    \]
    Since \(\Phi\) is Lipschitz and the coupling error in
    \eqref{eq:er-poisson-coupling} is \(o((\log N)^{-1})\), we conclude that
    \[
    \P(\rD_t^{\gR}\le0\mid\rvy_t)
    =
    \Phi\left(
    \frac{1/2-m_t^{\gR}}{\sqrt{v_t^{\gR}}}
    \right)
    +O\left(\frac1{\log N}\right).
    \]
    Taking complements and using \(1-\Phi(x)=\Phi(-x)\) proves
    \eqref{eq:cc-gaussian-pR}. The proof of
    \eqref{eq:cc-gaussian-pB} is identical, starting from
    \eqref{eqn:D-Bt-distribution}.
    
    We next prove the estimates for \(p_t^{\gR}-p_t^{\gB}\). Assume
    \(\Delta_t\ge1\), since both conclusions are immediate when \(\Delta_t=0\).
    For each integer \(0\le j\le\Delta_t\), let
    \(\rD_{t,j}^{\gR}\) and \(\rD_{t,j}^{\gB}\) have the respective
    distributions
    \begin{align*}
    \rD_{t,j}^{\gR}
    &\stackrel d=
    \Bin(|\gR_t|+j,p)-\Bin(|\gR_t|-1,p),\\
    \rD_{t,j}^{\gB}
    &\stackrel d=
    \Bin(|\gR_t|,p)-\Bin(|\gR_t|-1+j,p),
    \end{align*}
    with independent binomial variables in each difference. Their means are
    \(p(j+1)\) and \(p(1-j)\), and both have variance
    \((2|\gR_t|+j-1)p(1-p)\).
    
    The standardized-mean assumption in
    \eqref{eqn:standardized-degree-differences} gives
    \[
    p\Delta_t
    \le
    C_M\sqrt{(N-1)p(1-p)}.
    \]
    Consequently, uniformly over \(0\le j\le\Delta_t\),
    \[
    (2|\gR_t|+j-1)p\asymp\log N,
    \qquad
    \frac{|p(j+1)|+|p(1-j)|}
    {\sqrt{(2|\gR_t|+j-1)p}}
    \le C_M.
    \]
    For each \(j\), Lemma~\ref{lem:le-cam-binomial-difference-coupling}
    replaces the two binomial variables by independent Poisson variables with
    total mean \((2|\gR_t|+j-1)p\) and coupling error at most \(Np^2\).
    Applying \eqref{eq:poisson-difference-local}, and then replacing the Poisson
    variance by the binomial variance, gives, uniformly in
    \(0\le j\le\Delta_t\) and \(k\in\{0,1\}\),
    \begin{align*}
    \P(\rD_{t,j}^{\gR}=k)
    &=
    \frac{
    \varphi\left(
    \dfrac{k-p(j+1)}
    {\sqrt{(2|\gR_t|+j-1)p(1-p)}}
    \right)
    }{
    \sqrt{(2|\gR_t|+j-1)p(1-p)}
    }
    +O_M\left(\frac1{\log N}\right),\\
    \P(\rD_{t,j}^{\gB}=k)
    &=
    \frac{
    \varphi\left(
    \dfrac{k-p(1-j)}
    {\sqrt{(2|\gR_t|+j-1)p(1-p)}}
    \right)
    }{
    \sqrt{(2|\gR_t|+j-1)p(1-p)}
    }
    +O_M\left(\frac1{\log N}\right).
    \end{align*}
    Indeed, the local Poisson error is \(O_M((\log N)^{-3/2})\), the coupling
    error is
    \[
    Np^2=O\left(\frac{(\log N)^2}{N}\right)
    =o\left(\frac1{\log N}\right),
    \]
    and replacing \(p\) by \(p(1-p)\) in the variance changes the Gaussian
    expression by \(O_M(p/\sqrt{\log N})\).
    
    At \(j=0\), the two interpolating differences have the same distribution.
    Adding one independent \(\Ber(p)\) trial at each successive value of \(j\)
    gives
    \begin{align*}
    \P(\rD_{t,j+1}^{\gR}>0)-\P(\rD_{t,j}^{\gR}>0)
    &=
    p\P(\rD_{t,j}^{\gR}=0),\\
    \P(\rD_{t,j}^{\gB}>0)-\P(\rD_{t,j+1}^{\gB}>0)
    &=
    p\P(\rD_{t,j}^{\gB}=1).
    \end{align*}
    Summing these identities from \(j=0\) to \(j=\Delta_t-1\), and using the
    equality in distribution at \(j=0\), yields
    \[
    p_t^{\gR}-p_t^{\gB}
    =
    p\sum_{j=0}^{\Delta_t-1}
    \left[
    \P(\rD_{t,j}^{\gR}=0)
    +
    \P(\rD_{t,j}^{\gB}=1)
    \right].
    \]
    If \(0\le\Delta_t\le C_0\sqrt{N/\log N}\), the standardized means in this
    identity converge to zero uniformly, so each paired local mass equals
    \[
    \sqrt{\frac{2}{\pi}}
    \frac1{\sqrt{(N-1)p(1-p)}}
    +O\left(\frac1{\log N}\right).
    \]
    Substitution proves \eqref{eq:cc-gaussian-critical-flip-difference}. In the
    full Gaussian window, the standardized arguments remain in a fixed compact
    interval. Each local mass is therefore bounded above and below by positive
    constant multiples of \(((N-1)p(1-p))^{-1/2}\), which proves
    \eqref{eq:cc-gaussian-flip-difference}.
    \end{proof}

\section{Deferred Proofs In Section~\ref{sec:one-step-evolution-advantage}}
\label{sec:deferred-Proofs-one-step-evolution-advantage}

\subsection{Proof of Lemma~\ref{lem:expected-variance-degree-differences}}
\begin{proof}[Proof of Lemma~\ref{lem:expected-variance-degree-differences}]
    The proof follows directly from linearity of expectation and variance. By
    \eqref{eqn:D-Rt-distribution},
    \begin{align*}
    m_{t}^{\gR}
    &=
    p|\gB_t|-p(|\gR_t|-1)
    =p(\Delta_t+1),\\
    v_{t}^{\gR}
    &=
    \bigl(|\gB_t|+|\gR_t|-1\bigr)p(1-p)
    =(N-1)p(1-p).
    \end{align*}
    Similarly, by \eqref{eqn:D-Bt-distribution},
    \begin{align*}
    m_{t}^{\gB}
    &=
    p|\gR_t|-p(|\gB_t|-1)
    =p(1-\Delta_t),\\
    v_{t}^{\gB}
    &=
    \bigl(|\gR_t|+|\gB_t|-1\bigr)p(1-p)
    =(N-1)p(1-p).
    \end{align*}
    By \eqref{eqn:er-edge-probability}, the common variance is asymptotic to
    \(b\log N\).
    \end{proof}

    \subsection{Proof of Lemma~\ref{lem:conditional-expectations-next-day}}
    \begin{proof}[Proof of Lemma~\ref{lem:conditional-expectations-next-day}]
        Condition on \(\rvy_t\). By the update rule in \eqref{eqn:opinion-update} and
        the degree differences in \eqref{eqn:degree-differences}, a red vertex
        \(v\in\gR_t\) contributes to \(\gB_{t+1}\) if and only if
        \(\rD_t^{\gR}(v)>0\), while it contributes to \(\gR_{t+1}\) if and only if
        \(\rD_t^{\gR}(v)\le0\). Similarly, a blue vertex \(u\in\gB_t\) contributes to
        \(\gR_{t+1}\) if and only if \(\rD_t^{\gB}(u)>0\), while it contributes to
        \(\gB_{t+1}\) if and only if \(\rD_t^{\gB}(u)\le0\). Therefore,
        \begin{align*}
        |\gB_{t+1}|
        &=
        \sum_{v\in\gR_t}\indi{\rD_t^{\gR}(v)>0}
        +
        \sum_{u\in\gB_t}\indi{\rD_t^{\gB}(u)\le0},\\
        |\gR_{t+1}|
        &=
        \sum_{v\in\gR_t}\indi{\rD_t^{\gR}(v)\le0}
        +
        \sum_{u\in\gB_t}\indi{\rD_t^{\gB}(u)>0}.
        \end{align*}
        Taking conditional expectations and using \eqref{eqn:p-q-R} and
        \eqref{eqn:p-q-B}, we obtain \eqref{eq:B-next-expectation} and
        \eqref{eq:R-next-expectation}.
        
        We now compute the conditional expectation of the advantage. By
        \eqref{eqn:delta-def}, \eqref{eq:B-next-expectation}, and
        \eqref{eq:R-next-expectation},
        \begin{align*}
        \E\left[\Delta_{t+1}\,\middle|\,\rvy_t\right]
        &=
        \E\left[ |\gB_{t+1}|-|\gR_{t+1}|\,\middle|\,\rvy_t\right]\\
        &=
        |\gR_t|p_t^{\gR}+|\gB_t|q_t^{\gB}
        -
        \left(|\gR_t|q_t^{\gR}+|\gB_t|p_t^{\gB}\right)\\
        &=
        \Delta_t
        +
        2|\gR_t|p_t^{\gR}
        -
        2|\gB_t|p_t^{\gB} \\
        &=
        \Delta_t
        +
        2\left(|\gR_t|p_t^{\gR}-|\gB_t|p_t^{\gB}\right),
        \end{align*}
        where the third equality uses \(q_t^{\gR}=1-p_t^{\gR}\) and
        \(q_t^{\gB}=1-p_t^{\gB}\). This proves \eqref{eq:advantage-next-expectation}. It remains only to rewrite the same expression in terms of \(\Delta_t\) and \(N\). From \eqref{eqn:camp-sizes-from-advantage-intro}, \(2|\gR_t|=N-\Delta_t\) and \(2|\gB_t|=N+\Delta_t\). Substituting these identities into
        \eqref{eq:advantage-next-expectation} yields
        \begin{align*}
        \E\left[\Delta_{t+1}\,\middle|\,\rvy_t\right]
        &=
        \Delta_t
        +
        \left(N-\Delta_t\right)p_t^{\gR}
        -
        \left(N+\Delta_t\right)p_t^{\gB} \\
        &=
        N\bigl(p_t^{\gR}-p_t^{\gB}\bigr)
        +
        \Delta_t\bigl(1-p_t^{\gR}-p_t^{\gB}\bigr),
        \end{align*}
        which is \eqref{eq:advantage-expectation-expanded}. This completes the proof.
        \end{proof}        

\section{Deferred Proofs In Section~\ref{sec:constant-days-to-unanimity}}
\label{sec:deferred-Proofs-constant-days-to-unanimity}
\subsection{Proof of Lemma~\ref{lem:one-day-extinction}}
\begin{proof}[Proof of Lemma~\ref{lem:one-day-extinction}]
    Condition on \(\rvy_t\) and assume \eqref{eqn:one-day-extinction-condition}.
    If \(\gR_t=\emptyset\), then no blue vertex has a red neighbor, and hence
    \(\gR_{t+1}=\emptyset\) deterministically. We may therefore assume
    \(1\le |\gR_t|\le rN\), so \(|\gB_t|\ge(1-r)N\).
    
    To use the uniform large-deviation estimate even when \(|\gR_t|\) is
    sublinear, we choose
    \[
    m_N\coloneqq\lfloor rN\rfloor.
    \]
    By \eqref{eqn:D-Rt-distribution} and \eqref{eqn:D-Bt-distribution}, both the
    probability that a red vertex stays red and the probability that a blue
    vertex flips to red are nondecreasing in the current red-camp size. Since \(|\gR_t|\) is an integer, \(|\gR_t|\le m_N\). It therefore suffices to bound the two probabilities when \(|\gR_t|=m_N\), which we assume for the remainder of this estimate.
    
    In this worst-case configuration, the scaled camp sizes defined in
    \eqref{eqn:scaled-camp-sizes} satisfy
    \[
    b_t^{\gR}=\frac{bm_N}{N},
    \qquad
    b_t^{\gB}=\frac{b(N-m_N)}{N}.
    \]
    
    Since \(m_N/N\to r\in(0,1/2)\), this worst-case configuration satisfies
    \eqref{eqn:linear-camp-condition}, for example with \(c_0=r/2\), for all
    sufficiently large \(N\). Corollary~\ref{cor:flip-probability-ld} therefore
    gives
    \begin{align*}
    q_t^{\gR}
    &=
    N^{-I(b_t^{\gB},b_t^{\gR})+o(1)},
    &
    p_t^{\gB}
    &=
    N^{-I(b_t^{\gB},b_t^{\gR})+o(1)}.
    \end{align*}
    Moreover,
    \begin{align*}
    I(b_t^{\gB},b_t^{\gR})
    &=
    \left(\sqrt{b_t^{\gB}}-\sqrt{b_t^{\gR}}\right)^2
    =
    b\left(\sqrt{1-r}-\sqrt{r}\right)^2+o(1).
    \end{align*}
    Because \(r<r_*\), the definition of \(r_*\) implies that
    \[
    \kappa
    \coloneqq
    b\left(\sqrt{1-r}-\sqrt{r}\right)^2-1
    >0.
    \]
    Thus, with \(\xi\coloneqq\kappa/2\), the uniform \(o(1)\) terms in
    Corollary~\ref{cor:flip-probability-ld} yield, for all sufficiently large
    \(N\),
    \[
    q_t^{\gR}\le N^{-1-\xi},
    \qquad
    p_t^{\gB}\le N^{-1-\xi}.
    \]
    By the preceding monotonicity reduction, these bounds hold for every original
    configuration satisfying \(|\gR_t|\le rN\).
    A vertex is red at time \(t+1\) only if it was red and stayed red, or it was
    blue and flipped to red. The union bound therefore gives
    \begin{align*}
    \P\left(\gR_{t+1}\neq\emptyset\,\middle|\,\rvy_t\right)
    &\le
    |\gR_t|q_t^{\gR}+|\gB_t|p_t^{\gB}\\
    &\le
    N\cdot N^{-1-\xi}+N\cdot N^{-1-\xi}
    =2N^{-\xi}.
    \end{align*}
    The constant \(\xi>0\) depends only on \(b\) and \(r\), which proves the
    lemma.
    \end{proof}

\subsection{Proof of Lemma~\ref{lem:one-step-reduction}}
\begin{proof}[Proof of Lemma~\ref{lem:one-step-reduction}]
    Throughout the proof, condition on \(\rvy_t\) and assume
    \eqref{eqn:one-step-reduction-condition}. Fix \(r\) satisfying
    \eqref{eqn:one-step-extinction-density-condition}, and define
    \[
    \rho\coloneqq\frac{q_K+r}{2}.
    \]
    Then \(q_K<\rho<r<r_*\). We claim that, uniformly over every
    configuration satisfying \eqref{eqn:one-step-reduction-condition},
    \begin{align}
    \E\left[|\gR_{t+1}|\,\middle|\,\rvy_t\right]
    \le \rho N+o(N).
    \label{eq:one-step-mean-rho}
    \end{align}
    Assuming \eqref{eq:one-step-mean-rho}, the desired high-probability bound
    follows directly from Lemma~\ref{lem:camp-size-read-two-concentration},
    applied at time \(t\) with \(s=\sqrt N\log N\):
    \begin{align*}
    \P\left(
    \left||\gR_{t+1}|-
    \E\left[|\gR_{t+1}|\,\middle|\,\rvy_t\right]\right|
    \ge \sqrt N\log (N)
    \,\middle|\,
    \rvy_t
    \right)
    \le
    2\exp\left(-(\log (N))^2\right).
    \end{align*}
    Since \(\rho<r\) and \(\sqrt N\log N=o(N)\),
    \eqref{eq:one-step-mean-rho} implies that \(|\gR_{t+1}|\le rN\) for all
    sufficiently large \(N\), except with conditional probability at most
    \(2\exp\left(-(\log N)^2\right)\).
    
    It remains to prove \eqref{eq:one-step-mean-rho}. We divide the proof into two
    cases according to the size of \(\gR_t\).
    
    \noindent\textbf{Case 1: \(|\gR_t|\le \rho N\)}. In this case, the red camp
    is already below the intermediate density \(\rho<r_*\). Applying
    Lemma~\ref{lem:one-day-extinction} with extinction density \(\rho\), we obtain
    some \(\xi_\rho>0\) such that
    \begin{align*}
    \E\left[|\gR_{t+1}|\,\middle|\,\rvy_t\right]
    \le
    N\P\left(\gR_{t+1}\ne\emptyset\,\middle|\,\rvy_t\right)
    \le 2N^{1-\xi_\rho}
    =o(N).
    \end{align*}
    This proves \eqref{eq:one-step-mean-rho} in Case~1.
    
    \noindent\textbf{Case 2: \(|\gR_t|>\rho N\)}. Both camps have linear size.
    Since \eqref{eqn:one-step-reduction-condition} implies \(\Delta_t>0\),
    \eqref{eqn:camp-sizes-from-advantage-intro} gives
    \begin{align*}
    |\gR_t|
    =
    \frac{N-\Delta_t}{2}
    \le
    \frac N2,
    \qquad
    |\gB_t|
    =
    \frac{N+\Delta_t}{2}
    \ge
    \frac N2.
    \end{align*}
    Together with \(|\gR_t|>\rho N\), this allows us to apply
    Corollary~\ref{cor:gaussian-flip-probabilities} uniformly in the present case.
    
    Set \(\gamma_N\coloneqq\Delta_t/N\). Then
    \(0<\gamma_N\le 1\), and \eqref{eqn:one-step-reduction-condition}
    implies
    \begin{align*}
    \gamma_N\sqrt{\log (N)}
    \ge
    K.
    \end{align*}
    We first bound the probability \(q_t^{\gR}\) that a red vertex remains red. By
    \eqref{eqn:expected-degree-differences-R-B},
    \begin{align*}
    m_t^{\gR}
    =
    p(\Delta_t+1)
    =
    b\gamma_N\log (N)+\frac{b\log (N)}{N}.
    \end{align*}
    Moreover, the variance formula in the proof of
    Lemma~\ref{lem:expected-variance-degree-differences} gives
    \begin{align*}
    v_t^{\gR}
    \le
    b\log N
    .
    \end{align*}
    Combining all the estimates above, we obtain
    \begin{align*}
    x_t^{\gR}
    &\ge
    \sqrt b\,\gamma_N\sqrt{\log (N)}+o(1)
    \ge
    \sqrt b\,K+o(1).
    \end{align*}
    Therefore, by \eqref{eq:gaussian-qR} in
    Corollary~\ref{cor:gaussian-flip-probabilities},
    \begin{align*}
    q_t^{\gR}
    &\le
    \Phi(-x_t^{\gR})
    +
    O(1/\sqrt{\log (N)})
    \le
    \Phi(-\sqrt b\,K+o(1)) + O(1/\sqrt{\log (N)})
    =
    q_K+o(1).
    \end{align*}
    
    We next bound \(p_t^{\gB}\), the probability that a blue vertex flips to red.
    By \eqref{eqn:expected-degree-differences-R-B},
    \begin{align*}
    -m_t^{\gB}
    =
    b\gamma_N\log (N)
    -
    \frac{b\log (N)}{N}.
    \end{align*}
    The same variance computation gives
    \begin{align*}
    v_t^{\gB}
    \le
    b\log N
    .
    \end{align*}
    Combining the preceding displays gives
    \begin{align*}
    -x_t^{\gB}
    &\ge
    \sqrt b\,\gamma_N\sqrt{\log (N)}-o(1)
    \ge
    \sqrt b\,K-o(1).
    \end{align*}
    Equivalently,
    \(x_t^{\gB}\le-\sqrt b\,K+o(1)\). Therefore, by
    \eqref{eq:gaussian-pB} in
    Corollary~\ref{cor:gaussian-flip-probabilities},
    \begin{align*}
    p_t^{\gB}
    &\le
    \Phi(x_t^{\gB})
    +
    O(1/\sqrt{\log (N)})
    \le
    \Phi(-\sqrt b\,K+o(1)) + O(1/\sqrt{\log (N)})
    =
    q_K+o(1).
    \end{align*}
    Combining \(q_t^{\gR}\le q_K+o(1)\) and
    \(p_t^{\gB}\le q_K+o(1)\) with \eqref{eq:R-next-expectation}, we obtain
    \begin{align*}
    \E\left[|\gR_{t+1}|\,\middle|\,\rvy_t\right]
    &=
    |\gR_t|q_t^{\gR}+|\gB_t|p_t^{\gB} \notag\\
    &\le
    (q_K+o(1))N
    \le
    \rho N+o(N),
    \end{align*}
    where the last inequality uses \(q_K<\rho\). This proves
    \eqref{eq:one-step-mean-rho} in Case 2.
    
    Therefore, \eqref{eq:one-step-mean-rho} holds in both cases, completing the
    proof.
    \end{proof}
 
\section{Deferred Proofs In Section~\ref{sec:polylogarithmic-days-to-unanimity}}
\label{sec:deferred-Proofs-polylogarithmic-days-to-unanimity}

\subsection{Proof of Lemma~\ref{lem:stepwise-advantage-amplification}}

\begin{proof}[Proof of Lemma~\ref{lem:stepwise-advantage-amplification}]
    Condition throughout on \(\rvy_t\), and write
    \[
    s_N\coloneqq\sqrt{(N-1)p(1-p)}.
    \]
    The upper bound in
    \eqref{eqn:stepwise-advantage-amplification-condition} gives
    \(\Delta_t=o(N)\), so both camps have size at least \(N/3\) for all
    sufficiently large \(N\). Moreover,
    Lemma~\ref{lem:expected-variance-degree-differences} and
    \eqref{eqn:stepwise-advantage-amplification-condition} give
    \[
    \max\{|x_t^{\gR}|,|x_t^{\gB}|\}
    \le
    \frac{p(\Delta_t+1)}{s_N}
    \le M(b,K).
    \]
    The lower bound in
    \eqref{eqn:stepwise-advantage-amplification-condition} also gives
    \(\Delta_t\ge1\). Hence Lemma~\ref{lem:cc-gaussian-flip-probabilities}
    applies, and \eqref{eq:cc-gaussian-flip-difference}, together with
    \(p=b\log N/N\), yields
    \[
    N\bigl(p_t^{\gR}-p_t^{\gB}\bigr)
    \asymp
    \frac{Np}{s_N}\Delta_t
    \asymp
    \sqrt{\log N}\,\Delta_t,
    \]
    where the implicit constants depend only on \(b\) and \(K\).
    
    We denote \(\mu_t\coloneqq\E[\Delta_{t+1}\mid\rvy_t]\). By
    \eqref{eq:advantage-expectation-expanded} and
    \(0\le p_t^{\gR},p_t^{\gB}\le1\),
    \[
    \left|
    \mu_t-N\bigl(p_t^{\gR}-p_t^{\gB}\bigr)
    \right|
    =
    \Delta_t\left|1-p_t^{\gR}-p_t^{\gB}\right|
    \le
    \Delta_t.
    \]
    Since \(\sqrt{\log N}\to\infty\), we may therefore choose constants
    \(c_1,C_2>0\), depending only on \(b,K\), such that
    \begin{align}
    2c_1\sqrt{\log N}\,\Delta_t
    \le
    \mu_t
    \le
    (C_2-c_1)\sqrt{\log N}\,\Delta_t.
    \label{eqn:first-day-expectation-window}
    \end{align}
    
    Finally, Lemma~\ref{lem:camp-size-read-two-concentration} gives, for every
    \(u>0\),
    \[
    \P\left(
    |\Delta_{t+1}-\mu_t|\ge u
    \,\middle|\,\rvy_t
    \right)
    \le
    2\exp\left(-\frac{u^2}{4N}\right).
    \]
    Taking \(u=c_1\sqrt{\log N}\,\Delta_t\) and using
    \eqref{eqn:first-day-expectation-window}, we obtain
    \[
    c_1\sqrt{\log N}\,\Delta_t
    \le
    \Delta_{t+1}
    \le
    C_2\sqrt{\log N}\,\Delta_t
    \]
    except with conditional probability at most
    \[
    2\exp\left(-\frac{c_1^2}{4}
    \frac{\Delta_t^2\log N}{N}\right).
    \]
    Taking \(c=c_1^2/4\) proves
    \eqref{eqn:first-day-two-sided-amplification}.
    \end{proof}    

\section{Deferred Proofs In Section~\ref{sec:er-critical-window-winner-selection}}
\label{sec:deferred-proofs-critical-window-winner-selection}

\subsection{Proof of Lemma~\ref{lem:critical-window-mean-advantage}}
\begin{proof}[Proof of Lemma~\ref{lem:critical-window-mean-advantage}]
    By symmetry under interchanging blue and red, it suffices to consider
    \(\Delta_0\ge0\). We denote
    \[
    s_N\coloneqq\sqrt{(N-1)p(1-p)},
    \qquad
    z_N\coloneqq\frac{p\Delta_0}{s_N},
    \qquad
    \delta_N\coloneqq\frac{p-1/2}{s_N}.
    \]
    Condition~\eqref{eq:critical-window-condition} gives
    \begin{align}
    0\le\Delta_0
    &\le
    C_0\sqrt{\frac{\pi N}{2b\log N}},
    &
    z_N&=O(N^{-1/2}),
    &
    \delta_N&=O((\log N)^{-1/2}).
    \label{eq:critical-window-basic-scales}
    \end{align}
    Thus both color classes have size at least \(N/3\) for all sufficiently
    large \(N\), and the standardized means remain uniformly bounded. Hence
    Lemma~\ref{lem:cc-gaussian-flip-probabilities} applies uniformly.
    
    Its critical-window estimate
    \eqref{eq:cc-gaussian-critical-flip-difference} gives
    \begin{align}
    p_0^{\gR}-p_0^{\gB}
    =
    \sqrt{\frac{2}{\pi}}\frac{p\Delta_0}{s_N}
    +O\left(\frac{p\Delta_0}{\log N}\right).
    \label{eq:critical-window-flip-difference}
    \end{align}
    Moreover, Lemma~\ref{lem:expected-variance-degree-differences} gives the
    exact identities
    \[
    x_0^{\gR}-\frac1{2\sqrt{v_0^{\gR}}}=z_N+\delta_N,
    \qquad
    x_0^{\gB}-\frac1{2\sqrt{v_0^{\gB}}}=-z_N+\delta_N.
    \]
    Therefore, \eqref{eq:cc-gaussian-pR}--\eqref{eq:cc-gaussian-pB}, the identity
    \(\Phi(z)+\Phi(-z)=1\), and the Lipschitz continuity of \(\Phi\) yield
    \begin{align}
    p_0^{\gR}+p_0^{\gB}
    &=
    \Phi(z_N+\delta_N)+\Phi(-z_N+\delta_N)
    +O((\log N)^{-1})
    =
    1+O((\log N)^{-1/2}).
    \label{eq:critical-window-flip-sum}
    \end{align}
    Substituting \eqref{eq:critical-window-flip-difference} and
    \eqref{eq:critical-window-flip-sum} into
    \eqref{eq:advantage-expectation-expanded} and dividing by \(\sqrt N\) gives
    \begin{align}
    \frac{\E[\Delta_1\mid\rvy_0]}{\sqrt N}
    &=
    \sqrt{\frac{2}{\pi}}\frac{\sqrt N\,p\Delta_0}{s_N}
    +O\left(
    \frac{\sqrt N\,p\Delta_0}{\log N}
    +\frac{\Delta_0}{\sqrt{N\log N}}
    \right).
    \label{eq:critical-window-mean-decomposition}
    \end{align}
    Since \(p=b\log N/N\), the leading term satisfies
    \[
    \sqrt{\frac{2}{\pi}}\frac{\sqrt N\,p\Delta_0}{s_N}
    =
    x_0\sqrt{\frac{N}{N-1}}\frac1{\sqrt{1-p}}
    =
    x_0+O\left(\frac{\log N}{N}\right),
    \]
    whereas the two remainders in
    \eqref{eq:critical-window-mean-decomposition} are respectively
    \(O((\log N)^{-1/2})\) and \(O((\log N)^{-1})\). This proves
    \eqref{eq:critical-window-mean-asymptotic} for \(\Delta_0\ge0\).
    
    If \(\Delta_0<0\), interchange the two colors. This changes both \(x_0\)
    and \(\Delta_1\) to their negatives, so the same conclusion follows.
    \end{proof}
    
\subsection{Proof of Lemma~\ref{lem:critical-window-one-step-clt}}
\begin{proof}[Proof of Lemma~\ref{lem:critical-window-one-step-clt}]
    Fix an admissible initial configuration, abbreviate
    \(\chi_N=\chi_N(\rvy_0)\), and set
    \[
    \sigma_N\coloneqq\sqrt{Np(1-p)},
    \qquad
    D_N\coloneqq \rW_0^{\gR}-\rW_0^{\gB},
    \qquad
    F_N(k)\coloneqq\P(D_N\le k).
    \]
    Condition~\eqref{eq:critical-window-condition} gives
    \begin{align}
    \frac{p|\Delta_0|}{\sigma_N}=O(N^{-1/2}),
    \qquad
    \sigma_N\sim\sqrt{b\log N}\longrightarrow\infty.
    \label{eq:critical-window-bd-scales}
    \end{align}
    The variable \(D_N\) has mean \(-p\Delta_0\), variance \(\sigma_N^2\),
    and total sparse-binomial density parameter \(b\). Hence
    Lemma~\ref{lem:gaussian-sparse-binomial-difference}, evaluated at
    \(k\in\{-1,0\}\), gives
    \[
    F_N(k)
    =
    \Phi\left(\frac{k+p\Delta_0}{\sigma_N}\right)
    +O((\log N)^{-1/2})
    =
    \frac12+O((\log N)^{-1/2}),
    \]
    uniformly over the initial configurations under consideration. Therefore,
    \[
    \P(\rW_0^{\gR}\le \rW_0^{\gB})=F_N(0)
    =\frac12+O((\log N)^{-1/2}),
    \]
    and
    \[
    \P(\rW_0^{\gR}\ge \rW_0^{\gB})=1-F_N(-1)
    =\frac12+O((\log N)^{-1/2}).
    \]
    Multiplying these estimates yields
    \begin{align}
    \chi_N
    =
    \frac14+O((\log N)^{-1/2}).
    \label{eq:berkowitz-devlin-chi-limit}
    \end{align}
    In particular,
    \(\log(1/\chi_N)=O(1)=o(\log\sigma_N)\), so the hypotheses of
    Theorem~\ref{thm:berkowitz-devlin-one-step-clt} hold.
    
    Consider any sequence of admissible deterministic initial configurations.
    Conditional on each \(\rvy_0\), the next update uses a graph sampled
    independently from \(\mathbb{G}(N,p)\), so
    Theorem~\ref{thm:berkowitz-devlin-one-step-clt}
    applies with \(W_R=\rW_0^{\gR}\), \(W_B=\rW_0^{\gB}\), and normalization
    factor \(\chi_N=\chi_N(\rvy_0)\). The hypotheses verified above therefore
    give
    \[
    \frac{|\gR_1|-\E[|\gR_1|\mid\rvy_0]}
    {\sqrt{N\chi_N}}
    \xrightarrow{\mathrm d}\mathcal N(0,1).
    \]
    On the other hand, \(\Delta_1=N-2|\gR_1|\), and hence
    \[
    \Delta_1-\E[\Delta_1\mid\rvy_0]
    =
    -2\left(
    |\gR_1|-\E[|\gR_1|\mid\rvy_0]
    \right).
    \]
    Thus \(Y_N\) below is the negative of the standardized red-camp size in the
    preceding display. Since the standard normal distribution is symmetric,
    \[
    Y_N
    \coloneqq
    \frac{\Delta_1-\E[\Delta_1\mid\rvy_0]}
    {2\sqrt{N\chi_N}}
    \xrightarrow{\mathrm d}\mathcal N(0,1).
    \]
    Because \(\Phi\) is continuous, this convergence is also convergence in
    Kolmogorov distance. It is uniform over the admissible configurations:
    otherwise, for some \(\varepsilon>0\), one could select a sequence of
    admissible configurations whose Kolmogorov distances are at least
    \(\varepsilon\), contradicting the preceding sequential conclusion. Thus
    \[
    \eta_N^{\mathrm{BD}}(C_0)=o(1).
    \]
    
    It remains to restore the original centering and normalization. Define
    \[
    m_N\coloneqq\frac{\E[\Delta_1\mid\rvy_0]}{\sqrt N},
    \qquad
    \alpha_N\coloneqq2\sqrt{\chi_N}.
    \]
    Equations~\eqref{eq:berkowitz-devlin-chi-limit} and
    \eqref{eq:critical-window-mean-asymptotic} give, uniformly,
    \[
    \alpha_N=1+O((\log N)^{-1/2}),
    \qquad
    m_N=x_0+O((\log N)^{-1/2}).
    \]
    For every \(z\in\R\),
    \[
    \P\left(\frac{\Delta_1}{\sqrt N}\le z\,\middle|\,\rvy_0\right)
    =
    \P\left(Y_N\le\frac{z-m_N}{\alpha_N}\,\middle|\,\rvy_0\right).
    \]
    For \(\alpha\) in a fixed neighborhood of \(1\),
    \[
    \sup_{w\in\R}|\Phi(w/\alpha)-\Phi(w)|
    \le C|\alpha-1|,
    \]
    while the Lipschitz continuity of \(\Phi\) controls translations. Hence
    \begin{align*}
    \sup_{z\in\R}
    \left|
    \P\left(\frac{\Delta_1}{\sqrt N}\le z\,\middle|\,\rvy_0\right)
    -\Phi(z-x_0)
    \right|
    \le &\,
    \eta_N^{\mathrm{BD}}(C_0)
    +C|\alpha_N-1|+C|m_N-x_0|\\
    \le &\,
    \eta_N^{\mathrm{BD}}(C_0)
    +\frac{C}{\sqrt{\log N}}.
    \end{align*}
    Taking the supremum over the admissible initial configurations and using
    the definition \eqref{eq:critical-window-one-step-clt} proves
    \eqref{eq:critical-window-one-step-clt-error-bound}.
    \end{proof}

\subsection{Proof of Lemma~\ref{lem:critical-window-one-step-anticoncentration}}
    \begin{proof}[Proof of Lemma~\ref{lem:critical-window-one-step-anticoncentration}]
        For a fixed admissible initial configuration, define
        \[
        F_{\rvy_0}(z)
        \coloneqq
        \P\left(
        \frac{\Delta_1}{\sqrt N}\le z
        \,\middle|\,
        \rvy_0
        \right),
        \qquad
        G_{\rvy_0}(z)
        \coloneqq
        \Phi(z-x_0).
        \]
        By the definition of \(\rho_N(C_0)\) in
        \eqref{eq:critical-window-one-step-clt},
        \[
        \sup_{z\in\R}
        |F_{\rvy_0}(z)-G_{\rvy_0}(z)|
        \le\rho_N(C_0).
        \]
        Since \(G_{\rvy_0}\) is continuous, taking the left limit at \(u\) gives
        \[
        F_{\rvy_0}(u^-)
        \ge
        G_{\rvy_0}(u)-\rho_N(C_0).
        \]
        Therefore, for every \(u\in\R\),
        \begin{align*}
        \P\left(
        u\sqrt N\le\Delta_1\le(u+\ell_N)\sqrt N
        \,\middle|\,
        \rvy_0
        \right)
        =&\,
        F_{\rvy_0}(u+\ell_N)-F_{\rvy_0}(u^-)\\
        \le &\,
        G_{\rvy_0}(u+\ell_N)-G_{\rvy_0}(u)
        +2\rho_N(C_0)\\
        \le &\,
        \frac{\ell_N}{\sqrt{2\pi}}+2\rho_N(C_0),
        \end{align*}
        where the last inequality uses
        \(\sup_x\varphi(x)=1/\sqrt{2\pi}\). The bound is uniform in \(u\) and in
        the admissible initial configuration. Taking the supremum over \(u\), and
        then using \(\ell_N\to0\) and \(\rho_N(C_0)=o(1)\), proves
        \eqref{eq:critical-window-one-step-anticoncentration}.
        \end{proof}

\section{The Berkowitz--Devlin One-Step Central Limit Theorem}
\label{sec:berkowitz-devlin-one-step-clt}

For completeness, we record the external central limit theorem used in
Section~\ref{sec:er-critical-window-winner-selection}. The formulation below is a
restatement of \cite[Theorem~1]{berkowitz2020central}.

\begin{theorem}[Berkowitz--Devlin one-step CLT]
\label{thm:berkowitz-devlin-one-step-clt}
Let \(\gR_0,\gB_0\) be a deterministic partition of \([N]\). Independently
sample \(\gG\sim\mathbb{G}(N,p)\) and perform one synchronous majority update with
ties retaining their current colors. Define
\[
\sigma_N\coloneqq\sqrt{Np(1-p)}.
\]
For independent random variables
\[
W_R\sim\Bin(|\gR_0|,p),
\qquad
W_B\sim\Bin(|\gB_0|,p),
\]
set
\[
\chi_N
\coloneqq
\P(W_R\ge W_B)\P(W_R\le W_B).
\]
Then the following conclusions hold.
\begin{enumerate}[label=\textup{(\roman*)}]
\item For arbitrary parameter choices,
\begin{align}
\Var(|\gR_1|)
=
N\chi_N
+O\left(
|\Delta_0|+\frac{N}{\sigma_N}
\right).
\label{eq:berkowitz-devlin-variance}
\end{align}
\item Suppose that
\[
\sigma_N\longrightarrow\infty,
\qquad
\log(1/\chi_N)=o(\log\sigma_N).
\]
Then
\[
\Var(|\gR_1|)\sim N\chi_N,
\]
and, for every fixed \(a<c\),
\begin{align}
\P\left(
a\le
\frac{|\gR_1|-\E|\gR_1|}{\sqrt{N\chi_N}}
\le c
\right)
\longrightarrow
\Phi(c)-\Phi(a).
\label{eq:berkowitz-devlin-clt}
\end{align}
\end{enumerate}
In particular, the assumptions and conclusions in \textup{(ii)} hold if
\(\sigma_N\to\infty\) and
\[
|\Delta_0|p=O(\sigma_N).
\]
\end{theorem}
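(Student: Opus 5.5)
Write $|\gR_1|=\sum_{v\in[N]}I_v$, where $I_v$ indicates that $v$ is red after the update. The plan has two parts. Part (i) is an exact second-moment computation that exploits the pairwise structure of the dependence. Part (ii) is a Hoeffding (orthogonal) decomposition of $|\gR_1|$ in the independent edge variables $\{\xi_e\}_{e}$: we show that its first-order (linear) part carries asymptotically all of the variance $N\chi_N$, and we apply a Lyapunov CLT to that linear part. The final claim, that $|\Delta_0|p=O(\sigma_N)$ suffices, will come from checking that in this regime $\chi_N$ is bounded below, by the same Gaussian computation as in \eqref{eq:berkowitz-devlin-chi-limit}.

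\textbf{Variance.} For $u\neq v$, the indicators $I_u$ and $I_v$ share only the edge $\{u,v\}$; conditionally on $\xi_{uv}$ they are independent. Write $f_u(e)\coloneqq\P(I_u=1\mid\xi_{uv}=e)$. Then
\[
\mathrm{Cov}(I_u,I_v)=p(1-p)\bigl(f_u(1)-f_u(0)\bigr)\bigl(f_v(1)-f_v(0)\bigr).
\]
Each difference $f_u(1)-f_u(0)$ is, up to sign (the sign depends on the colour of $v$ relative to $u$), a point probability that the degree difference of $u$ equals a tie value. We evaluate these point probabilities with the local estimate \eqref{eq:poisson-difference-local}, or more generally with a local CLT for $W_R-W_B$. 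Summing over the $\asymp N^2$ pairs gives a contribution of order $N^2p\cdot\sigma_N^{-2}\asymp N$. This must be combined with the diagonal term $\sum_v p_v(1-p_v)$, where $p_v$ is the probability that $v$ ends up red. Expressing both pieces through the laws of $W_R$ and $W_B$, we aim to identify the total with $N\chi_N$: the product structure $\P(W_R\ge W_B)\P(W_R\le W_B)$ should emerge once the diagonal and off-diagonal pieces are grouped by the colours of $u$ and $v$. The remaining discrepancies come from three sources: replacing $|\gR_0|-1$ by $|\gR_0|$ in the binomials, second-order terms in the local expansion, and the asymmetry between the two camps. We expect these to produce exactly the error $O(|\Delta_0|+N/\sigma_N)$ in \eqref{eq:berkowitz-devlin-variance}. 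Under the hypotheses of (ii), $N/\sigma_N=o(N\chi_N)$ because $\chi_N\ge\sigma_N^{-o(1)}$, and $|\Delta_0|$ is also negligible in the relevant range; together these give $\Var(|\gR_1|)\sim N\chi_N$.

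\textbf{CLT.} Let $L\coloneqq\sum_e\bigl(\E[|\gR_1|\mid\xi_e=1]-\E[|\gR_1|\mid\xi_e=0]\bigr)(\xi_e-p)$ be the linear Hoeffding projection. Since $L$ is a sum of independent bounded terms, Lyapunov's condition reduces to checking that each coefficient is $O(1/\sigma_N)$. This holds because the coefficient of $\xi_{uv}$ is a sum of two tie probabilities, each of order $1/\sigma_N$. We then verify $\Var(L)=(1+o(1))N\chi_N$ and $\Var(|\gR_1|-L)=o(N\chi_N)$. The second bound is the sum of squared higher-order Hoeffding coefficients. For a set $S$ of edges, the coefficient is nonzero only if $S$ lies inside a union of vertex stars. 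Its size is controlled by iterated finite differences of degree-difference laws, each of which gains roughly a factor $\sigma_N^{-1}$ in local probability.

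\textbf{Main obstacle.} The hardest step is this last bound in the regime where $\chi_N$ is allowed to tend to zero. There we must show that the higher-order chaos, which is of size about $N\sigma_N^{-1}$ up to logarithmic factors, is still $o(N\chi_N)$. This is exactly where the hypothesis $\log(1/\chi_N)=o(\log\sigma_N)$ enters. I would first try to prove a crude bound of the form $N\sigma_N^{-1+o(1)}$ on the nonlinear variance, using uniform local-limit bounds on the point probabilities of $W_R-W_B$ shifted by $O(1)$.
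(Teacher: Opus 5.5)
\textbf{The CLT step fails.} Your outline for part (i) is workable, but the CLT step rests on a false claim. The linear Hoeffding projection $L$ does not carry asymptotically all of the variance, and $\Var(|\gR_1|-L)$ is not $o(N\chi_N)$.

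Put $\theta\coloneqq p\Delta_0/\sigma_N$ and $\tau\coloneqq\varphi(\theta)/\sigma_N$. By the local CLT, every tie probability that occurs here equals $\tau+O(\sigma_N^{-2})$. Now look at the edge coefficients of $L$:
\begin{enumerate}
\item For a bichromatic edge $\{u,v\}$ with $u$ red and $v$ blue, the coefficient of $\xi_{uv}-p$ is $-\P(X=0)+\P(X'=0)$. Here $X$ and $X'$ both have the law of $\Bin(|\gR_0|-1,p)-\Bin(|\gB_0|-1,p)$, so the coefficient vanishes exactly.
\item Monochromatic edges have coefficient $\pm2\tau+O(\sigma_N^{-2})$.
\end{enumerate}
Hence
\[
\Var(L)\approx 2p(1-p)\tau^2\bigl(|\gR_0|^2+|\gB_0|^2\bigr)=p(1-p)\tau^2\bigl(N^2+\Delta_0^2\bigr)\approx N\varphi(\theta)^2 ,
\]
whereas $\Var(|\gR_1|)\approx N\Phi(\theta)\Phi(-\theta)\approx N\chi_N$. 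At $\Delta_0=0$ this is $N/(2\pi)$ against $N/4$.

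Moreover, a Hoeffding component indexed by an edge set $S$ with $|S|\ge2$ is nonzero only if $S$ lies in a \emph{single} star, and two stars share only one edge. So the nonlinear pieces $H_v\coloneqq I_v-\E I_v-L_v$ are mutually orthogonal, and
\[
\Var(|\gR_1|-L)=\sum_v(\Var I_v-\Var L_v)\approx N\bigl(\Phi(\theta)\Phi(-\theta)-\varphi(\theta)^2\bigr).
\]
This is a fraction $1-2/\pi$ of the total variance at balance. It tends to the whole variance as $|\theta|\to\infty$, which is exactly when $\chi_N\to0$, the regime you flagged as hardest.

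This is the classical fact that majority has only $2/\pi$ of its Fourier weight on level one. Your finite-difference heuristic correctly makes each individual higher-order coefficient small. But $I_v$ is a threshold function of $N-1$ edges, and the total squared weight of its higher-order coefficients is $\Theta(\Var I_v)$. So the nonlinear variance is of order $N\chi_N$, not $N\sigma_N^{-1+o(1)}$.

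\textbf{What is needed instead.} Most of the Gaussian fluctuation sits in $\sum_v H_v$, a sum of orthogonal but dependent local terms ($H_u$ and $H_v$ both read $\xi_{uv}$). You must prove normality of the full sum $\sum_v(I_v-\E I_v)$, not of its linear projection. The paper does not reprove this theorem: it imports \cite[Theorem~1]{berkowitz2020central}. As noted in Remark~\ref{rem:berkowitz-devlin-qualitative}, that proof is a method-of-moments argument applied to $|\gR_1|$ itself. An argument of that kind is what your plan needs in place of the Lyapunov step for $L$.

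\textbf{A smaller, non-fatal inaccuracy in part (i).} The product $\P(W_R\ge W_B)\P(W_R\le W_B)$ comes from the diagonal alone, since $\sum_v\P(I_v=1)\P(I_v=0)=N\chi_N+O(N/\sigma_N)$. The off-diagonal covariances are positive for monochromatic pairs and negative for bichromatic pairs. They cancel to $O(\Delta_0^2/N+N/\sigma_N)$ rather than forming a main term of order $N$.
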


\begin{remark}[Qualitative nature of the one-step CLT]
\label{rem:berkowitz-devlin-qualitative}
Theorem~\ref{thm:berkowitz-devlin-one-step-clt} is a qualitative central
limit theorem: its method-of-moments proof gives convergence to the standard
normal distribution but does not provide a rate in Kolmogorov distance. This
is sufficient for our critical-window winner-selection result, which only
requires the uniform one-step approximation error to be \(o(1)\). In
Lemma~\ref{lem:critical-window-one-step-clt}, we therefore keep this
qualitative error separate from the explicit
\(O((\log N)^{-1/2})\) errors coming from the sparse-binomial approximation,
the variance normalization, and the conditional mean.
\end{remark}

\section{Read-k Chernoff Bound}
\label{sec:read-k-chernoff-bound}
We first introduce the concept of a read-\(k\) family.
\begin{definition}[Read-\(k\) family]
    Let \(\rX_1,\ldots,\rX_m\) be independent random variables. For each \(j\in[r]\), let \(P_j\subseteq[m]\), and let \(f_j\) be a Boolean function of $\{\rX_i\}_{i\in P_j}$. Suppose that each \(\rX_i\) is read by at most \(k\) of the functions \(f_j\), i.e., \(\bigl|\{j\in[r]: i\in P_j\}\bigr|\le k\) for every \(i\in[m]\). Then, the random variables $\{\rY_j \coloneqq f_j(\{\rX_i\}_{i\in P_j})\}_{j\in[r]}$ form a read-\(k\) family.
\end{definition}
Then we state the read-\(k\) Chernoff bound.
\begin{lemma}[Read-\(k\) Chernoff bound {\cite[Theorem~1.1]{gavinsky2015tail}}]
    \label{lem:read-k}
     Let \(\{\rY_j\}_{j\in[r]}\) be a read-\(k\) family of indicator random variables with \(p_j \coloneqq\P(\rY_j=1)\), and denote their average by \(\bar p \coloneqq r^{-1}\sum_{j=1}^r p_j\).
    If \(0<\bar p<1\), then
    \begin{subequations}
    \begin{align}
    \P\bigg(
    \sum_{j=1}^r \rY_j\ge (\bar p+\varepsilon)r
    \bigg)
    &\le
    \exp\bigg(
    -\frac{r}{k}\D_{\mathrm{KL}}(\bar p+\varepsilon\|\bar p)
    \bigg),
    &&0<\varepsilon\le1-\bar p,
    \\
    \P\bigg(
    \sum_{j=1}^r \rY_j\le (\bar p-\varepsilon)r
    \bigg)
    &\le
    \exp\bigg(
    -\frac{r}{k}\D_{\mathrm{KL}}(\bar p-\varepsilon\|\bar p)
    \bigg),
    &&0<\varepsilon\le\bar p.
    \end{align}
    \end{subequations}
    where the Kullback-Leibler divergence is defined as
    \[
    \D_{\mathrm{KL}}(u\|v)\coloneqq
    u\log(u/v)+(1-u)\log((1-u)/(1-v)).
    \]
    We use the standard convention \(0\log0=0\) at \(u\in\{0,1\}\).
    If \(\bar p\in\{0,1\}\), all the indicators are almost surely constant and
    the corresponding tail statements are trivial.
    \end{lemma}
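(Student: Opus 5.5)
The plan is an exponential-moment (Chernoff) argument, with independence across the indicators replaced by a generalized Hölder inequality of Finner type that exploits the read-\(k\) structure. The key intermediate claim is the following. For nonnegative functions \(g_j\) of \(\{\rX_i\}_{i\in P_j}\), where each coordinate \(\rX_i\) is read by at most \(k\) of them,
\begin{align}
\E\Bigl[\prod_{j=1}^r g_j\Bigr]\le\prod_{j=1}^r\bigl(\E[g_j^k]\bigr)^{1/k}.
\label{eq:plan-finner}
\end{align}

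I would prove \eqref{eq:plan-finner} by induction on the number \(m\) of underlying independent variables. Write \(J_m\coloneqq\{j:m\in P_j\}\), so that \(|J_m|\le k\). Condition on \(\rX_1,\ldots,\rX_{m-1}\). The factors with \(j\notin J_m\) are then constants. For the at most \(k\) factors in \(J_m\), apply H\"older in \(\rX_m\) alone with all exponents equal to \(k\); this is legitimate because \(\sum_{j\in J_m}1/k\le1\) and we are on a probability space. This replaces each \(g_j\), \(j\in J_m\), by
\[
\tilde g_j\coloneqq\bigl(\E_{\rX_m}[g_j^k]\bigr)^{1/k},
\]
which is a function of \(\{\rX_i\}_{i\in P_j\setminus\{m\}}\). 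The new family is still read-\(k\) on \(m-1\) variables. Applying the induction hypothesis and noting \(\E[\tilde g_j^k]=\E[g_j^k]\) closes the induction.

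Checking this bookkeeping, in particular that the read-\(k\) property survives the reduction and that the base case \(m=0\) is trivial, is the main obstacle. Everything after it is a routine computation.

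With \eqref{eq:plan-finner} in hand, take \(g_j=e^{\lambda\rY_j}\) with \(\lambda>0\) and set \(\mu=k\lambda\). Markov's inequality gives
\[
\P\Bigl(\sum_j\rY_j\ge ar\Bigr)\le e^{-\lambda ar}\prod_j\bigl(1-p_j+p_je^{\mu}\bigr)^{1/k}
=\exp\Bigl(-\frac rk\Bigl[\mu a-\frac1r\sum_j\log(1-p_j+p_je^{\mu})\Bigr]\Bigr),
\]
where \(a=\bar p+\varepsilon\). Since \(p\mapsto\log(1-p+pe^\mu)\) is concave, Jensen's inequality bounds the average of the logarithms by \(\log(1-\bar p+\bar pe^\mu)\). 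Minimizing over \(\mu>0\) is then the usual Chernoff optimization. The optimizer is
\[
e^\mu=\frac{a(1-\bar p)}{\bar p(1-a)},
\]
and the bracket becomes exactly \(\D_{\mathrm{KL}}(a\|\bar p)\). This proves the upper tail for \(a<1\). The endpoint \(a=1\) follows by taking \(\mu\to\infty\), or directly from \eqref{eq:plan-finner} applied to indicators.

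For the lower tail, observe that \(\{1-\rY_j\}\) is again a read-\(k\) family, with success probabilities averaging to \(1-\bar p\). The event \(\sum_j\rY_j\le(\bar p-\varepsilon)r\) is the event \(\sum_j(1-\rY_j)\ge(1-\bar p+\varepsilon)r\). The upper-tail bound, together with the symmetry \(\D_{\mathrm{KL}}(1-u\|1-v)=\D_{\mathrm{KL}}(u\|v)\), then gives the second inequality.

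The degenerate cases \(\bar p\in\{0,1\}\) force every \(p_j\) to equal \(0\), or every \(p_j\) to equal \(1\). The indicators are then almost surely constant, and both tail statements are trivial, as noted in the statement.
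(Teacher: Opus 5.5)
Your argument is correct and complete. It necessarily takes a different route from the paper, because the paper gives no proof of this lemma: it quotes it from Gavinsky--Lovett--Saks--Srinivasan and uses it only through Corollary~\ref{cor:read-k-chernoff-bound-sum}.

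The steps you flag as delicate all hold up:
\begin{itemize}
\item In the induction, H\"older in $\rX_m$ with at most $k$ exponents, all equal to $k$, is valid on a probability space.
\item Each $\tilde g_j$ depends only on $\{\rX_i\}_{i\in P_j\setminus\{m\}}$, so the read-$k$ property is inherited.
\item Tonelli gives $\E[\tilde g_j^k]=\E[g_j^k]$.
\item Concavity of $p\mapsto\log(1-p+pe^{\mu})$ is what correctly converts the heterogeneous $p_j$ into $\bar p$.
\item The optimizer $e^{\mu}=a(1-\bar p)/(\bar p(1-a))$ exceeds $1$ exactly because $a>\bar p$.
\item The complement family, together with $\D_{\mathrm{KL}}(1-u\|1-v)=\D_{\mathrm{KL}}(u\|v)$, covers the whole lower-tail range $0<\varepsilon\le\bar p$.
\end{itemize}

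As for what each choice buys: citing keeps the appendix short. Your proof makes the paper self-contained, and it actually establishes something stronger than the stated lemma, namely the exponential-moment bound $\E\exp(\lambda\sum_j\rY_j)\le\prod_j(\E\exp(k\lambda\rY_j))^{1/k}$ for every $\lambda>0$.

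The other standard proof of the read-$k$ bound is entropy-based. It takes the relative entropy of the law conditioned on the tail event with respect to the product law, then uses Shearer-type subadditivity over the read sets, data processing, and joint convexity of $\D_{\mathrm{KL}}$. That route reaches the exponent $\frac rk\D_{\mathrm{KL}}(\bar p+\varepsilon\|\bar p)$ without a tilt optimization.

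Your moment bound is more flexible downstream. Finner's inequality needs only nonnegativity, so it applies verbatim to $[0,1]$-valued read-$k$ functions. Applying it to $g_j=e^{\pm\lambda(\rY_j-p_j)}$, together with Hoeffding's lemma and $\lambda=4s/(kr)$, gives the centered bound $2\exp(-2s^2/(kr))$ of Corollary~\ref{cor:read-k-chernoff-bound-sum} immediately, with no appeal to Pinsker's inequality.
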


    \begin{corollary}[Centered concentration for a read-\(k\) sum
    {\cite{dumitriu2026majority}}]
    \label{cor:read-k-chernoff-bound-sum}
    Let \(\{\rY_j\}_{j\in[r]}\) be a read-\(k\) family of indicator random
    variables, and set \(\rS\coloneqq\sum_{j=1}^r\rY_j\). Then, for every
    \(s>0\),
    \begin{align}
    \P\left(\left|\rS-\E \rS\right|\ge s\right)
    \le
    2\exp\left(-\frac{2s^2}{kr}\right).
    \label{eq:read-k-centered-concentration}
    \end{align}
    \end{corollary}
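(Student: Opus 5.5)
The plan is to deduce the centered bound from the two one-sided tails of Lemma~\ref{lem:read-k}, converting each Kullback--Leibler exponent into a quadratic one through Pinsker's inequality. We may assume \(0<\bar p<1\). Otherwise every indicator is almost surely constant, so \(\rS=\E\rS\) almost surely and the left-hand side vanishes for every \(s>0\).

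\textbf{Setting up the tails.} Since \(\E\rS=\sum_j p_j=\bar p r\), we have
\[
\{\rS-\E\rS\ge s\}=\{\rS\ge(\bar p+\varepsilon)r\},
\qquad
\varepsilon\coloneqq s/r.
\]
If \(\varepsilon>1-\bar p\), then \((\bar p+\varepsilon)r>r\ge\rS\), so this upper-tail event is empty. Otherwise \(0<\varepsilon\le1-\bar p\), and the first inequality of Lemma~\ref{lem:read-k} bounds its probability by
\[
\exp\left(-\tfrac rk\D_{\mathrm{KL}}(\bar p+\varepsilon\|\bar p)\right).
\]
The lower tail is symmetric. If \(\varepsilon>\bar p\), the event \(\{\rS\le(\bar p-\varepsilon)r\}\) is empty because \(\rS\ge0\). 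Otherwise the second inequality of Lemma~\ref{lem:read-k} applies with \(\D_{\mathrm{KL}}(\bar p-\varepsilon\|\bar p)\).

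\textbf{Pinsker step.} The key input is the Bernoulli form of Pinsker's inequality,
\[
\D_{\mathrm{KL}}(u\|v)\ge2(u-v)^2,
\qquad u,v\in[0,1],
\]
with the convention \(0\log0=0\) at the endpoints. It can be quoted directly. Alternatively, fix \(u\), set \(g(v)=\D_{\mathrm{KL}}(u\|v)-2(u-v)^2\), and note that \(g(u)=0\) and
\[
g'(v)=(v-u)\left(\frac1{v(1-v)}-4\right).
\]
Since \(v(1-v)\le1/4\), the function \(g\) is minimized at \(v=u\), so \(g\ge0\). Applying this with \(|u-v|=\varepsilon=s/r\) gives, for each tail,
\[
\exp\left(-\frac rk\cdot2\frac{s^2}{r^2}\right)
=\exp\left(-\frac{2s^2}{kr}\right).
\]

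\textbf{Conclusion.} A union bound over the two tails gives \eqref{eq:read-k-centered-concentration}. The only step requiring care is the Pinsker bound, since it is the sole place where the KL exponent is converted into the quadratic one. The remaining work is bookkeeping: checking the admissible ranges of \(\varepsilon\) and handling the endpoint cases \(\varepsilon\) outside those ranges, where the events are empty.
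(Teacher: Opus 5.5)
Your proof is correct and complete. This includes the bookkeeping for the degenerate case $\bar p\in\{0,1\}$ and for values of $s/r$ outside the admissible ranges, where the tail events are empty. The paper states this corollary without proof, citing the companion work; the intended and standard derivation is exactly yours: the two one-sided tails of Lemma~\ref{lem:read-k}, the Bernoulli Pinsker bound on the Kullback--Leibler exponent, and a union bound.
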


    \begin{lemma}[Read-\(2\) concentration for one-step red camp and advantage
    {\cite{dumitriu2026majority}}]
    \label{lem:camp-size-read-two-concentration}
    Fix \(t\ge0\) and condition on \(\rvy_t\). Then, for every \(u>0\),
    \begin{subequations}
    \begin{align}
    \P\left(
    \left||\gR_{t+1}|-\E[|\gR_{t+1}|\mid\rvy_t]\right|\ge u
    \,\middle|\,\rvy_t
    \right)
    &\le2\exp(-u^2/N),
    \label{eq:Rtplus-one-read-two-concentration-er}\\
    \P\left(
    \left|
    \Delta_{t+1}
    -
    \E[\Delta_{t+1}\mid\rvy_t]
    \right|
    \ge u
    \,\middle|\,
    \rvy_t
    \right)
    &\le
    2\exp\left(-\frac{u^2}{4N}\right).
    \label{eq:delta-read-two-concentration-er}
    \end{align}
    \end{subequations}
    \end{lemma}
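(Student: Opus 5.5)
We prove both bounds through a read-\(2\) family built from the update graph, then feed it into Corollary~\ref{cor:read-k-chernoff-bound-sum}. Condition on \(\rvy_t\); the only randomness is then the edge indicators \(\{\xi_e\}_{e\in\binom{[N]}{2}}\) of \(\gG_{t+1}\), which are independent \(\Ber(p)\) variables.

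For each vertex \(v\in\gV\), let \(\rY_v\) be the indicator that \(v\) is red at time \(t+1\). By the update rule \eqref{eqn:opinion-update}, with \(\rvy_t\) fixed, \(\rY_v\) is a Boolean function of the edges incident to \(v\) only, namely \(\{\xi_{\{v,w\}}\}_{w\neq v}\). It equals \(\indi{\rD_t^{\gR}(v)\le0}\) for \(v\in\gR_t\) and \(\indi{\rD_t^{\gB}(v)>0}\) for \(v\in\gB_t\). Each edge \(\{v,w\}\) is read by exactly two of these functions, those of \(v\) and of \(w\). Hence \(\{\rY_v\}_{v\in\gV}\) is a read-\(2\) family with \(r=N\), and \(|\gR_{t+1}|=\sum_v\rY_v\).

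Applying \eqref{eq:read-k-centered-concentration} with \(k=2\) and \(r=N\) gives
\[
\P\left(\left||\gR_{t+1}|-\E[|\gR_{t+1}|\mid\rvy_t]\right|\ge u\,\middle|\,\rvy_t\right)\le 2\exp\left(-\frac{2u^2}{2N}\right)=2\exp(-u^2/N),
\]
which is \eqref{eq:Rtplus-one-read-two-concentration-er}.

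For the advantage, we use the identity \(\Delta_{t+1}=N-2|\gR_{t+1}|\). It gives \(\Delta_{t+1}-\E[\Delta_{t+1}\mid\rvy_t]=-2(|\gR_{t+1}|-\E[|\gR_{t+1}|\mid\rvy_t])\), so the event in \eqref{eq:delta-read-two-concentration-er} is the red-camp deviation event at level \(u/2\). Substituting \(u/2\) into the first bound yields \(2\exp(-u^2/(4N))\).

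The only step needing care is the read-\(2\) verification. It relies on the graph being resampled independently of \(\rvy_t\), so that conditioning leaves the edges independent and the \(f_v\) deterministic. The tie rule is also harmless here: it only uses \(\rvy_t(v)\), which is fixed, so each \(\rY_v\) still depends on nothing beyond \(v\)'s incident edges. If the corollary were not available, we would first derive \eqref{eq:read-k-centered-concentration} from Lemma~\ref{lem:read-k} via Pinsker's inequality, \(\D_{\mathrm{KL}}(\bar p\pm\varepsilon\|\bar p)\ge2\varepsilon^2\), with \(\varepsilon=s/r\).
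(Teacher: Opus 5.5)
Your proposal is correct and follows essentially the same argument as the paper. Both proofs show that the next-color indicators form a read-\(2\) family over the independent edge indicators, apply Corollary~\ref{cor:read-k-chernoff-bound-sum} with \(k=2\) and \(r=N\), and then obtain the advantage bound from \(\Delta_{t+1}=N-2|\gR_{t+1}|\) at threshold \(u/2\).
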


    \begin{proof}
    Condition on \(\rvy_t\). For each vertex, the indicator that it is red at
    time \(t+1\) is a Boolean function of the independent edge indicators
    incident to that vertex. Each edge indicator is therefore read by at most
    its two endpoints, so these \(N\) next-color indicators form a read-\(2\)
    family. Applying Corollary~\ref{cor:read-k-chernoff-bound-sum} with
    \(r=N\) and \(k=2\) to their sum \(|\gR_{t+1}|\) gives
    \eqref{eq:Rtplus-one-read-two-concentration-er}. Finally,
    \(\Delta_{t+1}=N-2|\gR_{t+1}|\), so
    \[
    \left|\Delta_{t+1}-\E[\Delta_{t+1}\mid\rvy_t]\right|
    =
    2\left||\gR_{t+1}|-\E[|\gR_{t+1}|\mid\rvy_t]\right|.
    \]
    Applying the first bound with threshold \(u/2\) proves
    \eqref{eq:delta-read-two-concentration-er}.
    \end{proof}

\section{Flip-Probability Estimates For Sparse Binomial Differences}
\label{app:er-one-step-auxiliary}

Under Assumption~\ref{ass:sparse-er}, let
\begin{align}
\rD\coloneqq \rX-\rY,
\qquad
\rX\sim\Bin(s,p),
\qquad
\rY\sim\Bin(r,p),
\label{eqn:er-sparse-binomial-difference}
\end{align}
where \(\rX\) and \(\rY\) are independent and
\(r=r(N),s=s(N)\) are nonnegative integers at most \(N\). Set
\begin{align}
A_N\coloneqq\frac{br}{N},
\qquad
B_N\coloneqq\frac{bs}{N},
\qquad
\mu_N\coloneqq p(s-r),
\qquad
\sigma_N^2\coloneqq p(1-p)(s+r).
\label{eqn:er-sparse-binomial-parameters}
\end{align}
The results below are obtained by specializing
{\cite[Lemma~A.1, Corollary~A.2, Lemma~A.3, and
Corollary~A.4]{dumitriu2026majority}}: the two edge probabilities there are
both set equal to \(p\), and the two logarithmic density constants are both
set equal to \(b\). We state the resulting ER estimates without repeating
their proofs.

\subsection{Gaussian window: Berry--Esseen approximation}

The next two statements are the ER specializations of
{\cite[Lemma~A.1 and Corollary~A.2]{dumitriu2026majority}}.

\begin{lemma}[Gaussian approximation for sparse binomial differences]
\label{lem:gaussian-sparse-binomial-difference}
Suppose that \(A_N+B_N\ge c_0\) for some fixed constant \(c_0>0\). Then,
uniformly over all \(r,s\) satisfying this condition,
\begin{align}
\sup_{x\in\R}
\left|
\P\left(\frac{\rD-\mu_N}{\sigma_N}\le x\right)-\Phi(x)
\right|
\le
\frac{C}{\sqrt{\log N}},
\label{eqn:er-sparse-binomial-berry-esseen}
\end{align}
where \(C=C(b,c_0)>0\).
\end{lemma}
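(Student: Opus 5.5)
Since \(\rD=\rX-\rY\) is a sum of \(s+r\) independent, bounded, centered-after-shift summands, the plan is a direct application of Lemma~\ref{lem:berry-esseen}. Write
\[
\rD-\mu_N=\sum_{i=1}^{s}(\xi_i-p)-\sum_{j=1}^{r}(\zeta_j-p),
\]
with \(\xi_i,\zeta_j\) i.i.d.\ \(\Ber(p)\). Each summand has mean zero and variance \(p(1-p)\), so the total variance is \(\sigma_N^2\).

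For the third moments, the summands satisfy
\[
\E|\xi-p|^3=p(1-p)\bigl((1-p)^2+p^2\bigr)\le p(1-p).
\]
Hence the Berry--Esseen ratio is at most
\[
\gC_{\mathrm{BE}}\,\frac{(s+r)p(1-p)}{\bigl((s+r)p(1-p)\bigr)^{3/2}}=\frac{\gC_{\mathrm{BE}}}{\sigma_N}.
\]

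It remains to bound \(\sigma_N\) from below uniformly. By \eqref{eqn:er-sparse-binomial-parameters},
\[
\sigma_N^2=(1-p)\,\frac{b(s+r)}{N}\cdot\frac{N p}{b}=(1-p)(A_N+B_N)\log N\ge \tfrac12 c_0\log N
\]
for \(N\) large, since \(p\to0\). Therefore the supremum in \eqref{eqn:er-sparse-binomial-berry-esseen} is at most \(\gC_{\mathrm{BE}}\sqrt{2/c_0}\,(\log N)^{-1/2}\). The constant depends only on \(c_0\), and on \(b\) only through the threshold on \(N\), so it is uniform in \(r,s\).

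The only point requiring care is the degenerate case where one of \(r,s\) is zero. It is harmless, since Lemma~\ref{lem:berry-esseen} applies to any collection of independent summands. The hypothesis \(A_N+B_N\ge c_0\) guarantees \(s+r\ge1\), and in fact \(s+r\) is of order \(N\).
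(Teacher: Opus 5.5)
Your proof is correct and is essentially the intended argument. The paper does not reprove this lemma; it imports it from its companion work, and Section~\ref{sec:one-step-evolution-advantage} notes that a direct application of Lemma~\ref{lem:berry-esseen} gives exactly this \(O((\log N)^{-1/2})\) error, through the third-moment bound \(\E|\xi-p|^3\le p(1-p)\) and the lower bound \(\sigma_N^2=(1-p)(A_N+B_N)\log N\ge(1-p)c_0\log N\) that you verify.
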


\begin{corollary}[Gaussian estimates for one-vertex flip probabilities]
\label{cor:gaussian-flip-probabilities}
Recall \(p_t^{\gR},q_t^{\gR}\) from \eqref{eqn:p-q-R},
\(p_t^{\gB},q_t^{\gB}\) from \eqref{eqn:p-q-B},
\(x_t^{\gR},x_t^{\gB}\) from
\eqref{eqn:standardized-degree-difference-definitions}.
Fix \(t\ge0\), and condition on a configuration \(\rvy_t\) satisfying
\eqref{eqn:linear-camp-condition} for some fixed constant \(c_0>0\).
Uniformly over all such configurations,
\begin{subequations}
\begin{align}
p_t^{\gR}
&=
\Phi(x_t^{\gR})+O(1/\sqrt{\log N}),
\label{eq:gaussian-pR}\\
q_t^{\gR}
&=
\Phi(-x_t^{\gR})+O(1/\sqrt{\log N}),
\label{eq:gaussian-qR}\\
p_t^{\gB}
&=
\Phi(x_t^{\gB})+O(1/\sqrt{\log N}),
\label{eq:gaussian-pB}\\
q_t^{\gB}
&=
\Phi(-x_t^{\gB})+O(1/\sqrt{\log N}).
\label{eq:gaussian-qB}
\end{align}
\end{subequations}
The constants in the \(O(\cdot)\) terms depend only on \(b\) and \(c_0\).
\end{corollary}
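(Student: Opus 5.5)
The plan is to read off all four estimates directly from Lemma~\ref{lem:gaussian-sparse-binomial-difference}, applied to the sparse binomial differences in \eqref{eqn:D-Rt-distribution} and \eqref{eqn:D-Bt-distribution}. Condition on \(\rvy_t\) satisfying \eqref{eqn:linear-camp-condition}; in particular both camps are nonempty, so the flip probabilities, means and variances are all defined.

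\textbf{Red camp.} For \(p_t^{\gR}\) and \(q_t^{\gR}\), realize \(\rD_t^{\gR}\) as \(\rD=\rX-\rY\) in the notation of \eqref{eqn:er-sparse-binomial-difference}, with \(s=|\gB_t|\) and \(r=|\gR_t|-1\).
\begin{itemize}
\item \emph{Parameters.} Then \(\mu_N=p(|\gB_t|-|\gR_t|+1)=m_t^{\gR}\) and \(\sigma_N^2=(N-1)p(1-p)=v_t^{\gR}\), by Lemma~\ref{lem:expected-variance-degree-differences}.
\item \emph{Hypothesis.} We have \(A_N+B_N=b(N-1)/N\ge b/2\), so the lemma applies with a constant depending only on \(b\).
\item \emph{Conclusion for \(q_t^{\gR}\).} Evaluating \eqref{eqn:er-sparse-binomial-berry-esseen} at \(x=-m_t^{\gR}/\sqrt{v_t^{\gR}}=-x_t^{\gR}\) gives
\[
q_t^{\gR}=\P(\rD\le0)=\P\left(\frac{\rD-\mu_N}{\sigma_N}\le -x_t^{\gR}\right)=\Phi(-x_t^{\gR})+O\left(\frac{1}{\sqrt{\log N}}\right).
\]
This is \eqref{eq:gaussian-qR}.
\item \emph{Conclusion for \(p_t^{\gR}\).} Since \(p_t^{\gR}=1-q_t^{\gR}\) and \(1-\Phi(-x)=\Phi(x)\), we obtain \eqref{eq:gaussian-pR}. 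No continuity correction is needed here, because the error \(O(1/\sqrt{\log N})\) already absorbs it.
\end{itemize}

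\textbf{Blue camp.} The blue estimates follow identically, taking \(s=|\gR_t|\) and \(r=|\gB_t|-1\). Then \(\mu_N=p(1-\Delta_t)=m_t^{\gB}\) and \(\sigma_N^2=v_t^{\gB}\), and the same evaluation at \(-x_t^{\gB}\) gives \eqref{eq:gaussian-qB}. Complementation then gives \eqref{eq:gaussian-pB}.

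\textbf{Uniformity.} The Berry--Esseen-type bound in Lemma~\ref{lem:gaussian-sparse-binomial-difference} is uniform in \(x\in\R\) and in all \((r,s)\) obeying the density condition. The constant therefore depends only on \(b\) (and trivially on \(c_0\)), uniformly over configurations. No step here is a genuine obstacle.

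The only point needing care is the exact identification of \(\mu_N,\sigma_N^2\) with \(m_t,v_t\), which accounts for the \(-1\) excluding the vertex itself. This matters because the standardized argument must be exactly \(\mp x_t\), not merely asymptotically equal to it. Any discrepancy would otherwise have to be controlled through the Lipschitz continuity of \(\Phi\).
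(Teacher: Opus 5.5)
Your proposal is correct and follows essentially the paper's route: the paper gives no separate proof and obtains this corollary by specializing the companion paper's Corollary~A.2. That amounts to evaluating the uniform Berry--Esseen bound of Lemma~\ref{lem:gaussian-sparse-binomial-difference} at \(x=-x_t^{\gR}\) (respectively \(-x_t^{\gB}\)) after identifying \(\mu_N,\sigma_N^2\) exactly with \(m_t,v_t\), then complementing, just as you do. Your observation that \(A_N+B_N=b(N-1)/N\), so the constant depends only on \(b\), is a correct minor sharpening of the stated \((b,c_0)\)-dependence.
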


\subsection{Large-deviation tail: rate-function exponents}

Recall \(\ReLU(z)=\max\{0,z\}\), and define
\begin{align}
I(x,y)
\coloneqq
\left(\ReLU\left(\sqrt{x}-\sqrt{y}\right)\right)^2.
\label{eqn:LDP-rate-function}
\end{align}
The next two statements are the ER specializations of
{\cite[Lemma~A.3 and Corollary~A.4]{dumitriu2026majority}}.

\begin{lemma}[Sparse-binomial large deviations]
\label{lem:binomial-difference-ld-general}
Suppose that \(A_N,B_N\ge c_0\) for some fixed constant \(c_0>0\). Then,
uniformly over all \(r,s\) satisfying this condition,
\begin{subequations}
\begin{align}
\P(\rD>0)
&=
N^{-I(A_N,B_N)+o(1)},
\label{eqn:binomial-difference-upper-tail}\\
\P(\rD\le0)
&=
N^{-I(B_N,A_N)+o(1)}.
\label{eqn:binomial-difference-lower-tail}
\end{align}
\end{subequations}
\end{lemma}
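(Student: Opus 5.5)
The plan is a Chernoff bound for the upper estimates and an explicit point-mass computation (Stirling) for the lower estimates. Both are carried out uniformly over $A_N,B_N\in[c_0,b]$.

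Write $L\coloneqq\log N$, so that $sp=B_NL$ and $rp=A_NL$. It suffices to treat \eqref{eqn:binomial-difference-upper-tail}, since \eqref{eqn:binomial-difference-lower-tail} follows by the same argument applied to $-\rD$ with the roles of $A_N$ and $B_N$ exchanged (the boundary term at $0$ changes the target event only by a single lattice point, which the lower-bound construction handles the same way).

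\textbf{Upper bound.} For $\theta\ge0$, Markov's inequality and $\log(1-p+pe^{\pm\theta})\le p(e^{\pm\theta}-1)$ give
\[
\P(\rD>0)\le\E e^{\theta\rD}\le\exp\bigl\{L\bigl(B_N(e^{\theta}-1)+A_N(e^{-\theta}-1)\bigr)\bigr\}.
\]
If $A_N>B_N$, choose $e^{\theta}=\sqrt{A_N/B_N}$. The exponent then equals $-L(\sqrt{A_N}-\sqrt{B_N})^2=-L\,I(A_N,B_N)$ exactly. If $A_N\le B_N$, the choice $\theta=0$ gives the trivial bound $1=N^{-I}$. This bound has no error term at all, so uniformity is automatic.

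\textbf{Lower bound, case $B_N\ge A_N$.} Here $I=0$, so we only need $\P(\rD>0)\ge N^{-o(1)}$. The mean is $\mu_N=p(s-r)\ge0$, hence $\P(\rD>0)\ge\P(\rD-\mu_N>1)$. By Lemma~\ref{lem:gaussian-sparse-binomial-difference} (valid since $A_N+B_N\ge 2c_0$), this is at least $\Phi(-1/\sigma_N)-C/\sqrt{\log N}\ge1/4$ for large $N$, uniformly.

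\textbf{Lower bound, case $A_N>B_N$.} Set $k\coloneqq\lfloor\sqrt{A_NB_N}\,L\rfloor$ and use
\[
\P(\rD>0)\ge\P(\rX=k+1)\,\P(\rY=k).
\]
Since $k=O(L)$ and $n\in\{r,s\}\le N$, the binomial point mass compares to the Poisson one, $e^{-\lambda}\lambda^j/j!$ with $\lambda=np$ and $j\in\{k,k+1\}$, up to a factor $\exp\{O(k^2/n+np^2+kp)\}=1+o(1)$. This holds because $n\ge c_0N/b$ is linear. Stirling then gives
\[
\log\P(\mathrm{Pois}(\lambda)=j)=j\log(\lambda/j)+j-\lambda+O(\log L).
\]
Summing the two contributions with $\lambda_X=B_NL$, $\lambda_Y=A_NL$ and $j\approx\sqrt{A_NB_N}L$, the logarithmic terms cancel, because $\log\sqrt{B_N/A_N}+\log\sqrt{A_N/B_N}=0$. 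What remains is $2\sqrt{A_NB_N}L-(A_N+B_N)L=-I(A_N,B_N)L$. The shift from $k$ to $k+1$ and the rounding in $k$ cost only $O(\log L)=o(L)$, since $\log(\lambda_X/k)$ is bounded uniformly. This yields $\P(\rD>0)\ge N^{-I(A_N,B_N)-o(1)}$.

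\textbf{Main obstacle.} The only delicate point is the uniformity of the $o(1)$ in the local (Stirling plus Poisson-comparison) estimate. I will check that every error term is bounded by constants depending only on $(b,c_0)$ times $\log L/L$ or $L^2/N$. This holds because $A_N,B_N$ range over the compact set $[c_0,b]$, so $\log(\lambda/j)$ and $j/L$ stay bounded uniformly, and $n\ge c_0N/b$ keeps $k^2/n$ small.
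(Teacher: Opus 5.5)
Your proof is correct, and it takes a different route from the paper. The paper gives no argument for this lemma: it imports it by specializing \cite[Lemma~A.3]{dumitriu2026majority}, setting both edge probabilities equal to $p$ and both density constants equal to $b$.

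You instead give a self-contained derivation in three parts:
\begin{enumerate}
\item The Chernoff bound with the optimal tilt $e^{\theta}=\sqrt{A_N/B_N}$ gives the upper estimate $N^{-I(A_N,B_N)}$ with no error term.
\item The single atom $\{\rX=k+1,\ \rY=k\}$ with $k=\lfloor\sqrt{A_NB_N}\log N\rfloor$ gives the matching lower estimate. You evaluate it through the point-mass comparison factor $\exp\{O(k^2/n+np^2+kp)\}$, which is $1+o(1)$ since $n\ge c_0N/b$, followed by Stirling. The $j\log(\lambda/j)$ terms cancel, and the loss in the exponent is only $O(\log\log N/\log N)$.
\item Lemma~\ref{lem:gaussian-sparse-binomial-difference} covers the regime $I=0$.
\end{enumerate}
The reduction of the second estimate to the first via $-\rD$ is also sound. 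Markov's inequality applies equally to $\{-\rD\ge0\}$, and $\P(\rD\le0)\ge\P(-\rD>0)$.

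What your route buys is an explicit rate and visibly uniform control over $(A_N,B_N)\in[c_0,b]^2$. This includes the transition $A_N\downarrow B_N$, where the additive loss in the exponent keeps the two bounds matched. The same argument would also carry over unchanged to the two-density setting of the companion paper. The citation buys brevity.

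If you want your argument independent of the imported Berry--Esseen lemma as well, the $I=0$ case can be done by hand. For $s\ge r$, write $\rX=\rX'+\rX''$ with $\rX'\stackrel{d}{=}\rY$ independent of $\rY$. Then $\P(\rD>0)\ge\P(\rX'>\rY)=\tfrac12\bigl(1-\P(\rX'=\rY)\bigr)\ge\tfrac12-\max_j\P(\rY=j)$, which tends to $\tfrac12$ uniformly because $rp\ge c_0\log N\to\infty$.
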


\begin{corollary}[Large-deviation estimates for one-vertex flip probabilities]
\label{cor:flip-probability-ld}
Recall \(b_t^{\gR},b_t^{\gB}\) from
\eqref{eqn:scaled-camp-sizes}. Fix \(t\ge0\), condition on \(\rvy_t\), and
suppose that \eqref{eqn:linear-camp-condition} holds for some fixed
constant \(c_0>0\). Then, uniformly over all such configurations,
\begin{subequations}
\begin{align}
p_t^{\gB}
&=
N^{-I(b_t^{\gB},b_t^{\gR})+o(1)},
\label{eq:blue-to-red-flip-ld}\\
q_t^{\gR}
&=
N^{-I(b_t^{\gB},b_t^{\gR})+o(1)},
\label{eq:red-stays-red-ld}\\
p_t^{\gR}
&=
N^{-I(b_t^{\gR},b_t^{\gB})+o(1)},
\label{eq:red-to-blue-flip-ld}\\
q_t^{\gB}
&=
N^{-I(b_t^{\gR},b_t^{\gB})+o(1)}.
\label{eq:blue-stays-blue-ld}
\end{align}
\end{subequations}
The missing self-edge changes the relevant camp size by one and hence its
scaled parameter by \(b/N\), which is absorbed by the uniform \(o(1)\) terms.
\end{corollary}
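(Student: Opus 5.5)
The plan is to obtain each of the four estimates by applying Lemma~\ref{lem:binomial-difference-ld-general} to the exact binomial-difference representations \eqref{eqn:D-Bt-distribution}--\eqref{eqn:D-Rt-distribution}. The one-vertex flip probabilities and those representations depend on \(\rvy_t\) only through the camp sizes. So it suffices to match the parameters \(r,s\) of \eqref{eqn:er-sparse-binomial-difference} to the camp sizes in each of the four cases. We then absorb the self-edge shift into the \(o(1)\) of the exponent using continuity of the rate function \(I\).

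\textbf{Matching the parameters.}

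\emph{Blue flips to red.} By \eqref{eqn:D-Bt-distribution}, \(p_t^{\gB}=\P(\rD>0)\) with \(s=|\gR_t|\) and \(r=|\gB_t|-1\). Hence \(A_N=b(|\gB_t|-1)/N=b_t^{\gB}-b/N\) and \(B_N=b_t^{\gR}\). Then \eqref{eqn:binomial-difference-upper-tail} gives \(p_t^{\gB}=N^{-I(b_t^{\gB}-b/N,\,b_t^{\gR})+o(1)}\).

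\emph{Red stays red.} By \eqref{eqn:D-Rt-distribution}, \(q_t^{\gR}=\P(\rD\le0)\) with \(s=|\gB_t|\) and \(r=|\gR_t|-1\). Hence \(A_N=b_t^{\gR}-b/N\) and \(B_N=b_t^{\gB}\). Then \eqref{eqn:binomial-difference-lower-tail} gives exponent \(I(B_N,A_N)=I(b_t^{\gB},\,b_t^{\gR}-b/N)\).

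\emph{Red flips to blue and blue stays blue.} These are the same two representations with the complementary tail. They yield the exponents \(I(b_t^{\gR}-b/N,\,b_t^{\gB})\) and \(I(b_t^{\gR},\,b_t^{\gB}-b/N)\).

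\textbf{Checking the hypothesis of Lemma~\ref{lem:binomial-difference-ld-general}.} Under \eqref{eqn:linear-camp-condition}, each of \(A_N,B_N\) is at least \(bc_0-b/N\ge bc_0/2\) for large \(N\). Each is also at most \(b\). So the lemma applies with constant \(bc_0/2\), and its \(o(1)\) is uniform over all admissible configurations.

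\textbf{Removing the self-edge shift.} On the compact interval \([bc_0/2,b]\), the map \(x\mapsto\sqrt x\) is Lipschitz with constant \((2bc_0)^{-1/2}\). Also, \(z\mapsto\ReLU(z)^2\) is Lipschitz on bounded sets. Hence \(I\) is uniformly Lipschitz on \([bc_0/2,b]^2\). A shift of \(b/N\) in one argument therefore changes the exponent by \(O(1/N)=o(1)\), uniformly. This replaces each shifted exponent by \(I(b_t^{\gB},b_t^{\gR})\) or \(I(b_t^{\gR},b_t^{\gB})\) as stated.

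\textbf{Main obstacle.} There is little difficulty here, since the corollary is a direct specialization. The only care needed is bookkeeping. First, the argument order of \(I\) must be matched correctly to the upper versus lower tail. Second, the missing self-edge must sit in the \emph{subtracted} binomial in every case. Uniformity of the \(o(1)\) is inherited from the uniformity in Lemma~\ref{lem:binomial-difference-ld-general} together with the uniform Lipschitz bound above.
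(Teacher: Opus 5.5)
Your proposal is correct and takes the route the paper indicates. You apply Lemma~\ref{lem:binomial-difference-ld-general} to the representations \eqref{eqn:D-Bt-distribution}--\eqref{eqn:D-Rt-distribution}, matching the tail and argument order correctly in all four cases, and then absorb the $b/N$ self-edge shift into the uniform $o(1)$ using Lipschitz continuity of $I$ on $[bc_0/2,b]^2$. The paper only cites the companion-paper specialization and adds the same self-edge remark, so your write-up makes that step explicit.
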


\end{document}